\numberwithin{equation}{section}
\newtheoremstyle{mystyle}
{}
{}
{\normalfont}
{ }
{\bfseries}
{}
{10pt}
{ }
\theoremstyle{mystyle}
\newtheorem{theorem}{Theorem}
\newtheorem{lemma}{Lemma}
\newtheorem{remark}{Remark}
\def\Diag{\mathop{\rm Diag}\nolimits}
\def\tr{\mathop{\rm tr}\nolimits}
\def\vec{\mathop{\rm vec}\nolimits}
\def\vech{\mathop{\rm vech}\nolimits}
\def\det{\mathop{\rm det}\nolimits}
\def\rank{\mathop{\rm rank}\nolimits}
\newcommand{\dd}{\mathrm d}
\title[QAIC of SEM for diffusion processes]{Quasi-Akaike information criterion of structural equation modeling with latent variables for diffusion processes}
\author[S. Kusano]{Shogo Kusano $^{1}$}
\author[M. Uchida]{Masayuki Uchida $^{1,2}$}
\address{$^{1}$Graduate School of Engineering Science, Osaka University}
\address{$^{2}$Center for Mathematical Modeling and Data Science (MMDS), Osaka University and JST CREST}
\begin{document}
\begin{abstract}
\fontsize{8pt}{10pt}\selectfont
We consider a model selection problem for structural equation modeling (SEM) with latent variables for diffusion processes based on high-frequency data. First, we propose the quasi-Akaike information criterion of the SEM and study the asymptotic properties. Next, we consider the situation where the set of competing models includes some misspecified parametric models. It is shown that the probability of choosing the misspecified models converges to zero. Furthermore, examples and simulation results are given.
\end{abstract}
\keywords{Structural equation modeling; Quasi-Akaike information criterion;
Quasi-likelihood analysis; High-frequency data; Stochastic differential equation.}
\maketitle

\section{Introduction}
\fontsize{10pt}{16pt}\selectfont
{\setlength{\abovedisplayskip}{12pt}
\setlength{\belowdisplayskip}{12pt}
We consider a model selection problem for structural equation modeling (SEM) with latent variables for diffusion processes. First, we define the true model of the SEM. 
The stochastic processes $\mathbb{X}_{1,0,t}$ and $\mathbb{X}_{2,0,t}$
are defined by the factor models as follows:
\begin{align}
    \mathbb{X}_{1,0,t}&={\bf{\Lambda}}_{x_1,0}\xi_{0,t}+\delta_{0,t}\label{X0},\\
    \mathbb{X}_{2,0,t}&={\bf{\Lambda}}_{x_2,0}\eta_{0,t}+\varepsilon_{0,t},
\end{align}
where $\{\mathbb{X}_{1,0,t}\}_{t\geq 0}$ 
and $\{\mathbb{X}_{2,0,t}\}_{t\geq 0}$ are
$p_1$ and $p_2$-dimensional observable vector processes,
$\{\xi_{0,t}\}_{t\geq 0}$ and  $\{\eta_{0,t}\}_{t\geq 0}$ are
$k_{1}$ and  $k_{2}$-dimensional latent common factor vector processes, 
$\{\delta_{0,t}\}_{t\geq 0}$ and $\{\varepsilon_{0,t}\}_{t\geq 0}$ are $p_{1}$ and $p_{2}$-dimensional latent unique factor vector processes, respectively.
${\bf{\Lambda}}_{x_1,0}\in\mathbb{R}^{p_1\times k_{1}}$ 
and ${\bf{\Lambda}}_{x_2,0}\in\mathbb{R}^{p_2\times k_{2}}$ are constant loading matrices.
Both $p_1$ and $p_2$ are not zero, $p_1$,  $p_2$, $k_{1}$ and  $k_{2}$ are fixed, 
$k_{1}\leq p_1$ and $k_{2}\leq p_2$. 
Let $p=p_1+p_2$. 
Suppose that $\{\xi_{0,t}\}_{t\geq 0}$, $\{\delta_{0,t}\}_{t\geq 0}$ and 
$\{\varepsilon_{0,t}\}_{t\geq 0}$  satisfy the following  stochastic differential equations:
\begin{align}
    \quad\dd \xi_{0,t}&=B_{1}(\xi_{0,t})\dd t+{\bf{S}}_{1,0}\dd W_{1,t}, \ \ 
    \xi_{0,0}=c_{1}, \\
    \quad\dd\delta_{0,t}&=B_{2}(\delta_{0,t})\dd t+{\bf{S}}_{2,0}\dd W_{2,t}, \ \ 
    \delta_{0,0}=c_{2},  \\
    \quad\dd\varepsilon_{0,t}&=B_{3}(\varepsilon_{0,t})\dd t+{\bf{S}}_{3,0}\dd W_{3,t}, \ \ 
     \varepsilon_{0,0}=c_{3},      
\end{align}
where $B_{1}:\mathbb{R}^{k_{1}}\rightarrow\mathbb{R}^{k_{1}}$, ${\bf{S}}_{1,0}\in\mathbb{R}^{k_{1}\times r_{1}}$, $c_{1}\in\mathbb{R}^{k_{1}}$,
$B_{2}:\mathbb{R}^{p_1}\rightarrow\mathbb{R}^{p_1}$, ${\bf{S}}_{2,0}\in\mathbb{R}^{p_1\times r_{2}}$, $c_{2}\in\mathbb{R}^{p_1}$,  
$B_{3}:\mathbb{R}^{p_2}\rightarrow\mathbb{R}^{p_2}$, ${\bf{S}}_{3,0}\in\mathbb{R}^{p_2\times r_{3}}$, $c_{3}\in\mathbb{R}^{p_2}$ 
and $W_{1,t}$, $W_{2,t}$ and $W_{3,t}$ are $r_{1}$, $r_{2}$ and $r_{3}$-dimensional standard Wiener processes, respectively. 
Moreover, we express the relationship between $\eta_{0,t}$ and  $\xi_{0,t}$ as follows:
\begin{align}
    \eta_{0,t}={\bf{B}}_0\eta_{0,t}+{\bf{\Gamma}}_0\xi_{0,t}+\zeta_{0,t},
\end{align}
where 
${\bf{B}}_0\in\mathbb{R}^{k_{2}\times k_{2}}$ is a constant loading matrix, whose diagonal elements are zero, and ${\bf{\Gamma}}_0\in\mathbb{R}^{k_{2}\times k_{1}}$ is a constant loading matrix. Define ${\bf{\Psi}}_0=\mathbb{I}_{k_2}-{\bf{B}}_0$, where $\mathbb{I}_{k_2}$ denotes the identity matrix of size $k_2$. We assume that ${\bf{\Lambda}}_{x_1,0}$ is a full column rank matrix and ${\bf{\Psi}}_0$ is non-singular.  
$\{\zeta_{0,t}\}_{t\geq 0}$ is a $k_{2}$-dimensional latent unique factor vector process
defined by the following stochastic differential equation: 
\begin{align}
    \quad\dd\zeta_{0,t}=B_{4}(\zeta_{0,t})\dd t+{\bf{S}}_{4,0}\dd W_{4,t},\ \ 
    \zeta_{0,0}=c_{4},\label{zeta0}
\end{align}
where $B_{4}:\mathbb{R}^{k_{2}}\rightarrow\mathbb{R}^{k_{2}}$, ${\bf{S}}_{4,0}\in\mathbb{R}^{k_{2}\times r_{4}}$, $c_{4}\in\mathbb{R}^{k_{2}}$ and $W_{4,t}$ is an $r_{4}$-dimensional standard Wiener process.
Set ${\bf{\Sigma}}_{\xi\xi,0}={\bf{S}}_{1,0}{\bf{S}}_{1,0}^{\top}$, ${\bf{\Sigma}}_{\delta\delta,0}={\bf{S}}_{2,0}{\bf{S}}_{2,0}^{\top}$, ${\bf{\Sigma}}_{\varepsilon\varepsilon,0}={\bf{S}}_{3,0}{\bf{S}}_{3,0}^{\top}$ and ${\bf{\Sigma}}_{\zeta\zeta,0}={\bf{S}}_{4,0}{\bf{S}}_{4,0}^{\top}$, where $\top$ denotes the transpose. It is supposed that ${\bf{\Sigma}}_{\delta\delta,0}$ and ${\bf{\Sigma}}_{\varepsilon\varepsilon,0}$ are positive definite matrices, 
and $W_{1,t}$, $W_{2,t}$, $W_{3,t}$ and $W_{4,t}$ are independent standard Wiener processes on a stochastic basis with usual conditions $(\Omega, \mathscr{F}, \{\mathscr{F}_t\}, {\bf{P}})$.
Let $\mathbb{X}_{0,t}=(\mathbb{X}_{1,0,t}^{\top},\mathbb{X}_{2,0,t}^{\top})^{\top}$. Set ${\bf{\Sigma}}_0$ as the variance of $\mathbb{X}_{0,t}$. If there is no misunderstanding, we simply write $\mathbb{X}_{0,t}$ as $\mathbb{X}_t$.
$\mathbb{X}_n=(\mathbb{X}_{t_{i}^n})_{0\leq i\leq n}=(\mathbb{X}_{0,t_{i}^n})_{0\leq i\leq n}$ are discrete observations, where $t_{i}^n=ih_n$, $h_n=\frac{T}{n}$, $T$ is fixed, and $p_1$, $p_2$, $k_{1}$ and $k_{2}$ are independent of $n$. We consider the situation where $h_n \longrightarrow 0$
as $n \longrightarrow \infty$.
We cannot estimate all the elements of ${\bf{\Lambda}}_{x_1,0}$, ${\bf{\Lambda}}_{x_2,0}$, ${\bf{\Gamma}}_0$, ${\bf{\Psi}}_0$,
${\bf{\Sigma}}_{\xi\xi,0}$, ${\bf{\Sigma}}_{\delta\delta,0}$, ${\bf{\Sigma}}_{\varepsilon\varepsilon,0}$ and ${\bf{\Sigma}}_{\zeta\zeta,0}$. Thus, some elements may be assumed to be zero to satisfy an identifiability condition; see, e.g., Everitt \cite{Everitt(1984)}. Note that these constraints and the number of factors $k_1$ and $k_2$ are determined from the theoretical viewpoint of each research field. 

A model selection problem among the following $M$ parametric models is considered.
We define the parametric model of Model $m\in\{1,\cdots,M\}$ as follows. Set $\theta_m\in\Theta_m\subset\mathbb{R}^{q_m}$ as the parameter of Model $m$, where $\Theta_m$ is a convex compact space. It is assumed that $\Theta_m$ has locally Lipschitz boundary; see, e.g., Adams and Fournier \cite{Adams(2003)}. 
The stochastic processes $\mathbb{X}^{\theta}_{1,m,t}$ and $\mathbb{X}^{\theta}_{2,m,t}$
are defined as the following factor models:
\begin{align}
    \mathbb{X}^{\theta}_{1,m,t}&={\bf{\Lambda}}^{\theta}_{x_1,m}\xi^{\theta}_{m,t}+\delta^{\theta}_{m,t}\label{Xm}, \\
    \mathbb{X}^{\theta}_{2,m,t}&={\bf{\Lambda}}^{\theta}_{x_2,m}\eta^{\theta}_{m,t}+\varepsilon^{\theta}_{m,t}\label{Ym}, 
\end{align}
where $\{\mathbb{X}^{\theta}_{1,m,t}\}_{t\geq 0}$ and $\{\mathbb{X}^{\theta}_{2,m,t}\}_{t\geq 0}$
are $p_1$ and $p_2$-dimensional observable vector processes,
$\{\xi^{\theta}_{m,t}\}_{t\geq 0}$ and $\{\eta^{\theta}_{m,t}\}_{t\geq 0}$
are $k_{1}$ and $k_{2}$-dimensional latent common factor vector processes, 
$\{\delta^{\theta}_{m,t}\}_{t\geq 0}$ and $\{\varepsilon^{\theta}_{m,t}\}_{t\geq 0}$
are $p_{1}$ and $p_{2}$-dimensional latent unique factor vector processes, respectively.
${\bf{\Lambda}}^{\theta}_{x_1,m}\in\mathbb{R}^{p_1\times k_{1}}$ and ${\bf{\Lambda}}^{\theta}_{x_2,m}\in\mathbb{R}^{p_2\times k_{2}}$
are constant loading matrices.
Assume that $\{\xi^{\theta}_{m,t}\}_{t\geq 0}$,  $\{\delta^{\theta}_{m,t}\}_{t\geq 0}$ and
$\{\varepsilon^{\theta}_{m,t}\}_{t\geq 0}$ satisfy the following  stochastic differential equations:
\begin{align}
    \quad\dd \xi^{\theta}_{m,t}&=B_{1}(\xi^{\theta}_{m,t})\dd t+{\bf{S}}^{\theta}_{1,m}\dd W_{1,t}, \ \ 
    \xi^{\theta}_{m,0}=c_{1},\label{xim}
    \\
    \quad\dd\delta^{\theta}_{m,t}&=B_{2}(\delta^{\theta}_{m,t})\dd t+{\bf{S}}^{\theta}_{2,m}\dd W_{2,t}, \ \ 
    \delta^{\theta}_{m,0}=c_{2},\label{deltam}
\\    
    \quad\dd\varepsilon^{\theta}_{m,t}&=B_{3}(\varepsilon^{\theta}_{m,t})\dd t+{\bf{S}}^{\theta}_{3,m}\dd W_{3,t}, \ \ 
    \varepsilon^{\theta}_{m,0}=c_{3},\label{epsilonm}
\end{align}
where ${\bf{S}}^{\theta}_{1,m}\in\mathbb{R}^{k_{1}\times r_{1}}$,
${\bf{S}}^{\theta}_{2,m}\in\mathbb{R}^{p_{1}\times r_{2}}$
and
${\bf{S}}^{\theta}_{3,m}\in\mathbb{R}^{p_{2}\times r_{3}}$. 
Furthermore, the relationship between $\eta^{\theta}_{m,t}$ and  $\xi^{\theta}_{m,t}$ is expressed as follows:
\begin{align}
    \eta^{\theta}_{m,t}={\bf{B}}_m^{\theta}\eta^{\theta}_{m,t}+{\bf{\Gamma}}_m^{\theta}\xi^{\theta}_{m,t}+\zeta^{\theta}_{m,t}\label{etam},
\end{align}
where 
${\bf{B}}^{\theta}_m\in\mathbb{R}^{k_{2}\times k_{2}}$ is a constant loading matrix, whose diagonal elements are zero, and ${\bf{\Gamma}}^{\theta}_m\in\mathbb{R}^{k_{2}\times k_{1}}$ is a constant loading matrix. Set ${\bf{\Psi}}^{\theta}_m=\mathbb{I}_{k_2}-{\bf{B}}^{\theta}_m$. It is supposed that ${\bf{\Lambda}}^{\theta}_{x_1,m}$ is a full column rank matrix and ${\bf{\Psi}}^{\theta}_{m}$ is non-singular. 
$\{\zeta^{\theta}_{m,t}\}_{t\geq 0}$ is a $k_{2}$-dimensional latent unique factor vector process
defined by the following stochastic differential equation:
\begin{align}
    \quad\dd\zeta^{\theta}_{m,t}=B_{4}(\zeta^{\theta}_{m,t})\dd t+{\bf{S}}^{\theta}_{4,m}\dd W_{4,t}, \ \ 
    \zeta^{\theta}_{m,0}=c_{4},\label{zetam}
\end{align}
where ${\bf{S}}^{\theta}_{4,m}\in\mathbb{R}^{k_{2}\times r_{4}}$. Let ${\bf{\Sigma}}^{\theta}_{\xi\xi,m}={\bf{S}}^{\theta}_{1,m}{\bf{S}}^{\theta\top}_{1,m}$, ${\bf{\Sigma}}^{\theta}_{\delta\delta,m}={\bf{S}}^{\theta}_{2,m}{\bf{S}}^{\theta\top}_{2,m}$, ${\bf{\Sigma}}^{\theta}_{\varepsilon\varepsilon,m}={\bf{S}}^{\theta}_{3,m}{\bf{S}}^{\theta\top}_{3,m}$ and ${\bf{\Sigma}}^{\theta}_{\zeta\zeta,m}={\bf{S}}^{\theta}_{4,m}{\bf{S}}^{\theta\top}_{4,m}$. It is assumed that ${\bf{\Sigma}}^{\theta}_{\delta\delta,m}$ and ${\bf{\Sigma}}^{\theta}_{\varepsilon\varepsilon,m}$ are positive definite matrices. 
Define $\mathbb{X}^{\theta}_{m,t}=(\mathbb{X}_{1,m,t}^{\theta\top},\mathbb{X}_{2,m,t}^{\theta\top})^{\top}$.
Set
\begin{align*}
    {\bf{\Sigma}}_m(\theta_m)=\begin{pmatrix}
    {\bf{\Sigma}}_m^{11}(\theta_m) & {\bf{\Sigma}}_m^{12}(\theta_m)\\
    {\bf{\Sigma}}_m^{12\top}(\theta_m) & {\bf{\Sigma}}_m^{22}(\theta_m)
    \end{pmatrix}
\end{align*}
as the variance of $\mathbb{X}^{\theta}_{m,t}$, where 
\begin{align*}                      
    \qquad\qquad{\bf{\Sigma}}^{11}_m(\theta_m)&
    ={\bf{\Lambda}}^{\theta}_{x_1,m}{\bf{\Sigma}}^{\theta}_{\xi\xi,m}{\bf{\Lambda}}_{x_1,m}^{\theta\top}
    +{\bf{\Sigma}}^{\theta}_{\delta\delta,m},\\  {\bf{\Sigma}}^{12}_m(\theta_m)&={\bf{\Lambda}}^{\theta}_{x_1,m}{\bf{\Sigma}}^{\theta}_{\xi\xi,m}{\bf{\Gamma}}_m^{\theta\top}{\bf{\Psi}}_m^{\theta-1\top}{\bf{\Lambda}}_{x_2,m}^{\theta\top},\\
    {\bf{\Sigma}}^{22}_m(\theta_m)&={\bf{\Lambda}}^{\theta}_{x_2,m}{\bf{\Psi}}_m^{\theta-1}({\bf{\Gamma}}_m^{\theta}{\bf{\Sigma}}^{\theta}_{\xi\xi,m}{\bf{\Gamma}}_m^{\theta\top}+{\bf{\Sigma}}^{\theta}_{\zeta\zeta,m}){\bf{\Psi}}_m^{\theta-1\top}{\bf{\Lambda}}_{x_2,m}^{\theta\top}+{\bf{\Sigma}}^{\theta}_{\varepsilon\varepsilon,m}.
\end{align*}
It is supposed that there exists $\theta_{m,0}\in{\rm{Int}}\Theta_m$ such that ${\bf{\Sigma}}_0={\bf{\Sigma}}_m(\theta_{m,0})$, and Model $m$ satisfies an identifiability condition.

Structural equation modeling (SEM) with latent variables is a method of analyzing the relationships between latent variables that cannot be observed; see, e.g., J{\"o}reskog \cite{Joreskog(1970)}, 
Everitt \cite{Everitt(1984)}, Mueller \cite{Mueller(1999)} and references therein. 
A researcher has often some candidate models in SEM. Note that the candidate models are usually specified to express different hypotheses. The goodness-of-fit test based on the likelihood ratio is widely used for model evaluation in SEM. 
Akaike \cite{Akaike(1987)} proposed the use of the Akaike information criterion (AIC) in a factor model. Using AIC in a factor model, we can choose the optimal number of factors in terms of prediction. AIC is also widely used in SEM to choose the optimal model as well as a factor model; see e.g., Huang \cite{Huang AIC(2017)}.

Thanks to the development of measuring devices, high-frequency data such as stock prices can be easily obtained these days, so that many researchers have studied parametric estimation of diffusion processes based on high-frequency data; see, e.g., Yoshida \cite{Yoshida(1992)}, Genon-Catalot and Jacod \cite{Genon(1993)}, Kessler \cite{kessler(1997)}, Uchida and Yoshida \cite{Uchi-Yoshi(2012)} and references therein. Recently, in the field of financial econometrics, the factor model based on high-frequency data has been extensively studied. A{\"i}t-Sahalia and Xiu \cite{Ait(2017)} proposed a continuous-time latent factor model for a high-dimensional model using principal component analysis. Kusano and Uchida \cite{Kusano(2024)} suggested classical factor analysis for diffusion processes. This method enables us to analyze the relationships between low-dimensional observed variables sampled with high frequency and latent variables. For instance, based on high-frequency stock price data, we can analyze latent variables such as a world market factor and factors related to a certain industry (Figure \ref{FA}). 
On the other hand, there have been few researchers who examine the relationships between these latent variables based on high-frequency data. Kusano and Uchida \cite{Kusano(JJSD)} proposed SEM with latent variables for diffusion processes. Using this method, one can examine the relationships between latent variables based on high-frequency data. For example, if we want to study the relationship between the world market factor and the Japanese financial factor, this method enables us to analyze the relationship (Figure \ref{SEM}). SEM with latent variables may be referred as the regression analysis between latent variables. While both explanatory and objective variables are observable in regression analysis, both of them are latent in SEM with latent variables. For the regression analysis and the market models based on high-frequency data, see, e.g., A{\"i}t-Sahalia et al. \cite{Ait(2020)}. 

The model selection problem for diffusion processes based on discrete observations has been actively studied. Uchida \cite{Uchida(2010)} proposed the contrast-based information criterion for ergodic diffusion processes, and obtained 
the asymptotic result of the difference between the contrast-based information criteria. Eguchi and Masuda \cite{Eguchi(2023)} studied the model comparison problem for semiparametric L$\acute{e}$vy driven SDE and suggested the Gaussian quasi-AIC. Since the information criterion is important in SEM as mentioned above, we propose the quasi-AIC (QAIC) of SEM with latent variables for diffusion processes and study the asymptotic properties. In this paper, we consider the non-ergodic case. For the ergodic case, see Appendix \ref{ergodic}.

The paper is organized as follows. In Section 2, we introduce the notation and assumptions. In Section 3, the QAIC of SEM with latent variables for diffusion processes is considered.  Moreover, the situation where the set of competing models includes some (not all) misspecified parametric models is studied. It is shown that the probability of choosing the misspecified models converges to zero. In Section 4, we give examples and simulation results. In Section 5, the results described in Section 3 are proved.
\begin{figure}[h]
    \includegraphics[width=0.9\columnwidth]{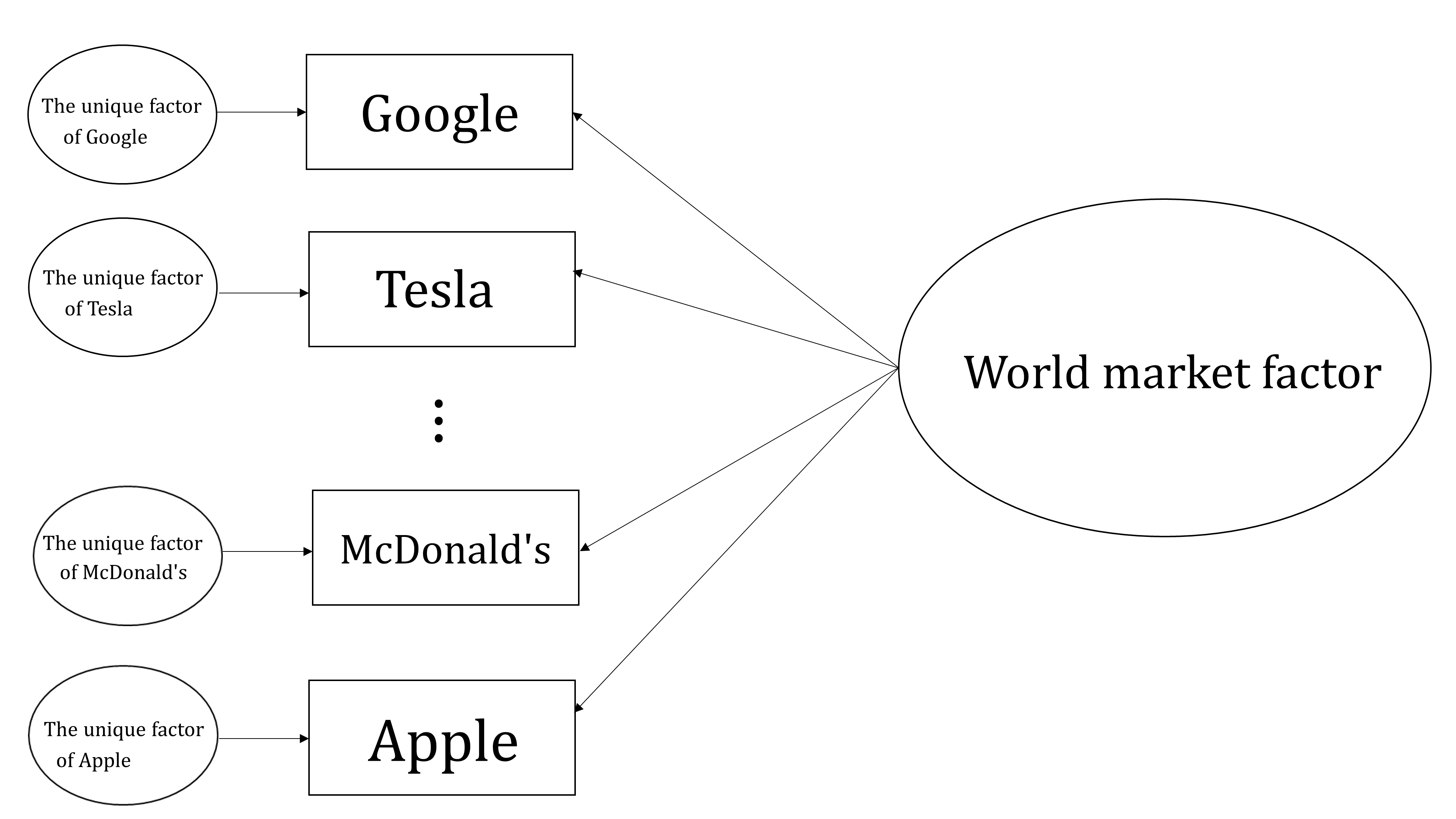}
    \includegraphics[width=0.9\columnwidth]{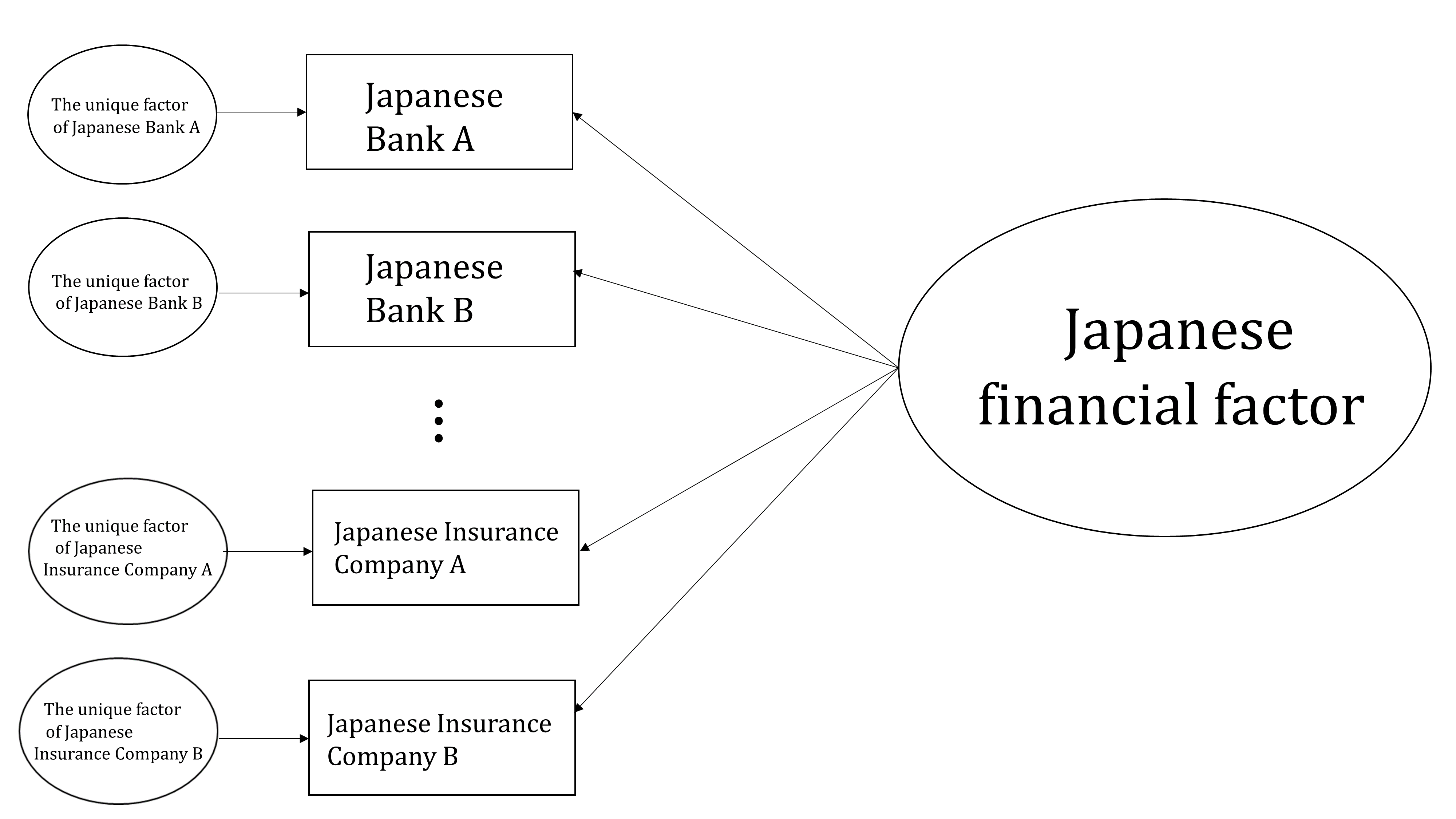}
    \caption{The path diagram for the example of factor analysis.}
    \label{FA}
\end{figure}
\begin{figure}[h]
    \includegraphics[width=0.9\columnwidth]{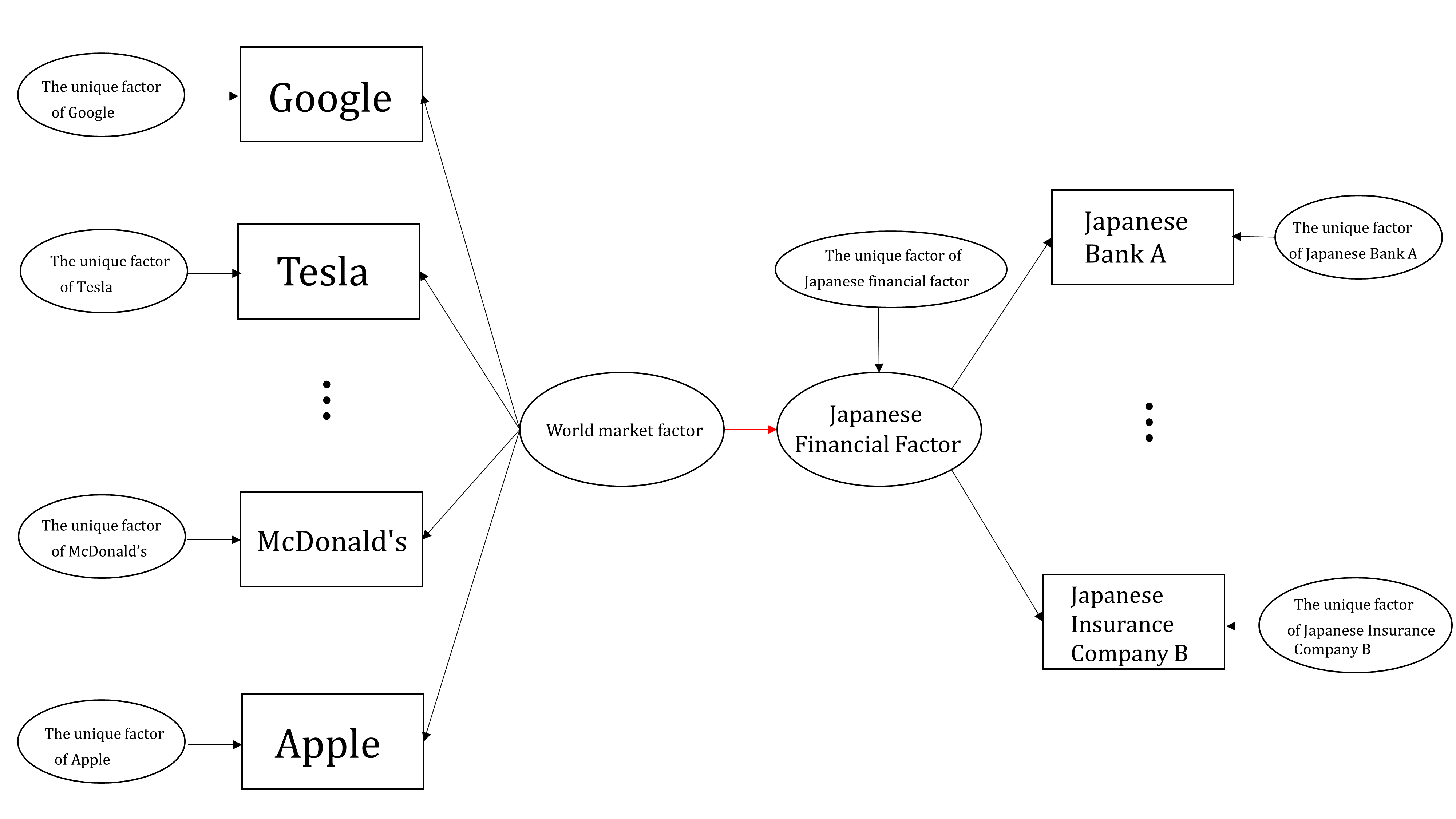}
    \caption{The path diagram for the example of SEM.}
    \label{SEM}
\end{figure}
\section{Notation and assumptions}
First, we prepare the following notations and definitions.
For any vector $v$, $|v|=\sqrt{\tr{vv^\top}}$, $v^{(i)}$ is the $i$-th element of $v$, and $\Diag v$ is the diagonal matrix, whose $i$-th diagonal element is $v^{(i)}$.
For any matrix $A$, $|A|=\sqrt{\tr{AA^\top}}$, and $A_{ij}$ is the $(i,j)$-th element of $A$. For matrices $A$ and $B$ of the same size, $A[B]=\tr(AB^{\top})$. For any matrix $A\in\mathbb{R}^{p\times p}$ and vectors $x,y\in\mathbb{R}^p$, we define $A[x,y]=x^{\top}Ay$. 
For a positive definite matrix $A$, we write $A>0$. For any symmetric matrix $A\in\mathbb{R}^{p\times p}$, $\vec A$, $\vech A$ and $\mathbb{D}_{p}$ are the vectorization of $A$, the half-vectorization of $A$ and the $p^2\times\bar{p}$ duplication matrix respectively, where $\bar{p}=p(p+1)/2$. Note that $\vec{A}=\mathbb{D}_{p}\vech{A}$; see, e.g., Harville \cite{Harville(1998)}. For any matrix $A$, $A^{+}$ stands for the Moore-Penrose inverse of $A$. Set $\mathcal{M}_p^{++}$ as the sets of all $p\times p$ real-valued positive definite matrices.
For any positive sequence $u_{n}$, 
${\rm{R}}:[0,\infty)\times \mathbb{R}^d\rightarrow \mathbb{R}$ denotes the short notation for functions which satisfy $|{\rm{R}}({u_{n}},x)|\leq u_{n}C(1+|x|)^C$ for some $C>0$. Let $ C^{k}_{\uparrow}(\mathbb R^{d})$ be the space of all functions $f$ satisfying the following conditions:
\begin{itemize}
    \item[(i)] $f$ is continuously differentiable with respect to $x\in \mathbb{R}^d$ up to order $k$. 
    \item[(ii)] $f$ and all its derivatives are of polynomial growth in $x\in \mathbb{R}^d$, i.e., 
    $g$ is
of polynomial growth in $x\in \mathbb{R}^d$ if $\displaystyle g(x)=R(1,x)$. 
\end{itemize}

The symbols $\stackrel{p}{\longrightarrow}$ and $\stackrel{d}{\longrightarrow}$ denote convergence in probability and convergence in distribution, respectively. For any process $Y_t$, $\Delta_i Y=Y_{t_{i}^n}-Y_{t_{i-1}^n}$. Set
\begin{align*}
    \mathbb{Q}_{\mathbb{XX}}=\frac{1}{T}\sum_{i=1}^n(\mathbb{X}_{t_i^n}-\mathbb{X}_{t_{i-1}^n})(\mathbb{X}_{t_i^n}-\mathbb{X}_{t_{i-1}^n})^{\top}.
\end{align*}
${\bf{E}}$ denotes the expectation under ${\bf{P}}$. Next, we make the following assumptions.
\begin{enumerate}
    \vspace{2mm}
    \item[\bf{[A]}]
    \begin{enumerate}
    \item
    \begin{enumerate}
    \item[(i)] There exists a constant $C>0$ such that
    \begin{align*}
    |B_{1}(x)-B_{1}(y)|\leq C|x-y|
    \end{align*}
    for any $x,y\in\mathbb R^{k_{1}}$.
    \vspace{2mm}
    \item[(ii)] For all $\ell>0$,
    $\displaystyle\sup_t{\bf{E}}\Bigl[\bigl|\xi_{0,t}\bigr|^{\ell}\Bigr]<\infty$.
    \vspace{2mm}
    \item[(iii)] $B_{1}\in C^{4}_{\uparrow}(\mathbb R^{k_{1}})$.
    \end{enumerate} 
    \vspace{2mm}
    \item
    \begin{enumerate}
    \item[(i)] There exists a constant $C>0$ such that
    \begin{align*}
    |B_{2}(x)-B_{2}(y)|\leq C|x-y|
    \end{align*}
    for any $x,y\in\mathbb R^{p_1}$.
    \vspace{2mm}
    \item[(ii)] For all $\ell\geq 0$, 
    $\displaystyle\sup_t{\bf{E}}\Bigl[\bigl|\delta_{0,t}\bigr|^{\ell}\Bigr]<\infty$.
    \vspace{2mm}
    \item[(iii)] $B_{2}\in C^{4}_{\uparrow}(\mathbb R^{p_1})$.
    \end{enumerate}
    \vspace{2mm}
    \item
    \begin{enumerate}
    \item[(i)] There exists a constant $C>0$ such that 
    \begin{align*}
    |B_{3}(x)-B_{3}(y)|\leq C|x-y|
    \end{align*}
    for any $x,y\in\mathbb R^{p_2}$.
    \vspace{2mm}
    \item[(ii)] For all $\ell\geq 0$, 
    $\displaystyle\sup_t{\bf{E}}\Bigl[\bigl|\varepsilon_{0,t}\bigr|^{\ell}\Bigr]<\infty$.
    \vspace{2mm}
    \item[(iii)] $B_{3}\in C^{4}_{\uparrow}(\mathbb R^{p_2})$.
    \end{enumerate}
    \vspace{2mm}
    \item
    \begin{enumerate}
    \item[(i)] There exists a constant $C>0$ such that
    \begin{align*}
    |B_{4}(x)-B_{4}(y)|\leq C|x-y|
    \end{align*}
    for any $x,y\in\mathbb R^{k_{2}}$.
    \vspace{2mm}
    \item[(ii)] For all $\ell\geq 0$, $\displaystyle\sup_t{\bf{E}}\Bigl[\bigl|\zeta_{0,t}\bigr|^{\ell}\Bigr]<\infty$.
    \vspace{2mm}
    \item[(iii)] $B_{4}\in C^{4}_{\uparrow}(\mathbb R^{k_{2}})$.
    \end{enumerate}
    \end{enumerate}
\end{enumerate}
\begin{remark}
For diffusion processes, $[{\bf{A}}]$ is the standard assumption; see, e.g., Kessler \cite{kessler(1997)}. 
\end{remark}
\section{Qaic of sem for diffusion processes}
Using a locally Gaussian approximation, we obtain the following quasi-likelihood of Model $m$ from (\ref{Xm})-(\ref{zetam}):
\begin{align*}
    \prod_{i=1}^n\frac{1}{(2\pi h_n)^{\frac{p}{2}}(\det {\bf{\Sigma}}_m(\theta_m))^{\frac{1}{2}}}\exp\Biggl(-
    \frac{1}{2h_n}{\bf{\Sigma}}_m(\theta_m)^{-1}\Bigl[\bigl(\Delta_i\mathbb{X}^{\theta}_{m}\bigr)^{\otimes 2}\Bigr]\Biggr).
\end{align*}
See Appendix 8.1 in Kusano and Uchida \cite{Kusano(2023)} for details of the quasi-likelihood. Define the quasi-likelihood ${\rm{L}}_{m,n}$ based on the discrete observations $\mathbb{X}_n$ as follows:
\begin{align*}
    {\rm{L}}_{m,n}\bigl(\mathbb{X}_n,\theta_m\bigr)=\prod_{i=1}^n\frac{1}{(2\pi h_n)^{\frac{p}{2}}(\det {\bf{\Sigma}}_m(\theta_m))^{\frac{1}{2}}}\exp\Biggl(-
    \frac{1}{2h_n}{\bf{\Sigma}}_m(\theta_m)^{-1}\Bigl[\bigl(\Delta_i\mathbb{X}\bigr)^{\otimes 2}\Bigr]\Biggr).
\end{align*}
The quasi-maximum likelihood estimator $\hat{\theta}_{m,n}$ is defined by
\begin{align*}
    {\rm{L}}_{m,n}\bigl(\mathbb{X}_n,\hat{\theta}_{m,n}(\mathbb{X}_n)\bigr)=\sup_{\theta_m\in\Theta_m}{\rm{L}}_{m,n}\bigl(\mathbb{X}_n,\theta_m\bigr).
\end{align*}
Set $\mathbb{Z}_n$ as an i.i.d. copy of $\mathbb{X}_n$. Let us consider the following Kullback-Leibler divergence between the transition density $q_n(\mathbb{Z}_n)$ of the true model (\ref{X0})-(\ref{zeta0}) and the quasi-likelihood ${\rm{L}}_{m,n}$:
\begin{align*}
\begin{split}
    {\rm{K_L}}(\mathbb{X}_n,m)&={\bf{E}}_{\mathbb{Z}_n}\Biggl[\log \frac{q_n({\mathbb{Z}}_n)}{{\rm{L}}_{m,n}\bigl(\mathbb{Z}_n,\hat{\theta}_{m,n}({\mathbb{X}_{n}})\bigr)}
    \Biggr]\\
    &={\bf{E}}_{\mathbb{Z}_n}\Bigl[\log q_n(\mathbb{Z}_n)\Bigr]-{\bf{E}}_{\mathbb{Z}_n}\Bigl[\log {\rm{L}}_{m,n}\bigl(\mathbb{Z}_n,\hat{\theta}_{m,n}({\mathbb{X}_{n}})\bigr)\Bigr],
\end{split}
\end{align*}
where ${\bf{E}}_{\mathbb{Z}_n}$ is the expectation under the law of $\mathbb{Z}_n$. Our purpose is to know the model which minimizes ${\rm{K_L}}(\mathbb{X}_n,m)$. Since ${\bf{E}}_{\mathbb{Z}_n}\Bigl[\log q_n(\mathbb{Z}_n)\Bigr]$ does not depend on the model, it is sufficient to consider the model which maximizes
\begin{align}
    {\bf{E}}_{\mathbb{Z}_n}\Bigl[\log {\rm{L}}_{m,n}\bigl(\mathbb{Z}_n,\hat{\theta}_{m,n}({\mathbb{X}_{n}})\bigr)\Bigr],\label{EZL}
\end{align}
so that we need to estimate (\ref{EZL}). Set
\begin{align*}
    \Delta_{m,0}=\left.\frac{\partial}{\partial\theta^{\top}}\vech {\bf{\Sigma}}_m(\theta_m)\right|_{\theta_m=\theta_{m,0}}
\end{align*}
and
\begin{align*}
    {\rm{Y}}_m(\theta_m)=-\frac{1}{2}\Bigl({\bf{\Sigma}}_m(\theta_m)^{-1}-{\bf{\Sigma}}_m(\theta_{m,0})^{-1}\Bigr)\Bigl[{\bf{\Sigma}}_m(\theta_{m,0})\Bigr]-\frac{1}{2}
    \log\frac{\det{\bf{\Sigma}}_m(\theta_m)}{\det{\bf{\Sigma}}_m(\theta_{m,0})}.
\end{align*}
Moreover, the following assumptions are made.
\begin{enumerate}
    \item[\bf{[B1]}]
    \begin{enumerate}
        \item There exists a constant $\chi>0$ such that
        \begin{align*}
         {\rm{Y}}_m(\theta_m)\leq -\chi\bigl|\theta_m-\theta_{m,0}\bigr|^2
        \end{align*}
        for all $\theta_m\in\Theta_m$.
        \vspace{2mm}
        \item $\rank \Delta_{m,0}=q_m$.
    \end{enumerate}
\end{enumerate}
\begin{remark}
$[{\bf{B1}}]\ ({\rm{a}})$ is the identifiability condition. $[{\bf{B1}}]\ ({\rm{b}})$ implies that the asymptotic variance of $\hat{\theta}_{m,n}$ is non-singular; see Lemma 35 in Kusano and Uchida \cite{Kusano(2023)} and Lemma \ref{thetaprob2}.
\end{remark}
By the following theorem, we obtain the asymptotically unbiased estimator of (\ref{EZL}).
\begin{theorem}\label{theorem1}
Let $m\in\{1,\cdots,M\}$. Under {\bf{[A]}} and {\bf{[B1]}}, as $n\longrightarrow\infty$,
\begin{align*}
    {\bf{E}}_{\mathbb{X}_n}\biggl[\log {\rm{L}}_{m,n}\bigl(\mathbb{X}_{n},\hat{\theta}_{m,n}({\mathbb{X}_{n}})\bigr)-{\bf{E}}_{\mathbb{Z}_n}\Bigl[\log {\rm{L}}_{m,n}\bigl(\mathbb{Z}_n,\hat{\theta}_{m,n}({\mathbb{X}_{n}})\bigr)\Bigr]\biggr]=q_m+o_p(1).
\end{align*}
\end{theorem}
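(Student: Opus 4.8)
The plan is to carry out the classical Akaike-type bias computation, adapted to the Gaussian quasi-likelihood analysis of this model. First I would rewrite the log quasi-likelihood so that its dependence on the data is explicit. Since $\sum_{i=1}^n(\Delta_i\mathbb{X})^{\otimes 2}=T\mathbb{Q}_{\mathbb{XX}}$ and $h_n=T/n$, one has
\begin{align*}
\log {\rm L}_{m,n}(\mathbb{X}_n,\theta_m)=-\frac{np}{2}\log(2\pi h_n)-\frac{n}{2}\log\det{\bf\Sigma}_m(\theta_m)-\frac{n}{2}{\bf\Sigma}_m(\theta_m)^{-1}\bigl[\mathbb{Q}_{\mathbb{XX}}\bigr],
\end{align*}
so the data enter only through $\mathbb{Q}_{\mathbb{XX}}$, and linearly. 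Writing $\mathbb{U}_m(\theta_m)=-\tfrac12\log\det{\bf\Sigma}_m(\theta_m)-\tfrac12{\bf\Sigma}_m(\theta_m)^{-1}[{\bf\Sigma}_0]$, one checks ${\rm Y}_m(\theta_m)=\mathbb{U}_m(\theta_m)-\mathbb{U}_m(\theta_{m,0})$, so $[{\bf B1}]\,({\rm a})$ states that $\theta_{m,0}$ is the well-separated maximiser of $\mathbb{U}_m$; together with the realised-covariation law $\mathbb{Q}_{\mathbb{XX}}\stackrel{p}{\longrightarrow}{\bf\Sigma}_0$ this yields consistency. The crucial preliminary observation is that, since $\mathbb{X}_n$ and $\mathbb{Z}_n$ share the same law, ${\bf E}_{\mathbb{X}_n}[\log {\rm L}_{m,n}(\mathbb{X}_n,\theta_{m,0})]={\bf E}_{\mathbb{Z}_n}[\log {\rm L}_{m,n}(\mathbb{Z}_n,\theta_{m,0})]$; hence, after inserting and subtracting the value at $\theta_{m,0}$, the target splits into an in-sample increment and an out-of-sample increment, both taken relative to this common value, and their $\theta_{m,0}$-terms cancel under ${\bf E}_{\mathbb{X}_n}$.

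Next I would produce the stochastic expansion of $\hat{\theta}_{m,n}$. Set $\mathcal{G}_{m,n}=n^{-1/2}\partial_\theta\log {\rm L}_{m,n}(\mathbb{X}_n,\theta_{m,0})$; because $\partial_\theta\mathbb{U}_m(\theta_{m,0})=0$, the score is driven by $\mathbb{Q}_{\mathbb{XX}}-{\bf\Sigma}_0=O_p(n^{-1/2})$, so $\mathcal{G}_{m,n}=O_p(1)$. Let $\mathcal{I}_m=-\partial_\theta^2\mathbb{U}_m(\theta_{m,0})$, the limit of $-n^{-1}\partial_\theta^2\log {\rm L}_{m,n}(\mathbb{X}_n,\theta_{m,0})$, which is nonsingular by $[{\bf B1}]\,({\rm b})$ since $\mathcal{I}_m=\tfrac12\Delta_{m,0}^\top\mathbb{D}_p^\top({\bf\Sigma}_0^{-1}\otimes{\bf\Sigma}_0^{-1})\mathbb{D}_p\Delta_{m,0}$. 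Lemma \ref{thetaprob2} gives $\sqrt{n}(\hat{\theta}_{m,n}-\theta_{m,0})=\mathcal{I}_m^{-1}\mathcal{G}_{m,n}+o_p(1)$. Taylor expanding the in-sample increment to second order about $\theta_{m,0}$ and substituting this expansion yields $\log {\rm L}_{m,n}(\mathbb{X}_n,\hat{\theta}_{m,n})-\log {\rm L}_{m,n}(\mathbb{X}_n,\theta_{m,0})=\tfrac12\mathcal{G}_{m,n}^\top\mathcal{I}_m^{-1}\mathcal{G}_{m,n}+o_p(1)$. For the out-of-sample increment, ${\bf E}_{\mathbb{Z}_n}[\log {\rm L}_{m,n}(\mathbb{Z}_n,\theta_m)]$ depends on $\theta_m$ through ${\bf\Sigma}_m(\theta_m)$ and the deterministic matrix ${\bf E}[\mathbb{Q}_{\mathbb{XX}}]={\bf\Sigma}_0+O(h_n)$; using this and $\partial_\theta\mathbb{U}_m(\theta_{m,0})=0$, the first-order term is negligible and the second-order term gives $-\tfrac12\mathcal{G}_{m,n}^\top\mathcal{I}_m^{-1}\mathcal{G}_{m,n}+o_p(1)$. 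Subtracting, the difference of the two increments equals $\mathcal{G}_{m,n}^\top\mathcal{I}_m^{-1}\mathcal{G}_{m,n}+o_p(1)$.

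It then remains to take ${\bf E}_{\mathbb{X}_n}$ and pass to the limit, and the key point is the quasi-likelihood information identity $\mathrm{Avar}(\mathcal{G}_{m,n})=\mathcal{I}_m$. Since the increments are asymptotically Gaussian with covariance $h_n{\bf\Sigma}_0$ and the model is correctly specified, one has $\mathcal{G}_{m,n}^{(a)}=\tfrac12\tr\bigl({\bf\Sigma}_0^{-1}(\partial_a{\bf\Sigma}_m)(\theta_{m,0}){\bf\Sigma}_0^{-1}\,\sqrt{n}(\mathbb{Q}_{\mathbb{XX}}-{\bf\Sigma}_0)\bigr)$, where $\partial_a$ is differentiation in the $a$-th coordinate of $\theta_m$; a Wick/Isserlis computation using the central limit theorem for $\mathbb{Q}_{\mathbb{XX}}$ (the factors of $T$ cancel because $n h_n=T$) shows that the asymptotic covariance of $\mathcal{G}_{m,n}$ equals $\tfrac12\tr({\bf\Sigma}_0^{-1}\partial_a{\bf\Sigma}_m{\bf\Sigma}_0^{-1}\partial_b{\bf\Sigma}_m)|_{\theta_{m,0}}=(\mathcal{I}_m)_{ab}$. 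Since ${\bf E}[\mathcal{G}_{m,n}]=O(\sqrt{n}h_n)=o(1)$ as well, ${\bf E}_{\mathbb{X}_n}[\mathcal{G}_{m,n}^\top\mathcal{I}_m^{-1}\mathcal{G}_{m,n}]=\tr\bigl(\mathcal{I}_m^{-1}{\bf E}_{\mathbb{X}_n}[\mathcal{G}_{m,n}\mathcal{G}_{m,n}^\top]\bigr)\longrightarrow\tr(\mathcal{I}_m^{-1}\mathcal{I}_m)=q_m$.

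The main obstacle is not the algebra of the expansion but the promotion of the $o_p(1)$ remainders to $o(1)$ after taking expectations, i.e. uniform integrability. I would control this with the polynomial-type large-deviation estimates of the quasi-likelihood analysis together with the compactness of $\Theta_m$: these furnish moments of every order for $\sqrt{n}(\hat{\theta}_{m,n}-\theta_{m,0})$ and a uniform bound on the probability that $\hat{\theta}_{m,n}$ leaves a neighbourhood of $\theta_{m,0}$, on which the maximised log quasi-likelihood is dominated using the boundedness of ${\bf\Sigma}_m(\cdot)^{-1}$ on $\Theta_m$ and the moment bounds supplied by $[{\bf A}]$. Combined with uniform integrability of $\mathcal{G}_{m,n}^\top\mathcal{I}_m^{-1}\mathcal{G}_{m,n}$, this justifies interchanging expectation and limit and completes the proof.
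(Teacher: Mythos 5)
Your proposal is correct and follows essentially the same route as the paper's proof: the same decomposition of the target around $\theta_{m,0}$ exploiting that $\mathbb{X}_n$ and $\mathbb{Z}_n$ are equal in law (the paper's ${\rm{D}}_{1,n}+{\rm{D}}_{2,n}+{\rm{D}}_{3,n}$ with ${\rm{D}}_{2,n}=0$), the same second-order Taylor expansions of the in-sample and out-of-sample increments combined with the score--information identity (each contributing $q_m/2$), and the same uniform-integrability mechanism, namely polynomial-type large-deviation inequalities giving moments of every order for $\sqrt{n}(\hat{\theta}_{m,n}-\theta_{m,0})$ and for the normalized score, which is exactly how the paper (via Yoshida's theory) promotes the $o_p(1)$ remainders to $o(1)$ after taking expectations. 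The only cosmetic differences are that you expand the deterministic map $\theta_m\mapsto{\bf{E}}_{\mathbb{Z}_n}\bigl[\log{\rm{L}}_{m,n}(\mathbb{Z}_n,\theta_m)\bigr]$ and merge the two quadratic forms into a single $\mathcal{G}_{m,n}^{\top}\mathcal{I}_m^{-1}\mathcal{G}_{m,n}$ before passing to the limit, whereas the paper expands ${\rm{H}}_{m,n}(\mathbb{Z}_n,\cdot)$ pathwise, uses independence of $\mathbb{Z}_n$ and $\hat{\theta}_{m,n}$, and evaluates the two halves separately.
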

We define the quasi-Akaike information criterion as 
\begin{align}
    {\rm{QAIC}}(\mathbb{X}_n,m)=-2\log{\rm{L}}_{m,n}\bigl(\mathbb{X}_n,\hat{\theta}_{m,n}({\mathbb{X}_{n}})\bigr)+2q_m. \label{QAIC}
\end{align}
Since it holds from Theorem \ref{theorem1} that ${\rm{QAIC}}(\mathbb{X}_n,m)$ is the asymptotically unbiased estimator of 
\begin{align*}
    -2{\bf{E}}_{\mathbb{Z}_n}\Bigl[\log {\rm{L}}_{m,n}\bigl(\mathbb{Z}_n,\hat{\theta}_{m,n}({\mathbb{X}_{n}})\bigr)\Bigr],
\end{align*}
we select the optimal model $\hat{m}_n$ among competing models by
\begin{align}
    {\rm{QAIC}}(\mathbb{X}_n,\hat{m}_n)=\min_{m\in\{1,\cdots,M\}}{\rm{QAIC}}(\mathbb{X}_n,m). \label{QAICm}
\end{align}
\begin{remark}
Since ${\rm{L}}_{m,n}$ is not the exact likelihood but the quasi-likelihood, 
all the competing models are misspecified. 
Note that we consider a model selection problem among the quasi-likelihood models; see, e.g., Eguchi and Masuda \cite{Eguchi(2023)}.
\end{remark}
\begin{remark}
In SEM, instead of (\ref{QAIC}), 
\begin{align}
    n{\rm{F}}_{m,n}\bigl(\mathbb{X}_n,\hat{\theta}_{m,n}(\mathbb{X}_n)\bigr)+2q_m \label{SEMQAIC}
\end{align}
is often used for a model selection as $\mathbb{Q}_{\mathbb{XX}}>0$, where 
\begin{align*}
    {\rm{F}}_{m,n}\bigl(\mathbb{X}_n,\theta_m\bigr)=
    \log\det{\bf{\Sigma}}_m(\theta_m)-\log\det\mathbb{Q}_{\mathbb{XX}}+\tr\Bigl({\bf{\Sigma}}_m(\theta_m)^{-1}\mathbb{Q}_{\mathbb{XX}}\Bigr)-p.
\end{align*}
For details of (\ref{SEMQAIC}), see, e.g., Huang \cite{Huang AIC(2017)}. Since
\begin{align*}
    -2\log{\rm{L}}_{m,n}\bigl(\mathbb{X}_n,\hat{\theta}_{m,n}({\mathbb{X}_{n}})\bigr)&=np\log(2\pi h_n)+n\log\det{\bf{\Sigma}}_m(\hat{\theta}_{m,n})+n\tr\Bigl({\bf{\Sigma}}_m(\hat{\theta}_{m,n})^{-1}\mathbb{Q}_{\mathbb{XX}}\Bigr)\\
    &=n{\rm{F}}_{m,n}\bigl(\mathbb{X}_n,\hat{\theta}_{m,n}(\mathbb{X}_n)\bigr)+n\Bigl\{p\log(2\pi h_n)+\log\det\mathbb{Q}_{\mathbb{XX}}+p\Bigr\}
\end{align*}
as $\mathbb{Q}_{\mathbb{XX}}>0$, it is shown that
\begin{align*}
    n{\rm{F}}_{m,n}\bigl(\mathbb{X}_n,\hat{\theta}_{m,n}(\mathbb{X}_n)\bigr)+2q_m={\rm{QAIC}}(\mathbb{X}_n,m)-n\Bigl\{p\log(2\pi h_n)+\log\det\mathbb{Q}_{\mathbb{XX}}+p\Bigr\}
\end{align*}
as $\mathbb{Q}_{\mathbb{XX}}>0$. Note that 
\begin{align*}
    n\Bigl\{p\log(2\pi h_n)+\log\det\mathbb{Q}_{\mathbb{XX}}+p\Bigr\}
\end{align*}
does not depend on the model. 
Even if we use (\ref{SEMQAIC}) instead of (\ref{QAIC}), the model selection results are not different.
\end{remark}
Next, we consider the situation where the set of competing models includes some (not all) misspecified parametric models; that is, there exists $m\in\{1,\cdots,M\}$ such that 
\begin{align*}
    {\bf{\Sigma}}_0\neq {\bf{\Sigma}}_{m}(\theta_{m})
\end{align*}
for all $\theta_{m}\in\Theta_{m}$. Set
\begin{align*}
    \mathcal{M}=\biggl\{m\in\{1,\cdots,M\}\ \Big|\ \mbox{There exists}\ \theta_{m,0}\in\Theta_{m}\ \mbox{such that}\ {\bf{\Sigma}}_0={\bf{\Sigma}}_{m}(\theta_{m,0}).\biggr\}
\end{align*}
and $\mathcal{M}^{c}=\{1,\cdots,M\}	\backslash \mathcal{M}$. The optimal parameter $\bar{\theta}_m$ is defined as  
\begin{align*}
    {\rm{H}}_{m}(\bar{\theta}_m)=\sup_{\theta_m\in\Theta_m}{\rm{H}}_{m}(\theta_m),
\end{align*}
where
\begin{align*}
    {\rm{H}}_{m}(\theta_m)=-\frac{1}{2}\tr\Bigl({\bf{\Sigma}}_{m}
    (\theta_{m})^{-1}{\bf{\Sigma}}_0\Bigr)-\frac{1}{2}\log\det {\bf{\Sigma}}_{m}(\theta_{m}).
\end{align*}
Note that $\bar{\theta}_m=\theta_{m,0}$ for $m\in\mathcal{M}$. Furthermore, we make the following assumption.
\vspace{2mm}
\begin{enumerate}
    \item[\bf{[B2]}] ${\rm{H}}_{m}(\theta_m)={\rm{H}}_{m}(\bar{\theta}_m)\Longrightarrow\theta_m=\bar{\theta}_m$.
    \vspace{2mm}
\end{enumerate}
$\bf{[B2]}$ implies that $\hat{\theta}_{m,n}\stackrel{p}{\longrightarrow}\bar{\theta}_{m}$; see, Lemma 36 in Kusano and Uchida \cite{Kusano(2023)}. The following asymptotic result of $\hat{m}_n$ defined in (\ref{QAICm}) holds.
\begin{theorem}\label{misstheorem}
Under {\bf{[A]}} and {\bf{[B2]}}, as $n\longrightarrow\infty$,
\begin{align*}
    {\bf{P}}\Bigl(\hat{m}_{n}\in\mathcal{M}^{c}\Bigr)\longrightarrow 0. 
\end{align*}    
\end{theorem}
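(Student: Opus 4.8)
The plan is to show that, after dividing by $n$, the QAIC of any misspecified model exceeds that of a correctly specified one by a strictly positive amount, so that a union bound over the finitely many models forces ${\bf{P}}(\hat{m}_n\in\mathcal{M}^c)\to0$. Throughout I use that $\mathcal{M}\neq\emptyset$ (not all models are misspecified) and fix an arbitrary $m^*\in\mathcal{M}$, for which ${\bf{\Sigma}}_{m^*}(\bar{\theta}_{m^*})={\bf{\Sigma}}_0$.

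First I would record the engine of the proof, the high-frequency quadratic-variation limit $\mathbb{Q}_{\mathbb{XX}}\stackrel{p}{\longrightarrow}{\bf{\Sigma}}_0$, which holds under $[{\bf{A}}]$ since the factor structure makes the instantaneous covariance of $\mathbb{X}_{0,t}$ constant and equal to ${\bf{\Sigma}}_0$. Using the identity recorded in the remark after (\ref{QAIC}),
\begin{align*}
    \frac{1}{n}{\rm{QAIC}}(\mathbb{X}_n,m)=p\log(2\pi h_n)-2\mathbb{H}_{m,n}\bigl(\hat{\theta}_{m,n}\bigr)+\frac{2q_m}{n},\qquad \mathbb{H}_{m,n}(\theta_m)=-\frac{1}{2}\log\det{\bf{\Sigma}}_m(\theta_m)-\frac{1}{2}\tr\Bigl({\bf{\Sigma}}_m(\theta_m)^{-1}\mathbb{Q}_{\mathbb{XX}}\Bigr),
\end{align*}
so $\mathbb{H}_{m,n}$ is the empirical analogue of ${\rm{H}}_m$ with $\mathbb{Q}_{\mathbb{XX}}$ in place of ${\bf{\Sigma}}_0$. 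Because $\Theta_m$ is compact and $\theta_m\mapsto{\bf{\Sigma}}_m(\theta_m)^{-1}$ is continuous, hence bounded on $\Theta_m$, one has $\sup_{\theta_m\in\Theta_m}|\mathbb{H}_{m,n}(\theta_m)-{\rm{H}}_m(\theta_m)|\leq\frac12\bigl(\sup_{\theta_m\in\Theta_m}|{\bf{\Sigma}}_m(\theta_m)^{-1}|\bigr)\,|\mathbb{Q}_{\mathbb{XX}}-{\bf{\Sigma}}_0|\stackrel{p}{\longrightarrow}0$. Combining this uniform convergence with $\hat{\theta}_{m,n}\stackrel{p}{\longrightarrow}\bar{\theta}_m$ (which follows from $[{\bf{B2}}]$) and the continuity of ${\rm{H}}_m$ gives $\mathbb{H}_{m,n}(\hat{\theta}_{m,n})\stackrel{p}{\longrightarrow}{\rm{H}}_m(\bar{\theta}_m)$.

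Second, and this is the conceptual crux, I would prove the separation ${\rm{H}}_m(\bar{\theta}_m)<{\rm{H}}_{m^*}(\bar{\theta}_{m^*})$ for every $m\in\mathcal{M}^c$. Consider the map ${\bf{\Sigma}}\mapsto-\frac12\tr({\bf{\Sigma}}^{-1}{\bf{\Sigma}}_0)-\frac12\log\det{\bf{\Sigma}}$ on $\mathcal{M}_p^{++}$. Writing its value minus the value at ${\bf{\Sigma}}_0$ in terms of the eigenvalues $\lambda_1,\dots,\lambda_p>0$ of ${\bf{\Sigma}}^{-1}{\bf{\Sigma}}_0$ gives $-\frac12\sum_{i=1}^p(\lambda_i-\log\lambda_i-1)\leq0$, with equality if and only if every $\lambda_i=1$, i.e. ${\bf{\Sigma}}={\bf{\Sigma}}_0$, since $x-\log x-1>0$ for $x\neq1$. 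Thus this map is uniquely maximized at ${\bf{\Sigma}}_0$, where its value is ${\rm{H}}_{m^*}(\bar{\theta}_{m^*})=-\frac{p}{2}-\frac12\log\det{\bf{\Sigma}}_0$. Since ${\bf{\Sigma}}_m(\bar{\theta}_m)\neq{\bf{\Sigma}}_0$ for $m\in\mathcal{M}^c$ by definition of $\mathcal{M}^c$, the strict inequality follows.

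Finally I would combine the two steps. For each fixed $m\in\mathcal{M}^c$ the $p\log(2\pi h_n)$ terms cancel and $2(q_m-q_{m^*})/n\to0$, so
\begin{align*}
    \frac{1}{n}\bigl({\rm{QAIC}}(\mathbb{X}_n,m)-{\rm{QAIC}}(\mathbb{X}_n,m^*)\bigr)\stackrel{p}{\longrightarrow}2\bigl({\rm{H}}_{m^*}(\bar{\theta}_{m^*})-{\rm{H}}_m(\bar{\theta}_m)\bigr)=:2c_m>0,
\end{align*}
whence ${\rm{QAIC}}(\mathbb{X}_n,m)-{\rm{QAIC}}(\mathbb{X}_n,m^*)\to+\infty$ in probability and ${\bf{P}}\bigl({\rm{QAIC}}(\mathbb{X}_n,m)\leq{\rm{QAIC}}(\mathbb{X}_n,m^*)\bigr)\to0$. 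Because $\hat{m}_n$ minimizes the QAIC, $\{\hat{m}_n\in\mathcal{M}^c\}\subseteq\bigcup_{m\in\mathcal{M}^c}\{{\rm{QAIC}}(\mathbb{X}_n,m)\leq{\rm{QAIC}}(\mathbb{X}_n,m^*)\}$, and since there are only finitely many models a union bound yields ${\bf{P}}(\hat{m}_n\in\mathcal{M}^c)\to0$. The only genuinely probabilistic inputs are $\mathbb{Q}_{\mathbb{XX}}\stackrel{p}{\longrightarrow}{\bf{\Sigma}}_0$ and the consistency $\hat{\theta}_{m,n}\stackrel{p}{\longrightarrow}\bar{\theta}_m$; once these are in hand the argument is deterministic, and the step requiring the most care is promoting pointwise to uniform convergence of $\mathbb{H}_{m,n}$ so that the estimator $\hat{\theta}_{m,n}$ may be substituted. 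The strict KL-type separation is precisely what makes the $O(n)$ gap dominate the $O(1)$ penalty difference $2(q_m-q_{m^*})$.
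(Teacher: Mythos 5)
Your proposal is correct and takes essentially the same route as the paper: fix $m^*\in\mathcal{M}$, apply a union bound over the finitely many misspecified models, and show that $n^{-1}$ times each QAIC difference converges in probability to the strictly positive constant $2\bigl({\rm{H}}_{m^*}(\theta_{m^*,0})-{\rm{H}}_{m}(\bar{\theta}_{m})\bigr)$, the positivity coming from the unique maximization of ${\rm{G}}$ at ${\bf{\Sigma}}_0$. The only differences are cosmetic: you prove in-line what the paper delegates to Lemma \ref{mis} and to companion-paper lemmas, namely the uniform convergence of the normalized quasi-log-likelihood (which you obtain directly from its linearity in $\mathbb{Q}_{\mathbb{XX}}$ together with $\mathbb{Q}_{\mathbb{XX}}\stackrel{p}{\longrightarrow}{\bf{\Sigma}}_0$) and the eigenvalue argument showing ${\rm{G}}$ is uniquely maximized at ${\bf{\Sigma}}_0$.
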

Theorem \ref{misstheorem} shows that the probability of choosing the misspecified models converges to zero as $n\longrightarrow\infty$.
\section{Simulation results}
\subsection{True model}
The stochastic process $\mathbb{X}_{1,0,t}$ is defined by the following factor model
:
\begin{align*}
    \mathbb{X}_{1,0,t}=\begin{pmatrix}
    1 & 5 & 2 & 0 & 0 & 0\\
    0 & 0 & 0 & 1 & 4 & 7
    \end{pmatrix}^{\top}
    \xi_{0,t}+\delta_{0,t},
\end{align*}
where $\{\mathbb{X}_{1,0,t}\}_{t\geq 0}$ is a six-dimensional observable vector process, $\{\xi_{0,t}\}_{t\geq 0}$ is a two-dimensional latent common factor vector process, and $\{\delta_{0,t}\}_{t\geq 0}$ is a six-dimensional latent unique factor vector process. The stochastic process $\mathbb{X}_{2,0,t}$ is 
defined by the factor model as follows:
\begin{align*}
    \mathbb{X}_{2,0,t}=\begin{pmatrix}
    1\\
    2
    \end{pmatrix}\eta_{0,t}+\varepsilon_{0,t},
\end{align*}
where $\{\mathbb{X}_{2,0,t}\}_{t\geq 0}$ is a two-dimensional observable vector process, $\{\eta_{0,t}\}_{t\geq 0}$ is a one-dimensional latent common factor vector process, and $\{\varepsilon_{0,t}\}_{t\geq 0}$ is a two-dimensional latent unique factor vector process. Furthermore, the relationship between $\eta_{0,t}$ and  $\xi_{0,t}$ is expressed as follows:
\begin{align*}
    \eta_{0,t}=\begin{pmatrix}
    3 & 2
    \end{pmatrix}\xi_{0,t}+\zeta_{0,t},
\end{align*}
where $\{\zeta_{0,t}\}_{t\geq 0}$ is a one-dimensional latent unique factor vector process. 
It is supposed that $\{\xi_{0,t}\}_{t\geq 0}$ is the two-dimensional OU process as follows:
\begin{align*}
    \quad\dd \xi_{0,t}=-\left\{\begin{pmatrix}
    1 & 0.7\\
    0.7 & 0.5
    \end{pmatrix}\xi_{0,t}-\begin{pmatrix}
    1\\
    2
    \end{pmatrix}\right\}\dd t+\begin{pmatrix}
    1 & 0.3\\
    0.4 & 1
    \end{pmatrix}\dd W_{1,t}\ \ (t\in [0,T]),\ \
    \xi_{0,0}=\begin{pmatrix}
    2\\
    1
    \end{pmatrix},
\end{align*}
where $W_{1,t}$ is a two-dimensional standard Wiener process. $\{\delta_{0,t}\}_{t\geq 0}$ is defined by the six-dimensional OU process as follows:
\begin{align*}
    \quad\dd\delta_{0,t}=-\bigl(B_0\delta_{0,t}-\mu_0\bigr)
    dt+{\bf{S}}_{2,0}\dd W_{2,t}\ \ (t\in [0,T]),\ \ 
    \delta_{0,0}=c_0,
\end{align*}
where $B_0={\rm{Diag}}(3,2,4,1,2,1)^{\top}$, $\mu_0=(3,2,1,2,6,4)^{\top}$, ${\bf{S}}_{2,0}={\rm{Diag}}(3,2,1,2,1,3)^{\top}$, $c_0=(1,3,2,1,4,3)^{\top}$ and $W_{2,t}$ is a six-dimensional standard Wiener process. $\{\varepsilon_{0,t}\}_{t\geq 0}$ 
satisfies the following two-dimensional OU process:
\begin{align*}
    \quad\dd\varepsilon_{0,t}=-\left\{\begin{pmatrix}
    1 & 0 \\
    0 & 3
    \end{pmatrix}\varepsilon_{0,t}-\begin{pmatrix}
    2\\
    3
    \end{pmatrix}\right\}\dd t+\begin{pmatrix}
    1 & 0\\
    0 & 2
    \end{pmatrix}\dd W_{3,t}\ \ (t\in [0,T]),\ \ \varepsilon_{0,0}=\begin{pmatrix}
    1\\
    5
    \end{pmatrix},
\end{align*}
where $W_{3,t}$ is a two-dimensional standard Wiener process. $\{\zeta_{0,t}\}_{t\geq 0}$ is defined by the following one-dimensional OU process:
\begin{align*}
    \quad\dd\zeta_{0,t}=-\zeta_{0,t}\dd t+2\dd W_{4,t}\ \ (t\in [0,T]),\ \ 
    \zeta_{0,0}=0,
\end{align*}
where $W_{4,t}$ is a one-dimensional standard Wiener process. Figure \ref{truemodel} shows the path diagram of the true model at time $t$.
\begin{figure}[h]
    \includegraphics[width=0.9\columnwidth]{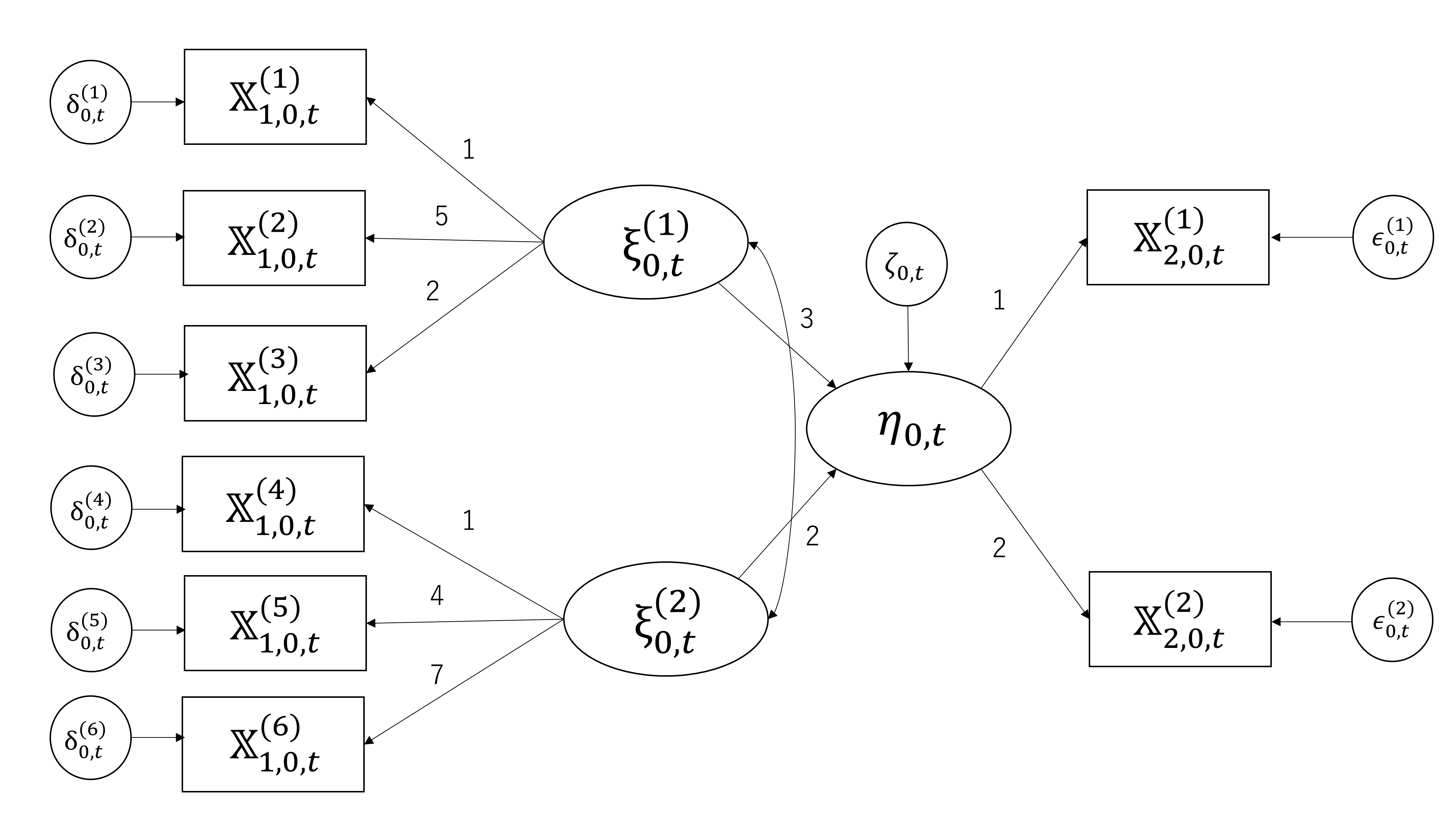}
    \caption{The path diagram of the true model at time $t$.}\label{truemodel}
\end{figure}
\subsection{Competing models}
\subsubsection{Model 1}
Set the parameter as $\theta_1\in\Theta_1\subset\mathbb{R}^{19}$. Let $p_1=6$, $p_2=2$, $k_{1}=2$ and $k_{2}=1$. Assume
\begin{align*}
    {\bf{\Lambda}}^{\theta}_{1,x_1}&=\begin{pmatrix}
    1 & \theta^{(1)}_1 & 
    \theta^{(2)}_1 & 0 & 0 & 0\\
    0 & 0 & 0 & 1 & \theta^{(3)}_1
    & \theta^{(4)}_1
    \end{pmatrix}^{\top}
\end{align*}
and
\begin{align*}
    {\bf{\Lambda}}^{\theta}_{1,x_2}=\begin{pmatrix}
    1 &
    \theta_1^{(5)}
    \end{pmatrix}^{\top},\quad
    {\bf{\Gamma}}^{\theta}_{1}=\begin{pmatrix}
    \theta_1^{(6)} & \theta_1^{(7)}
    \end{pmatrix},
\end{align*}
where $\theta_1^{(1)}$, $\theta_1^{(2)}$, $\theta_1^{(3)}$, $\theta_1^{(4)}$, $\theta_1^{(5)}$, $\theta_1^{(6)}$ and $\theta_1^{(7)}$ are not zero. 
It is supposed that ${\bf{S}}^{\theta}_{1,1}$ and ${\bf{S}}^{\theta}_{2,1}$ satisfy
\begin{align*}
    &\qquad\qquad\qquad{\bf{\Sigma}}^{\theta}_{1,\xi\xi}={\bf{S}}^{\theta}_{1,1}{\bf{S}}^{\theta\top}_{1,1}=\begin{pmatrix}
    \theta_1^{(8)} & \theta_1^{(9)}\\
    \theta_1^{(9)} & \theta_1^{(10)}
    \end{pmatrix}\in\mathcal{M}_2^{++}
\end{align*}
and
\begin{align*}
    &{\bf{\Sigma}}^{\theta}_{1,\delta\delta}={\bf{S}}^{\theta}_{2,1}{\bf{S}}^{\theta\top}_{2,1}={\rm{Diag}}\Bigl(\theta_1^{(11)},\theta_1^{(12)},
    \theta_1^{(13)},\theta_1^{(14)},\theta_1^{(15)},\theta_1^{(16)}
    \Bigr)\in\mathcal{M}_6^{++},
\end{align*}
where $\theta_1^{(9)}$ is not zero. Moreover, ${\bf{S}}^{\theta}_{3,1}$ and ${\bf{S}}^{\theta}_{4,1}$ are assumed to satisfy
\begin{align*}
    &\qquad\qquad\qquad{\bf{\Sigma}}^{\theta}_{1,\varepsilon\varepsilon}={\bf{S}}^{\theta}_{3,1}{\bf{S}}^{\theta\top}_{3,1}=\begin{pmatrix}
    \theta_1^{(17)} & 0\\
    0 & \theta_1^{(18)}
    \end{pmatrix}\in\mathcal{M}_2^{++}
\end{align*}
and ${\bf{\Sigma}}^{\theta}_{1,\zeta\zeta}=({\bf{S}}^{\theta }_{4,1})^2=\theta_1^{(19)}>0$. Set 
\begin{align*}
    \theta_{1,0}=\Bigl(5,2,4,7,2,3,2,1.09,0.70,1.16,9,4,1,4,1,9,1,4,4\Bigr).
\end{align*}
It holds that ${\bf{\Sigma}}_0={\bf{\Sigma}}_1(\theta_{1,0})$, so that Model $1$ is a correctly specified model. There exists a constant $\chi>0$ such that
\begin{align}
    {\rm{Y}}_1(\theta_1)\leq -\chi|\theta_1-\theta_{1,0}|^2 \label{Y1iden}
\end{align}
for all $\theta_1\in\Theta_1$. For the proof of (\ref{Y1iden}), see Appendix \ref{iden}.
Figure \ref{Model1} shows the path diagram of Model $1$ at time $t$.
\begin{figure}[h]
    \includegraphics[width=0.9\columnwidth]{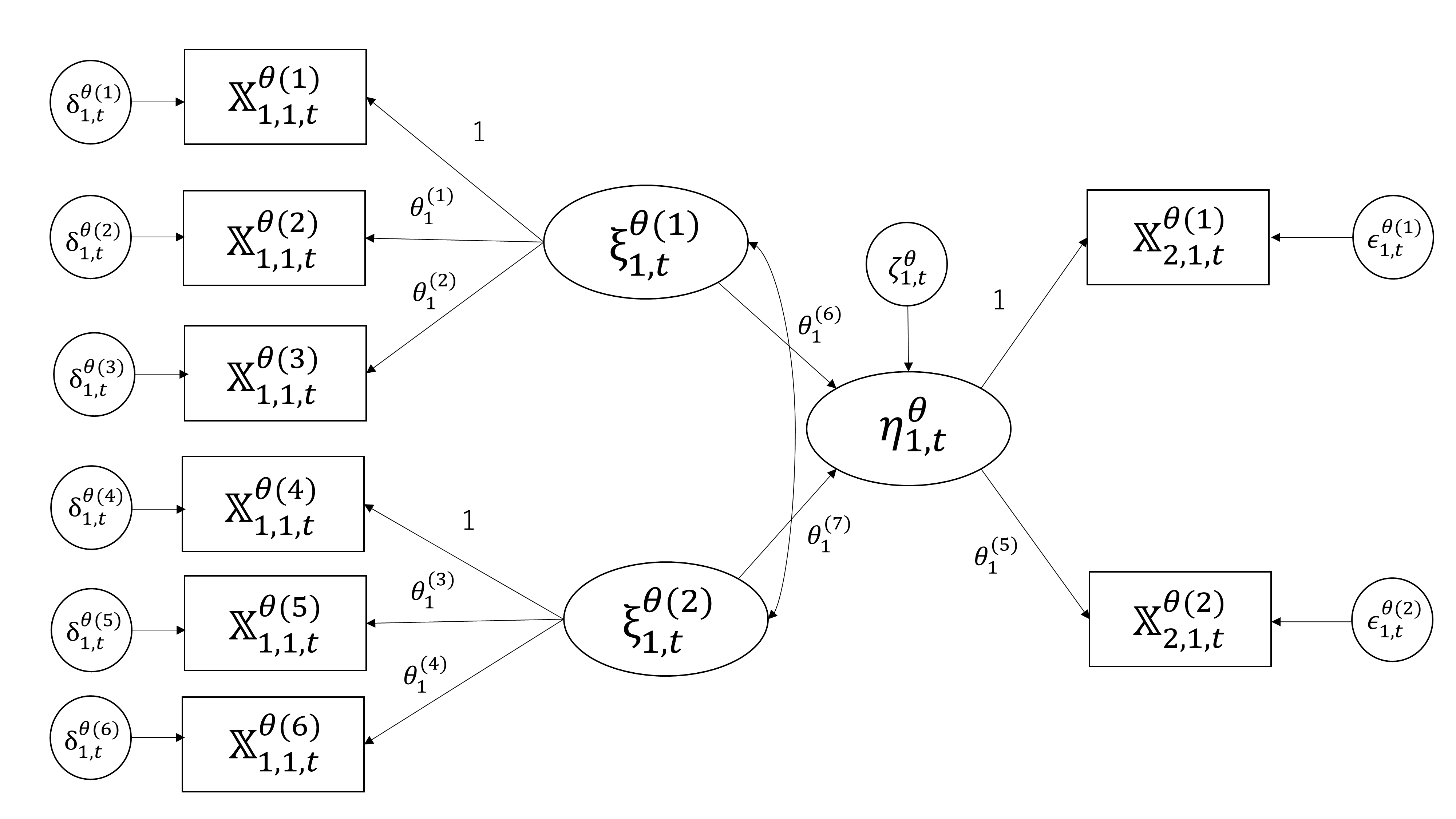}
    \caption{The path diagram of Model 1.} \label{Model1}
\end{figure}
\subsubsection{Model 2}
The parameter is defined as $\theta_2\in\Theta_2\subset\mathbb{R}^{20}$. Set $p_1=6$, $p_2=2$, $k_{1}=2$ and $k_{2}=1$. Suppose
\begin{align*}
    {\bf{\Lambda}}^{\theta}_{2,x_1}&=\begin{pmatrix}
    1 &
    \theta^{(1)}_2 &
    \theta^{(2)}_2 &
    0 &
    0 & 0\\
    0 & 0 & \theta^{(3)}_2 & 1 
    & \theta^{(4)}_2
    & \theta^{(5)}_2
    \end{pmatrix}^{\top}
\end{align*}
and
\begin{align*}
    {\bf{\Lambda}}^{\theta}_{2,x_2}=\begin{pmatrix}
    1 & \theta_2^{(6)}
    \end{pmatrix}^{\top},\quad {\bf{\Gamma}}^{\theta}_{2}=\begin{pmatrix}
    \theta_2^{(7)} & \theta_2^{(8)}
    \end{pmatrix},
\end{align*}
where $\theta_2^{(1)}$, $\theta_2^{(2)}$, $\theta_2^{(3)}$, $\theta_2^{(4)}$, $\theta_2^{(5)}$, $\theta_2^{(6)}$, $\theta_2^{(7)}$ and $\theta_2^{(8)}$ are not zero. ${\bf{S}}^{\theta}_{1,2}$ and ${\bf{S}}^{\theta}_{2,2}$ are assumed to satisfy
\begin{align*}
    {\bf{\Sigma}}^{\theta}_{2,\xi\xi}&={\bf{S}}^{\theta}_{1,2}{\bf{S}}^{\theta\top}_{1,2}=\begin{pmatrix}
    \theta_2^{(9)} & \theta_2^{(10)}\\
    \theta_2^{(10)} & \theta_2^{(11)}
    \end{pmatrix}\in\mathcal{M}_2^{++}
\end{align*}
and
\begin{align*}
    {\bf{\Sigma}}^{\theta}_{2,\delta\delta}&={\bf{S}}^{\theta}_{2,2}{\bf{S}}^{\theta\top}_{2,2}={\rm{Diag}}\Bigl(\theta_2^{(12)},\theta_2^{(13)},
    \theta_2^{(14)},\theta_2^{(15)},\theta_2^{(16)},\theta_2^{(17)}
    \Bigr)\in\mathcal{M}_6^{++},
\end{align*}
where $\theta_2^{(10)}$ is not zero. Furthermore, we suppose that ${\bf{S}}^{\theta}_{3,2}$ and ${\bf{S}}^{\theta}_{4,2}$ satisfy
\begin{align*}
    {\bf{\Sigma}}^{\theta}_{2,\varepsilon\varepsilon}&={\bf{S}}^{\theta}_{3,2}{\bf{S}}^{\theta\top}_{3,2}=\begin{pmatrix}
    \theta_2^{(18)} & 0\\
    0 & \theta_2^{(19)}
    \end{pmatrix}\in\mathcal{M}_2^{++}
\end{align*}
and ${\bf{\Sigma}}^{\theta}_{2,\zeta\zeta}=({\bf{S}}^{\theta}_{4,2})^2=\theta_2^{(20)}>0$. Let 
\begin{align*}
    \theta_{2,0}=\Bigl(5,2,0,4,7,2,3,2,1.09,0.70,1.16,9,4,1,4,1,9,1,4,4\Bigr).
\end{align*}
Since ${\bf{\Sigma}}_0={\bf{\Sigma}}_2(\theta_{2,0})$, Model $2$ is a correctly specified model. In a similar way to the proof of (\ref{Y1iden}), we can prove that there exists a constant $\chi>0$ such that
\begin{align*}
    {\rm{Y}}_2(\theta_2)\leq -\chi|\theta_2-\theta_{2,0}|^2
\end{align*}
for all $\theta_2\in\Theta_2$. Figure \ref{Model2} shows the path diagram of Model $2$ at time $t$.
\begin{figure}[h]
    \includegraphics[width=0.9\columnwidth]{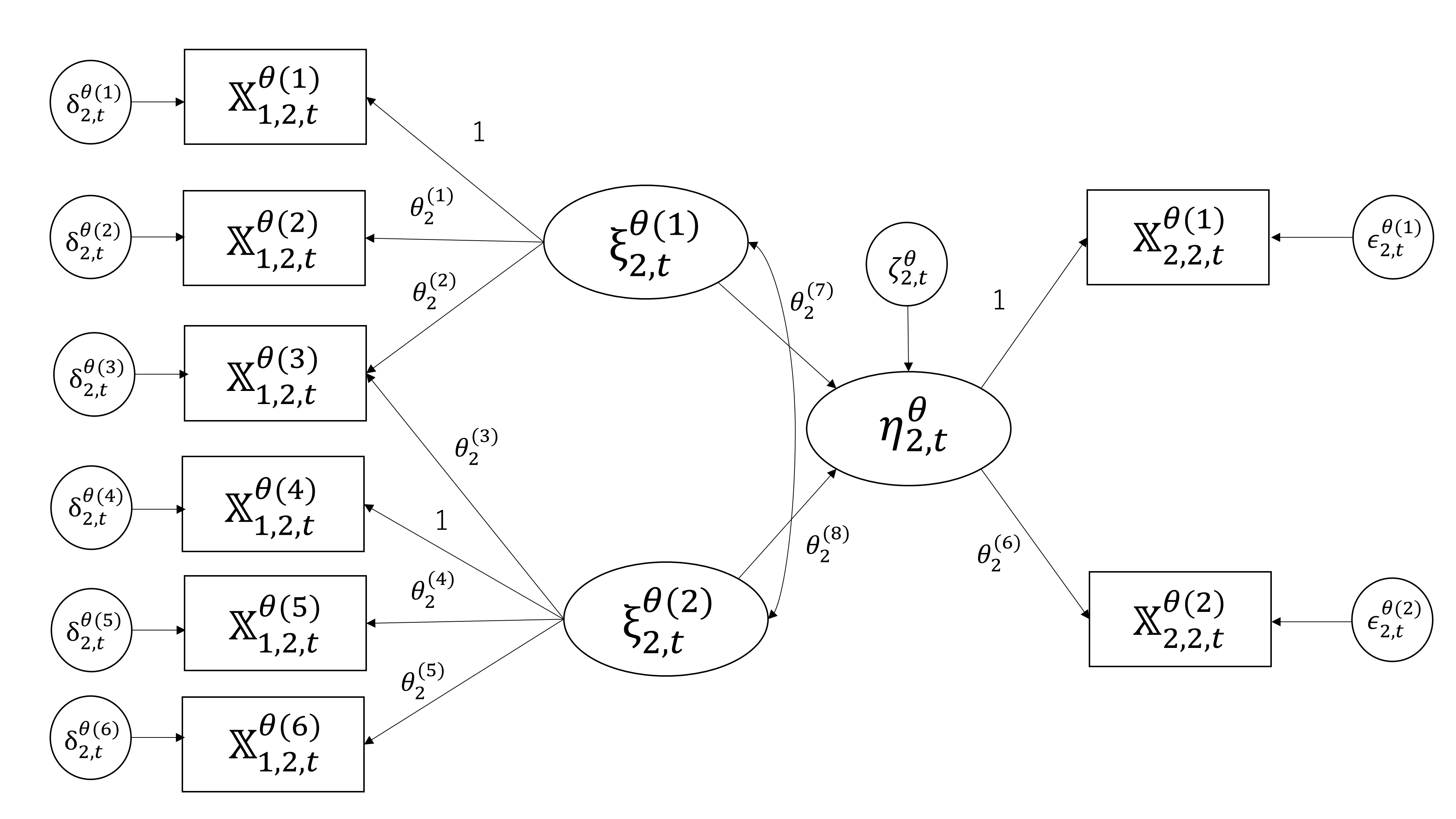}
    \caption{The path diagram of Model 2.}\label{Model2}
\end{figure}
\subsubsection{Model 3} 
Set the parameter as $\theta_3\in\Theta_3\subset\mathbb{R}^{17}$. Let $p_1=6$, $p_2=2$, $k_{1}=1$ and $k_{2}=1$. Assume 
\begin{align*}
    {\bf{\Lambda}}^{\theta}_{3,x_1}&=\begin{pmatrix}
    1 &
    \theta^{(1)}_3 &
    \theta^{(2)}_3 &
    \theta^{(3)}_3 &
    \theta^{(4)}_3 &
    \theta^{(5)}_3 
    \end{pmatrix}^{\top}
\end{align*}
and
\begin{align*}
    {\bf{\Lambda}}^{\theta}_{3,x_2}=\begin{pmatrix}
    1 &
    \theta_3^{(6)}
    \end{pmatrix}^{\top},\quad
    {\bf{\Gamma}}^{\theta}_{3}=
    \theta_3^{(7)},
\end{align*}
where $\theta^{(1)}_3$, $\theta^{(2)}_3$, $\theta^{(3)}_3$, $\theta^{(4)}_3$, $\theta^{(5)}_3$, $\theta^{(6)}_3$ and $\theta^{(7)}_3$ are not zero. We assume that ${\bf{S}}^{\theta}_{1,3}$ and ${\bf{S}}^{\theta}_{2,3}$ satisfy ${\bf{\Sigma}}^{\theta}_{3,\xi\xi}=({\bf{S}}^{\theta}_{1,3})^2=\theta_3^{(8)}>0$ and
\begin{align*}
    {\bf{\Sigma}}^{\theta}_{3,\delta\delta}&={\bf{S}}^{\theta}_{2,3}{\bf{S}}^{\theta\top}_{2,3}
    ={\rm{Diag}}\Bigl(\theta_3^{(9)},\theta_3^{(10)},
    \theta_3^{(11)},\theta_3^{(12)},\theta_3^{(13)},\theta_3^{(14)}
    \Bigr)\in\mathcal{M}_6^{++}.
\end{align*}
Moreover, it is supposed that ${\bf{S}}^{\theta}_{3,3}$ and ${\bf{S}}^{\theta}_{4,3}$ satisfy
\begin{align*}
    {\bf{\Sigma}}^{\theta}_{3,\varepsilon\varepsilon}&={\bf{S}}^{\theta}_{3,3}{\bf{S}}^{\theta\top}_{3,3}=\begin{pmatrix}
    \theta_3^{(15)} & 0\\
    0 & \theta_3^{(16)}
    \end{pmatrix}\in\mathcal{M}_2^{++}
\end{align*}
and ${\bf{\Sigma}}^{\theta}_{3,\zeta\zeta}=({\bf{S}}^{\theta}_{4,3})^2=\theta_3^{(17)}>0$. For any $\theta_3\in\Theta_3$,
one has ${\bf{\Sigma}}_0\neq{\bf{\Sigma}}_3(\theta_{3})$, so that Model $3$ is a misspecified model. Figure \ref{Model3} shows the path diagram of Model $3$ at time $t$.
\begin{figure}[h]
    \includegraphics[width=0.9\columnwidth]{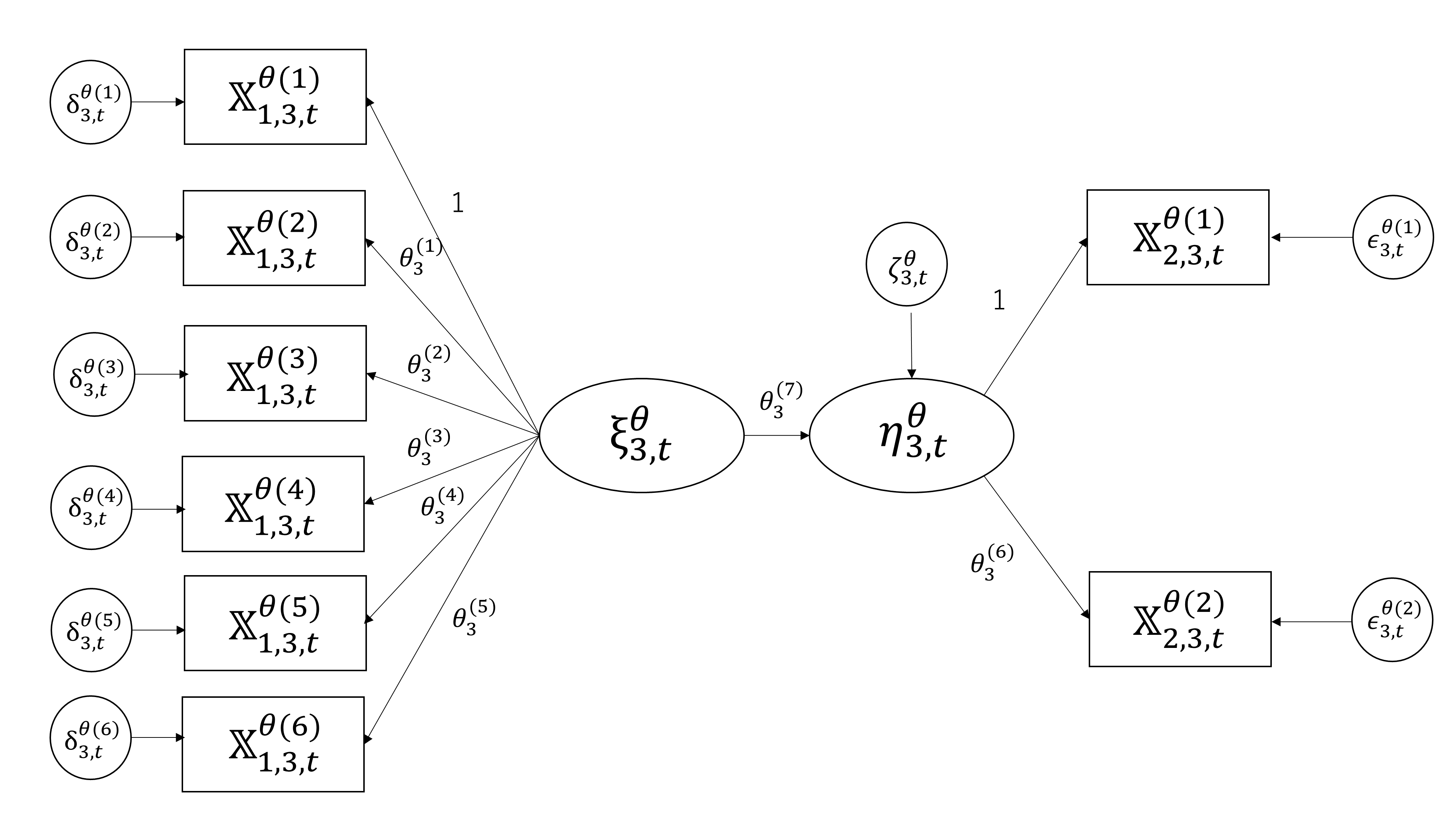}
    \caption{The path diagram of Model 3.}\label{Model3}
\end{figure}
\subsection{Simulation results}
In the simulation, we use optim() with the BFGS method in R language. The initial parameter is chosen as $\theta=\theta_0$. The number of iterations is 10000. Set $T=1$ and consider the case where $n=10^2, 10^3, 10^4, 10^5$. Table \ref{table} shows the number of models selected by QAIC. Since Model 3 is not selected, this simulation result implies that Theorem \ref{misstheorem} seems to be correct in this example. Furthermore, we see from this result that QAIC does not have consistency. In other words, the over-fitted model (Model 2) is selected with significant probability. This result is natural since QAIC chooses the best model in terms of prediction.
}
\begin{table}[h]
    \ \\ \ \\ \ \\ \ \\ \ \\ \ \\
    \centering
    \begin{tabular}{ccccc}
    & $n=10^2$ & $n=10^3$ & $n=10^4$ & $n=10^5$ \\\hline
    Model 1 & 8394 & 8417 & 8461 & 8410 \\
    Model 2 & 1606 & 1583 & 1539 & 1590 \\
    Model 3 & 0 & 0 & 0 & 0
\end{tabular}
    \caption{The number of models selected by QAIC.}\label{table}
\end{table}
\newpage
\clearpage
\section{proof}
In this section, we may omit the model index ``$m$", 
and we use $\hat{\theta}_{n}$ instead of $\hat{\theta}_{n}(\mathbb{X}_n)$. 
Moreover, we simply write $\mathbb{X}_{1,0,t}$, $\mathbb{X}_{2,0,t}$, $\xi_{0,t}$, $\delta_{0,t}$, $\varepsilon_{0,t}$ and $\zeta_{0,t}$ as $\mathbb{X}_{1,t}$, $\mathbb{X}_{2,t}$, $\xi_{t}$, $\delta_{t}$, $\varepsilon_{t}$ and $\zeta_{t}$, respectively. For any process $Y_t$ and $\ell\geq 0$, 
we set ${\rm{R}}_{i}(h_n^{\ell},Y)={\rm{R}}(h_n^{\ell},Y_{t_{i}^n})$. Without loss of generality, we suppose that $T=1$. Set
\begin{align*}
    \mathscr{F}^{n}_{i}
    =\sigma\bigl(W_{1,s},W_{2,s},W_{3,s},W_{4,s},s\leq t_{i}^n\bigr)
\end{align*}
for $i=0,\cdots, n$. Let
\begin{align*}
    {\rm{H}}_n(\mathbb{X}_n,\theta)&=\log(2\pi h_n)^{\frac{np}{2}}{\rm{L}}_n(\mathbb{X}_n,\theta).
\end{align*}
Set ${\bf{I}}(\theta_0)=\Delta^{\top}_{0}{\bf{W}}(\theta_0)^{-1}\Delta_{0}$, where 
\begin{align*}
    {\bf{W}}(\theta_0)=2\mathbb{D}^{+}_{p}\bigl({\bf{\Sigma}}(\theta_0)\otimes{\bf{\Sigma}}(\theta_0)\bigr)\mathbb{D}^{+\top}_{p}.
\end{align*}
Define the random field ${\rm{Y}}_n:$
\begin{align*}
    {\rm{Y}}_n(\mathbb{X}_n,\theta;\theta_0)=\frac{1}{n}\biggl\{{\rm{H}}_n(\mathbb{X}_n,\theta)-{\rm{H}}_n(\mathbb{X}_n,\theta_0)\biggr\}.
\end{align*}
Let ${\rm{Z}}_n$ be the random field as follows:
\begin{align*}
    {\rm{Z}}_n(\mathbb{X}_n,u;\theta_0)=\exp\Biggl\{{\rm{H}}_n\biggl(\mathbb{X}_n,\theta_0+\frac{1}{\sqrt{n}}u\biggr)-{\rm{H}}_n(\mathbb{X}_n,\theta_0)\Biggr\}
\end{align*}
for $u\in\mathbb{U}_n$, where
\begin{align*}
    {\mathbb{U}}_{n}=\left\{u\in\mathbb{R}^{q}:\  \theta_{0}+\frac{1}{\sqrt{n}}u\in\Theta\right\}.
\end{align*}
Set ${\rm{V}}_{n}(r)=\bigl\{u\in{\mathbb{U}}_{n}: r\leq |u|\bigr\}$ and $\hat{u}_n=\sqrt{n}(\hat{\theta}_n-\theta_0)$. ${\bf{V}}_{\mathbb{X}_n}$ denotes the variance under the law of $\mathbb{X}_n$. Write $\partial_{\theta}=\partial/\partial\theta$ and $\partial^2_{\theta}=\partial_{\theta}\partial^{\top}_{\theta}$. Define $\zeta$ as a $q$-dimensional standard normal random variable.
\begin{lemma}\label{thetaprob1}
Under {\bf{[A]}}, as $n\longrightarrow\infty$,
\begin{align*}
    \frac{1}{\sqrt{n}}\partial_{\theta}{\rm{H}}_n(\mathbb{X}_n,\theta_0) 
    &\stackrel{d}{\longrightarrow}{\bf{I}}(\theta_{0})^{\frac{1}{2}}\zeta
\end{align*}
and
\begin{align*}
    \quad\frac{1}{n}\partial^2_{\theta}{\rm{H}}_n(\mathbb{X}_n,\theta_0)&\stackrel{p}{\longrightarrow}-{\bf{I}}(\theta_0). 
\end{align*}
\end{lemma}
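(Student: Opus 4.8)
The plan is to put ${\rm{H}}_n$ in closed form and reduce both statements to the limit behaviour of the realized covariance $\mathbb{Q}_{\mathbb{XX}}$. Since $T=1$ and $h_n=1/n$, substituting the quasi-likelihood into ${\rm{H}}_n(\mathbb{X}_n,\theta)=\log\bigl((2\pi h_n)^{np/2}{\rm{L}}_n(\mathbb{X}_n,\theta)\bigr)$ and using $\sum_{i=1}^n\Delta_i\mathbb{X}(\Delta_i\mathbb{X})^{\top}=\mathbb{Q}_{\mathbb{XX}}$ gives
\begin{align*}
    {\rm{H}}_n(\mathbb{X}_n,\theta)=-\frac{n}{2}\Bigl\{\log\det{\bf{\Sigma}}(\theta)+\tr\bigl({\bf{\Sigma}}(\theta)^{-1}\mathbb{Q}_{\mathbb{XX}}\bigr)\Bigr\}.
\end{align*}
Because all loading and diffusion matrices are constant, $\mathbb{X}$ is a diffusion whose instantaneous covariance is the constant matrix ${\bf{\Sigma}}_0$, and the two probabilistic inputs I would use are the consistency $\mathbb{Q}_{\mathbb{XX}}\stackrel{p}{\longrightarrow}{\bf{\Sigma}}_0$ and the central limit theorem $\sqrt{n}\,\vech\bigl(\mathbb{Q}_{\mathbb{XX}}-{\bf{\Sigma}}_0\bigr)\stackrel{d}{\longrightarrow}N\bigl(0,{\bf{W}}(\theta_0)\bigr)$; these come from the high-frequency theory in Kusano and Uchida \cite{Kusano(2023)}.

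For the first assertion I would differentiate. Using $\partial_{\theta^{(j)}}\log\det{\bf{\Sigma}}=\tr\bigl({\bf{\Sigma}}^{-1}\partial_{\theta^{(j)}}{\bf{\Sigma}}\bigr)$ and $\partial_{\theta^{(j)}}{\bf{\Sigma}}^{-1}=-{\bf{\Sigma}}^{-1}\bigl(\partial_{\theta^{(j)}}{\bf{\Sigma}}\bigr){\bf{\Sigma}}^{-1}$, and evaluating at $\theta_0$ where ${\bf{\Sigma}}(\theta_0)={\bf{\Sigma}}_0$, one obtains
\begin{align*}
    \partial_{\theta^{(j)}}{\rm{H}}_n(\mathbb{X}_n,\theta_0)=-\frac{n}{2}\tr\Bigl({\bf{\Sigma}}_0^{-1}\bigl(\partial_{\theta^{(j)}}{\bf{\Sigma}}\bigr){\bf{\Sigma}}_0^{-1}\bigl({\bf{\Sigma}}_0-\mathbb{Q}_{\mathbb{XX}}\bigr)\Bigr).
\end{align*}
Converting each trace into half-vectorized form through $\tr\bigl({\bf{\Sigma}}_0^{-1}A{\bf{\Sigma}}_0^{-1}B\bigr)=\vech(A)^{\top}\mathbb{D}_{p}^{\top}\bigl({\bf{\Sigma}}_0^{-1}\otimes{\bf{\Sigma}}_0^{-1}\bigr)\mathbb{D}_{p}\vech(B)$ for symmetric $A,B$, stacking over $j$, and applying the duplication-matrix identity $\mathbb{D}_{p}^{\top}\bigl({\bf{\Sigma}}_0^{-1}\otimes{\bf{\Sigma}}_0^{-1}\bigr)\mathbb{D}_{p}=2{\bf{W}}(\theta_0)^{-1}$ (see, e.g., Harville \cite{Harville(1998)}), I would get
\begin{align*}
    \frac{1}{\sqrt{n}}\partial_{\theta}{\rm{H}}_n(\mathbb{X}_n,\theta_0)=\Delta_0^{\top}{\bf{W}}(\theta_0)^{-1}\sqrt{n}\,\vech\bigl(\mathbb{Q}_{\mathbb{XX}}-{\bf{\Sigma}}_0\bigr).
\end{align*}
The central limit theorem together with the continuous mapping theorem then yields a centred normal limit with variance $\Delta_0^{\top}{\bf{W}}(\theta_0)^{-1}{\bf{W}}(\theta_0){\bf{W}}(\theta_0)^{-1}\Delta_0=\Delta_0^{\top}{\bf{W}}(\theta_0)^{-1}\Delta_0={\bf{I}}(\theta_0)$, which is the law of ${\bf{I}}(\theta_0)^{1/2}\zeta$.

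For the second assertion I would differentiate once more, writing $\partial_{\theta^{(j)}}{\rm{H}}_n=-\tfrac{n}{2}\tr\bigl({\bf{\Sigma}}^{-1}(\partial_{\theta^{(j)}}{\bf{\Sigma}})(\mathbb{I}_p-{\bf{\Sigma}}^{-1}\mathbb{Q}_{\mathbb{XX}})\bigr)$. At $\theta_0$ the factor $\mathbb{I}_p-{\bf{\Sigma}}_0^{-1}\mathbb{Q}_{\mathbb{XX}}={\bf{\Sigma}}_0^{-1}({\bf{\Sigma}}_0-\mathbb{Q}_{\mathbb{XX}})\stackrel{p}{\longrightarrow}0$ by consistency, so every term of $\frac1n\partial^2_{\theta}{\rm{H}}_n$ that still carries this factor is $o_p(1)$; only the term in which the factor is itself differentiated survives, and replacing $\mathbb{Q}_{\mathbb{XX}}$ by ${\bf{\Sigma}}_0$ gives
\begin{align*}
    \frac{1}{n}\partial^2_{\theta^{(j)}\theta^{(k)}}{\rm{H}}_n(\mathbb{X}_n,\theta_0)\stackrel{p}{\longrightarrow}-\frac{1}{2}\tr\Bigl({\bf{\Sigma}}_0^{-1}\bigl(\partial_{\theta^{(j)}}{\bf{\Sigma}}\bigr){\bf{\Sigma}}_0^{-1}\bigl(\partial_{\theta^{(k)}}{\bf{\Sigma}}\bigr)\Bigr).
\end{align*}
By the same half-vectorization identity this trace equals $2{\bf{I}}(\theta_0)_{jk}$, so the limit is $-{\bf{I}}(\theta_0)$.

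The main obstacle is establishing the two inputs for the diffusion $\mathbb{X}$ rather than for exactly Gaussian increments. Concretely, I would decompose $\Delta_i\mathbb{X}$ into its martingale part ${\bf{S}}\Delta_iW$ (with ${\bf{S}}$ the constant overall diffusion coefficient of $\mathbb{X}$, so that ${\bf{S}}{\bf{S}}^{\top}={\bf{\Sigma}}_0$) plus a drift remainder of order ${\rm{R}}(h_n,\cdot)$, show with the moment bounds and $C^{4}_{\uparrow}$ smoothness in ${\bf{[A]}}$ that the drift and cross contributions to $\mathbb{Q}_{\mathbb{XX}}$ are asymptotically negligible, and prove a martingale central limit theorem for the leading term $\sum_i{\bf{S}}\Delta_iW(\Delta_iW)^{\top}{\bf{S}}^{\top}$. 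Identifying its asymptotic covariance with ${\bf{W}}(\theta_0)=2\mathbb{D}_{p}^{+}\bigl({\bf{\Sigma}}_0\otimes{\bf{\Sigma}}_0\bigr)\mathbb{D}_{p}^{+\top}$ reduces to the standard fourth-moment (Wishart-type) computation for Gaussian increments; the remaining manipulations are routine.
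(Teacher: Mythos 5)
Your proposal is correct and follows essentially the same route as the paper: both write the score and Hessian of ${\rm{H}}_n$ at $\theta_0$ as linear/quadratic forms in $\mathbb{Q}_{\mathbb{XX}}$, invoke the consistency and CLT for $\sqrt{n}\,\vech\bigl(\mathbb{Q}_{\mathbb{XX}}-{\bf{\Sigma}}(\theta_0)\bigr)$ (Theorem 1 of Kusano and Uchida \cite{Kusano(2023)}, whose proof is exactly the martingale-plus-drift argument you sketch), and use the duplication-matrix identity $\tfrac12\mathbb{D}_p^{\top}\bigl({\bf{\Sigma}}_0^{-1}\otimes{\bf{\Sigma}}_0^{-1}\bigr)\mathbb{D}_p={\bf{W}}(\theta_0)^{-1}$ to identify the limit with ${\bf{I}}(\theta_0)^{1/2}\zeta$ and $-{\bf{I}}(\theta_0)$, respectively. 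The only cosmetic difference is that you collapse the Hessian computation via a vanishing-factor product-rule argument where the paper writes out all five trace terms and applies Slutsky.
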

\begin{lemma}\label{thetaprob2}
Under {\bf{[A]}} and {\bf{[B1]}}, as $n\longrightarrow\infty$,
\begin{align*}
    \hat{\theta}_{n}\stackrel{p}{\longrightarrow}\theta_0 
\end{align*}
and
\begin{align*}
    \sqrt{n}(\hat{\theta}_{n}-\theta_{0})&\stackrel{d}{\longrightarrow}{\bf{I}}(\theta_{0})^{-\frac{1}{2}}\zeta. 
\end{align*} 
\end{lemma}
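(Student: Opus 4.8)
The plan is to run a quasi-likelihood analysis, establishing consistency first and then asymptotic normality, with Lemma \ref{thetaprob1} supplying the limit behaviour of the score and the observed information at $\theta_0$. The starting point is the explicit form of the normalised quasi-likelihood ratio: since $T=1$ and $h_n=1/n$, a direct computation from the definition of ${\rm{H}}_n$ gives
\begin{align*}
    {\rm{Y}}_n(\mathbb{X}_n,\theta;\theta_0)=-\frac{1}{2}\log\frac{\det{\bf{\Sigma}}(\theta)}{\det{\bf{\Sigma}}(\theta_0)}-\frac{1}{2}\Bigl({\bf{\Sigma}}(\theta)^{-1}-{\bf{\Sigma}}(\theta_0)^{-1}\Bigr)\bigl[\mathbb{Q}_{\mathbb{XX}}\bigr].
\end{align*}
Because the diffusion coefficient of $\mathbb{X}_t$ is the constant matrix ${\bf{\Sigma}}_0={\bf{\Sigma}}(\theta_0)$, the realised covariation satisfies $\mathbb{Q}_{\mathbb{XX}}\stackrel{p}{\longrightarrow}{\bf{\Sigma}}_0$ under {\bf{[A]}}, which is the quadratic-variation convergence for the increments $\Delta_i\mathbb{X}$ and rests on the same moment estimates already invoked in Lemma \ref{thetaprob1}. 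Substituting this limit into the display reproduces exactly the key function ${\rm{Y}}_m$, so that ${\rm{Y}}_n(\mathbb{X}_n,\cdot\,;\theta_0)\stackrel{p}{\longrightarrow}{\rm{Y}}_m$ pointwise on $\Theta$.

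To turn this into consistency I would upgrade the pointwise limit to a uniform one. Bounding $\sup_{\theta\in\Theta}|\partial_\theta{\rm{Y}}_n(\mathbb{X}_n,\theta;\theta_0)|$ in probability, via the boundedness and smoothness of $\theta\mapsto{\bf{\Sigma}}(\theta)^{-1}$ on the compact set $\Theta$ together with the stochastic boundedness of $\mathbb{Q}_{\mathbb{XX}}$, yields equicontinuity and hence $\sup_{\theta\in\Theta}|{\rm{Y}}_n(\mathbb{X}_n,\theta;\theta_0)-{\rm{Y}}_m(\theta)|\stackrel{p}{\longrightarrow}0$. The identifiability condition {\bf{[B1]}}(a), ${\rm{Y}}_m(\theta)\leq-\chi|\theta-\theta_0|^2$, then exhibits $\theta_0$ as the unique, well-separated maximiser of the limit, and the argmax theorem gives $\hat{\theta}_n\stackrel{p}{\longrightarrow}\theta_0$.

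For asymptotic normality, consistency together with $\theta_0\in\Int\Theta$ forces $\hat{\theta}_n$ into the interior with probability tending to one, so the estimating equation $\partial_\theta{\rm{H}}_n(\mathbb{X}_n,\hat{\theta}_n)=0$ holds eventually. Expanding $\partial_\theta{\rm{H}}_n$ about $\theta_0$ gives
\begin{align*}
    0=\frac{1}{\sqrt{n}}\partial_\theta{\rm{H}}_n(\mathbb{X}_n,\theta_0)+\Biggl(\frac{1}{n}\int_0^1\partial^2_\theta{\rm{H}}_n\bigl(\mathbb{X}_n,\theta_0+s(\hat{\theta}_n-\theta_0)\bigr)\,\dd s\Biggr)\sqrt{n}(\hat{\theta}_n-\theta_0).
\end{align*}
By Lemma \ref{thetaprob1} the first term converges in distribution to ${\bf{I}}(\theta_0)^{1/2}\zeta$, while the rank condition {\bf{[B1]}}(b) ensures that ${\bf{I}}(\theta_0)=\Delta_0^\top{\bf{W}}(\theta_0)^{-1}\Delta_0$ is non-singular, since ${\bf{W}}(\theta_0)>0$ and $\rank\Delta_0=q$. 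Once the averaged Hessian is shown to converge to $-{\bf{I}}(\theta_0)$, Slutsky's theorem and the continuous mapping theorem deliver $\sqrt{n}(\hat{\theta}_n-\theta_0)\stackrel{d}{\longrightarrow}{\bf{I}}(\theta_0)^{-1}{\bf{I}}(\theta_0)^{1/2}\zeta={\bf{I}}(\theta_0)^{-1/2}\zeta$.

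The main obstacle is precisely this last point: Lemma \ref{thetaprob1} only controls $\frac{1}{n}\partial^2_\theta{\rm{H}}_n(\mathbb{X}_n,\theta_0)$ at the fixed point $\theta_0$, whereas the expansion evaluates the Hessian at the random intermediate argument $\theta_0+s(\hat{\theta}_n-\theta_0)$. To close the gap I would prove a locally uniform convergence $\sup_{|\theta-\theta_0|\leq\rho}|\frac{1}{n}\partial^2_\theta{\rm{H}}_n(\mathbb{X}_n,\theta)-{\bf{G}}(\theta)|\stackrel{p}{\longrightarrow}0$ to a continuous deterministic limit ${\bf{G}}$ with ${\bf{G}}(\theta_0)=-{\bf{I}}(\theta_0)$; consistency of $\hat{\theta}_n$ then traps the intermediate point in the ball of radius $\rho$, and continuity of ${\bf{G}}$ finishes the argument. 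Equivalently, in the quasi-likelihood-analysis formulation this uniform control is what underlies the tightness of $\hat{u}_n=\sqrt{n}(\hat{\theta}_n-\theta_0)$ through a polynomial-type large deviation estimate for ${\rm{Z}}_n$ on ${\rm{V}}_n(r)$, and it again reduces to the higher-order moment bounds for $\Delta_i\mathbb{X}$ provided by {\bf{[A]}}.
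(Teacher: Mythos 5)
Your proposal is correct and follows essentially the same route as the paper: consistency via uniform convergence of ${\rm{Y}}_n$ to ${\rm{Y}}$ (which here is immediate since ${\rm{Y}}_n-{\rm{Y}}$ factors through $\mathbb{Q}_{\mathbb{XX}}-{\bf{\Sigma}}_0$) combined with the identifiability bound in {\bf{[B1]}}(a), and asymptotic normality via the Taylor expansion of the score on the event $\{\hat{\theta}_n\in{\rm{Int}}\,\Theta\}$, convergence of the averaged Hessian along the segment to $-{\bf{I}}(\theta_0)$, and Slutsky. The "main obstacle" you flag — controlling the Hessian at the random intermediate point through locally uniform convergence plus consistency — is exactly how the paper closes that step (it delegates it to Theorem 2 of Kusano and Uchida), so your plan is complete.
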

\begin{proof}[Proofs of Lemmas \ref{thetaprob1}-\ref{thetaprob2}]
    In the same way as the proof of Theorem 2 in Kusano and Uchida \cite{Kusano(JJSD)}, we can prove the results. See also Appendix \ref{appendix lemma}.
\end{proof}
In the proofs of Lemmas \ref{EXVX}-\ref{H3lemma}, 
we simply write ${\bf{E}}_{\mathbb{X}_n}$, ${\bf{V}}_{\mathbb{X}_n}$, $\mathbb{H}_n(\mathbb{X}_n,\theta)$, ${\rm{Y}}_n(\mathbb{X}_n,\theta;\theta_0)$ and ${\rm{Z}}_n(\mathbb{X}_n,u;\theta_0)$ as ${\bf{E}}$, ${\bf{V}}$, $\mathbb{H}_n(\theta)$, ${\rm{Y}}_n(\theta;\theta_0)$ and ${\rm{Z}}_n(u;\theta_0)$, respectively. 
\begin{lemma}\label{EXVX}
Under {[\bf{A}]}, for all $L>1$,
\begin{align}
    &\qquad\qquad{\bf{E}}_{\mathbb{X}_n}\Biggl[\Bigl|\mathbb{X}_{t_i^n}-\mathbb{X}_{t_{i-1}^n}\Bigr|^{L}\Biggr]\leq C_L h_n^{\frac{L}{2}}, \label{Xine}\\
    &{\bf{E}}_{\mathbb{X}_n}\Biggl[\Bigl|\mathbb{X}_{t_i^n}-{\bf{E}}_{\mathbb{X}_n}\Bigl[\mathbb{X}_{t_{i}^n}\big|\mathscr{F}^{n}_{i-1}\Bigr]\Bigr|^{L}\Biggr]\leq C_L h_n^{\frac{L}{2}}, \label{EXine}\\
    &{\bf{E}}_{\mathbb{X}_n}\Biggl[\Bigl|{\bf{E}}_{\mathbb{X}_n}\Bigl[\mathbb{X}_{t_{i}^n}\big|\mathscr{F}^{n}_{i-1}\Bigr]-\mathbb{X}_{t_{i-1}^n}\Bigr|^{L}\Biggr]\leq C_L h_n^L \label{EXine2}
\end{align}
and
\begin{align}
    {\bf{E}}_{\mathbb{X}_n}\Biggl[\Bigl|{\bf{V}}_{\mathbb{X}_n}
    \Bigl[\mathbb{X}_{t_{i}^n}\big|\mathscr{F}^{n}_{i-1}\Bigr]\Bigr|^L\Biggr]
    \leq C_Lh_n^L. \label{Vine}
\end{align}
\end{lemma}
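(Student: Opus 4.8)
The plan is to reduce all four bounds to standard increment estimates for a single diffusion obtained by stacking the latent processes, and then to transfer them to $\mathbb{X}_t$ through the linear structure of the factor model. First I would set $U_t=(\xi_t^{\top},\delta_t^{\top},\varepsilon_t^{\top},\zeta_t^{\top})^{\top}$, which solves an SDE $\dd U_t=B(U_t)\dd t+{\bf{S}}\dd W_t$ with the stacked drift $B=(B_1^{\top},B_2^{\top},B_3^{\top},B_4^{\top})^{\top}$, the constant block-diagonal matrix ${\bf{S}}={\rm{Diag}}({\bf{S}}_{1,0},{\bf{S}}_{2,0},{\bf{S}}_{3,0},{\bf{S}}_{4,0})$, and $W=(W_1^{\top},W_2^{\top},W_3^{\top},W_4^{\top})^{\top}$ a standard Wiener process by independence. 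By {\bf{[A]}}, $B$ is globally Lipschitz and of polynomial growth, and $\sup_t{\bf{E}}[|U_t|^{\ell}]<\infty$ for all $\ell\geq 0$. Writing ${\bf{B}}_0,{\bf{\Gamma}}_0,{\bf{\Lambda}}_{x_1,0},{\bf{\Lambda}}_{x_2,0}$ for the loading matrices and using $\eta_t={\bf{\Psi}}_0^{-1}({\bf{\Gamma}}_0\xi_t+\zeta_t)$, one has $\mathbb{X}_t=AU_t$ for the constant matrix
\begin{align*}
    A=\begin{pmatrix}
    {\bf{\Lambda}}_{x_1,0} & \mathbb{I}_{p_1} & 0 & 0\\
    {\bf{\Lambda}}_{x_2,0}{\bf{\Psi}}_0^{-1}{\bf{\Gamma}}_0 & 0 & \mathbb{I}_{p_2} & {\bf{\Lambda}}_{x_2,0}{\bf{\Psi}}_0^{-1}
    \end{pmatrix}.
\end{align*}
Since $A$ is constant, increments and conditional means transform as $\Delta_i\mathbb{X}=A\Delta_iU$ and ${\bf{E}}[\mathbb{X}_{t_i^n}|\mathscr{F}^n_{i-1}]=A{\bf{E}}[U_{t_i^n}|\mathscr{F}^n_{i-1}]$, while ${\bf{V}}[\mathbb{X}_{t_i^n}|\mathscr{F}^n_{i-1}]=A{\bf{V}}[U_{t_i^n}|\mathscr{F}^n_{i-1}]A^{\top}$, so $|{\bf{V}}[\mathbb{X}_{t_i^n}|\mathscr{F}^n_{i-1}]|\leq|A|^2|{\bf{V}}[U_{t_i^n}|\mathscr{F}^n_{i-1}]|$. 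Thus it suffices to prove the four estimates with $\mathbb{X}$ replaced by $U$.

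Next I would establish (\ref{EXine2}) first, since it is the pure drift estimate. Because the stochastic integral has vanishing conditional mean,
\begin{align*}
    {\bf{E}}\Bigl[U_{t_i^n}\big|\mathscr{F}^n_{i-1}\Bigr]-U_{t_{i-1}^n}=\int_{t_{i-1}^n}^{t_i^n}{\bf{E}}\Bigl[B(U_s)\big|\mathscr{F}^n_{i-1}\Bigr]\dd s.
\end{align*}
Applying Jensen's inequality to the time integral and then conditional Jensen, the $L$-th moment is bounded by $h_n^{L-1}\int_{t_{i-1}^n}^{t_i^n}{\bf{E}}[|B(U_s)|^L]\dd s$. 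The polynomial growth of $B$ together with the uniform moment bounds from {\bf{[A]}} gives $\sup_s{\bf{E}}[|B(U_s)|^L]\leq C$, whence the bound $C_Lh_n^L$. For (\ref{EXine}) I would use the decomposition
\begin{align*}
    U_{t_i^n}-{\bf{E}}\Bigl[U_{t_i^n}\big|\mathscr{F}^n_{i-1}\Bigr]=\int_{t_{i-1}^n}^{t_i^n}\Bigl(B(U_s)-{\bf{E}}\Bigl[B(U_s)\big|\mathscr{F}^n_{i-1}\Bigr]\Bigr)\dd s+{\bf{S}}\bigl(W_{t_i^n}-W_{t_{i-1}^n}\bigr).
\end{align*}
The drift-deviation integral is $O(h_n)$ exactly as in (\ref{EXine2}), while ${\bf{S}}(W_{t_i^n}-W_{t_{i-1}^n})\sim N(0,h_n{\bf{S}}{\bf{S}}^{\top})$ has ${\bf{E}}[|{\bf{S}}\Delta_iW|^L]=C_Lh_n^{L/2}$ by the Gaussian moment formula; since $h_n^L\leq h_n^{L/2}$ for large $n$, (\ref{EXine}) follows. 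Then (\ref{Xine}) is immediate from (\ref{EXine}) and (\ref{EXine2}) via the triangle inequality and $(a+b)^L\leq 2^{L-1}(a^L+b^L)$.

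Finally (\ref{Vine}) follows from (\ref{EXine}). Writing $\tilde U=U_{t_i^n}-{\bf{E}}[U_{t_i^n}|\mathscr{F}^n_{i-1}]$, conditional Jensen and the identity $|\tilde U\tilde U^{\top}|=|\tilde U|^2$ give $|{\bf{V}}[U_{t_i^n}|\mathscr{F}^n_{i-1}]|=|{\bf{E}}[\tilde U\tilde U^{\top}|\mathscr{F}^n_{i-1}]|\leq{\bf{E}}[|\tilde U|^2|\mathscr{F}^n_{i-1}]$; raising to the power $L\geq 1$, applying conditional Jensen once more and taking expectations yields ${\bf{E}}[|{\bf{V}}[U_{t_i^n}|\mathscr{F}^n_{i-1}]|^L]\leq{\bf{E}}[|\tilde U|^{2L}]$, which is $C_Lh_n^L$ by (\ref{EXine}) applied with exponent $2L$. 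The only non-mechanical point, and the place where {\bf{[A]}} enters decisively, is that $B$ is unbounded: its moments $\sup_s{\bf{E}}[|B(U_s)|^L]$ must be controlled uniformly in $s$ through the polynomial-growth bound and the uniform moment estimates, rather than by a crude sup norm. Everything else is the textbook Itô-increment argument (cf. Kessler \cite{kessler(1997)}), simplified here by the constant diffusion coefficient, which makes the martingale part exactly Gaussian and removes any need for the Burkholder--Davis--Gundy inequality.
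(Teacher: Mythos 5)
Your proof is correct, and it takes a genuinely different, more self-contained route than the paper's. The paper argues componentwise: it splits $\Delta_i\mathbb{X}_1$ and $\Delta_i\mathbb{X}_2$ into the contributions ${\bf{\Lambda}}_{x_1,0}\Delta_i\xi$, $\Delta_i\delta$, ${\bf{\Lambda}}_{x_2,0}{\bf{\Psi}}_0^{-1}{\bf{\Gamma}}_0\Delta_i\xi$, ${\bf{\Lambda}}_{x_2,0}{\bf{\Psi}}_0^{-1}\Delta_i\zeta$ and $\Delta_i\varepsilon$, and then imports Lemmas 14--21 of Kusano and Uchida \cite{Kusano(2023)}, which supply, in the ${\rm{R}}_{i-1}$-notation, the conditional-mean, conditional-moment and conditional-covariance expansions of the latent diffusions; the four bounds are then assembled by triangle and Cauchy--Schwarz inequalities. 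You instead stack the latent processes into a single diffusion $U_t$ with globally Lipschitz drift and constant diffusion matrix, write $\mathbb{X}_t=AU_t$ with $A$ constant, and prove the estimates from scratch: conditional Fubini plus Jensen for the drift terms, exact Gaussian moments for the martingale part (the constant diffusion coefficient indeed removes any need for Burkholder--Davis--Gundy), and conditional Jensen for (\ref{Vine}); each step is sound, and {\bf{[A]}} enters exactly where you say it does. Your version buys independence from the external lemmas and a shorter, more elementary mechanism; the paper's version buys sharper output: its computation identifies the leading term of the conditional covariance, ${\bf{V}}_{\mathbb{X}_n}\bigl[\mathbb{X}_{t_i^n}\big|\mathscr{F}^n_{i-1}\bigr]=h_n{\bf{\Sigma}}(\theta_0)$ plus remainders of order $h_n^2$, and it is this expansion---not merely the crude $O(h_n^L)$ rate---that the paper later recycles to obtain (\ref{V2}), namely ${\bf{E}}_{\mathbb{X}_n}\bigl[\bigl|{\bf{V}}_{\mathbb{X}_n}[\mathbb{X}_{t_i^n}|\mathscr{F}^n_{i-1}]-h_n{\bf{\Sigma}}(\theta_0)\bigr|^L\bigr]\leq C_Lh_n^{2L}$, ``in an analogous manner'' to this proof. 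Your Jensen argument for (\ref{Vine}) cannot be upgraded to (\ref{V2}) as it stands; one would have to redo the covariance computation, e.g.\ by expanding $\tilde U\tilde U^{\top}$ through the It\^o representation and using independence of the Wiener increment. This is not a gap in your proof of the lemma as stated, but it is worth knowing which form of the argument the rest of the paper relies on.
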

\begin{proof}
First, we will prove (\ref{Xine}). Lemmas 14-15 in Kusano and Uchida \cite{Kusano(2023)} implies
\begin{align}
\begin{split}
    {\bf{E}}
    \biggl[\Delta_i\mathbb{X}^{(j)}_{1}\big|\mathscr{F}^{n}_{i-1}\biggr]
    &={\bf{E}}
    \biggl[A^{(j)}_{i,n}\big|\mathscr{F}^{n}_{i-1}\biggr]+{\bf{E}}
    \biggl[B^{(j)}_{i,n}\big|\mathscr{F}^{n}_{i-1}\biggr]\\
    &={\rm{R}}_{i-1}(h_n,\xi)+{\rm{R}}_{i-1}(h_n,\delta)
\end{split}\label{X1}
\end{align}
for $j=1,\cdots,p_1$, where 
\begin{align*}
    A_{i,n}={\bf{\Lambda}}_{x_1,0}\Delta_i\xi,\quad B_{i,n}=\Delta_i\delta.
\end{align*}
Since  
\begin{align*}
    \mathbb{X}_{2,t}&={\bf{\Lambda}}_{x_2,0}{\bf{\Psi}}_0^{-1}{\bf{\Gamma}}_0\xi_{t}+{\bf{\Lambda}}_{x_2,0}{\bf{\Psi}}_0^{-1}\zeta_{t}+\varepsilon_{t},
\end{align*}
it follows from Lemmas 16-18 in Kusano and Uchida \cite{Kusano(2023)} that
\begin{align}
\begin{split}
    {\bf{E}}
    \biggl[\Delta_i\mathbb{X}^{(k)}_{2}\big|\mathscr{F}^{n}_{i-1}\biggr]
    &={\bf{E}}
    \biggl[C^{(k)}_{i,n}\big|\mathscr{F}^{n}_{i-1}\biggr]+{\bf{E}}
    \biggl[D^{(k)}_{i,n}\big|\mathscr{F}^{n}_{i-1}\biggr]+{\bf{E}}
    \biggl[E^{(k)}_{i,n}\big|\mathscr{F}^{n}_{i-1}\biggr]\\
    &={\rm{R}}_{i-1}(h_n,\xi)+{\rm{R}}_{i-1}(h_n,\varepsilon)+{\rm{R}}_{i-1}(h_n,\zeta)
\end{split}\label{X2}
\end{align}
for $k=1,\cdots,p_2$, where
\begin{align*}
    C_{i,n}={\bf{\Lambda}}_{x_2,0}{\bf{\Psi}}_0^{-1}{\bf{\Gamma}}_0\Delta_i\xi,\quad
    D_{i,n}={\bf{\Lambda}}_{x_2,0}{\bf{\Psi}}_0^{-1}\Delta_i\zeta,\quad
    E_{i,n}=\Delta_i\varepsilon.
\end{align*}
Lemma 20 in Kusano and Uchida \cite{Kusano(2023)} shows
\begin{align*}  
    {\bf{E}}\biggl[\bigl|\Delta_i\mathbb{X}^{(j)}_{1}\bigr|^{L}\big|\mathscr{F}^{n}_{i-1}\biggr]
    &\leq C_L{\bf{E}}\biggl[\bigl|A^{(j)}_{i,n}\bigr|^L\big|\mathscr{F}^{n}_{i-1}\biggr]+C_L{\bf{E}}\biggl[\bigl|B^{(j)}_{i,n}\bigr|^L\big|\mathscr{F}^{n}_{i-1}\biggr]\\
    &\leq C_L{\rm{R}}_{i-1}(h_n^{\frac{L}{2}},\xi)+C_L{\rm{R}}_{i-1}(h_n^{\frac{L}{2}},\delta)
\end{align*}
for all $L>1$, so that
\begin{align}
\begin{split}
    {\bf{E}}\biggl[\bigl|\Delta_i\mathbb{X}^{(j)}_{1}\bigr|^{L}\biggr]&={\bf{E}}\Biggl[
    {\bf{E}}\biggl[\bigl|\Delta_i\mathbb{X}^{(j)}_{1}\bigr|^{L}\big|\mathscr{F}^{n}_{i-1}\biggr]
    \Biggr]\\
    &\leq C_L{\bf{E}}\biggl[{\rm{R}}_{i-1}(h_n^{\frac{L}{2}},\xi)\biggr]+C_L{\bf{E}}\biggl[{\rm{R}}_{i-1}(h_n^{\frac{L}{2}},\delta)\biggr]\leq C_Lh_n^{\frac{L}{2}}.\label{EX1L}
\end{split}
\end{align}
Similarly, we see from Lemma 20 in Kusano and Uchida \cite{Kusano(2023)} that
\begin{align}
\begin{split}
    {\bf{E}}\biggl[\bigl|\Delta_i\mathbb{X}^{(k)}_{2}\bigr|^{L}\biggr]\leq C_Lh_n^{\frac{L}{2}} \label{EX2L}
\end{split}
\end{align}
for any $L>1$. Thus, it holds from (\ref{EX1L}) and (\ref{EX2L}) that for all $L>1$,
\begin{align*}
   {\bf{E}}\biggl[\bigl|\Delta_i\mathbb{X}\bigr|^{L}\biggr]
   &\leq C_L\sum_{\ell=1}^p
   {\bf{E}}\biggl[\bigl|\Delta_i\mathbb{X}^{(\ell)}\bigr|^{L}\biggr]\\
   &=C_L\sum_{j=1}^{p_1}{\bf{E}}
   \biggl[\bigl|\Delta_i\mathbb{X}^{(j)}_{1}\bigr|^{L}\biggr]
   +C_L\sum_{k=1}^{p_2}{\bf{E}}
   \biggl[\bigl|\Delta_i\mathbb{X}^{(k)}_{2}\bigr|^{L}\biggr]\leq C_Lh_n^{\frac{L}{2}},
\end{align*}
which yields (\ref{Xine}). Using (\ref{X1}) and (\ref{EX1L}), one gets
\begin{align}
\begin{split}
    &\quad\ {\bf{E}}\Biggl[\Bigl|\mathbb{X}^{(j)}_{1,t_i^n}-{\bf{E}}\Bigl[\mathbb{X}^{(j)}_{1,t_{i}^n}\big|\mathscr{F}^{n}_{i-1}\Bigr]\Bigr|^{L}\Biggr]\\
    &={\bf{E}}\Biggl[\Bigl|\Delta_i\mathbb{X}^{(j)}_{1}-{\rm{R}}_{i-1}(h_n,\xi)-{\rm{R}}_{i-1}(h_n,\delta)
    \Bigr|^{L}\Biggr]\\
    &\leq C_L{\bf{E}}\biggl[\bigl|\Delta_i\mathbb{X}^{(j)}_{1}\bigr|^{L}\biggr]
    +C_L{\bf{E}}\biggl[\bigl|{\rm{R}}_{i-1}(h_n,\xi)\bigr|^{L}\biggr]+C_L{\bf{E}}\biggl[\bigl|{\rm{R}}_{i-1}(h_n,\delta)\bigr|^{L}\biggr]\\
    &\leq C_L\Bigl(h_n^{\frac{L}{2}}+h_n^L+h_n^L\Bigr)\\
    &\leq C_Lh_n^{\frac{L}{2}}
\end{split}\label{X1d}
\end{align}
for all $L>1$. In an analogous manner, (\ref{X2}) and (\ref{EX2L}) deduce
\begin{align}
\begin{split}
    {\bf{E}}\Biggl[\Bigl|\mathbb{X}^{(k)}_{2,t_i^n}-{\bf{E}}\Bigl[\mathbb{X}^{(k)}_{2,t_{i}^n}\big|\mathscr{F}^{n}_{i-1}\Bigr]\Bigr|^{L}\Biggr]
    \leq C_Lh_n^{\frac{L}{2}}
\end{split}\label{X2d}
\end{align}
for any $L>1$. Consequently, we see from (\ref{X1d}) and (\ref{X2d}) that
\begin{align*}
    {\bf{E}}\Biggl[\Bigl|\mathbb{X}_{t_i^n}-{\bf{E}}\Bigl[\mathbb{X}_{t_{i}^n}\big|\mathscr{F}^{n}_{i-1}\Bigr]\Bigr|^{L}\Biggr]
    &\leq C_L\sum_{\ell=1}^p{\bf{E}}\Biggl[\Bigl|\mathbb{X}^{(\ell)}_{t_i^n}-{\bf{E}}\Bigl[\mathbb{X}^{(\ell)}_{t_{i}^n}\big|\mathscr{F}^{n}_{i-1}\Bigr]\Bigr|^{L}\Biggr]
    \\
    &=C_L\sum_{j=1}^{p_1}{\bf{E}}\Biggl[\Bigl|\mathbb{X}^{(j)}_{1,t_i^n}-{\bf{E}}\Bigl[\mathbb{X}^{(j)}_{1,t_{i}^n}\big|\mathscr{F}^{n}_{i-1}\Bigr]\Bigr|^{L}\Biggr]
    \\
    &\quad+C_L\sum_{k=1}^{p_2}{\bf{E}}\Biggl[\Bigl|\mathbb{X}^{(k)}_{2,t_i^n}-{\bf{E}}\Bigl[\mathbb{X}^{(k)}_{2,t_{i}^n}\big|\mathscr{F}^{n}_{i-1}\Bigr]\Bigr|^{L}\Biggr]\\
    &\leq C_Lh_n^\frac{L}{2}
\end{align*}
for all $L>1$, which yields (\ref{EXine}). It follows from (\ref{X1}) that
\begin{align}
\begin{split}
    {\bf{E}}\Biggl[\Bigl|{\bf{E}}\Bigl[\mathbb{X}^{(j)}_{1,t_{i}^n}\big|\mathscr{F}^{n}_{i-1}\Bigr]-\mathbb{X}^{(j)}_{1,t_{i-1}^n}\Bigr|^{L}\Biggr]&\leq{\bf{E}}\Biggl[\Bigl|
    {\rm{R}}_{i-1}(h_n,\xi)+{\rm{R}}_{i-1}(h_n,\delta)
    \Bigr|^{L}\Biggr]\\
    &\leq C_L{\bf{E}}\biggl[\bigl|{\rm{R}}_{i-1}(h_n,\xi)\bigr|^{L}\biggr]+C_L{\bf{E}}\biggl[\bigl|{\rm{R}}_{i-1}(h_n,\delta)\bigr|^{L}\biggr]\\
    &\leq C_L h_n^L
\end{split}\label{X1d1}
\end{align}
for any $L>1$. In a similar way, (\ref{X2}) implies
\begin{align}
    {\bf{E}}\Biggl[\Bigl|{\bf{E}}\Bigl[\mathbb{X}^{(k)}_{2,t_{i}^n}\big|\mathscr{F}^{n}_{i-1}\Bigr]-\mathbb{X}^{(k)}_{2,t_{i-1}^n}\Bigr|^{L}\Biggr]&\leq C_L h_n^L \label{X2d2}
\end{align}
for all $L>1$. Hence, it holds from (\ref{X1d1}) and (\ref{X2d2}) that
\begin{align*}
    {\bf{E}}\Biggl[\Bigl|{\bf{E}}\Bigl[\mathbb{X}_{t_{i}^n}\big|\mathscr{F}^{n}_{i-1}\Bigr]-\mathbb{X}_{t_{i-1}^n}\Bigr|^{L}\Biggr]&\leq C_L\sum_{\ell=1}^p{\bf{E}}\Biggl[\Bigl|{\bf{E}}\Bigl[\mathbb{X}^{(\ell)}_{t_{i}^n}\big|\mathscr{F}^{n}_{i-1}\Bigr]-\mathbb{X}^{(\ell)}_{t_{i-1}^n}\Bigr|^{L}\Biggr]\\
    &= C_L\sum_{j=1}^{p_1}{\bf{E}}\Biggl[\Bigl|{\bf{E}}\Bigl[\mathbb{X}^{(j)}_{1,t_{i}^n}\big|\mathscr{F}^{n}_{i-1}\Bigr]-\mathbb{X}^{(j)}_{1,t_{i-1}^n}\Bigr|^{L}\Biggr]\\
    &\quad+C_L\sum_{k=1}^{p_2}{\bf{E}}\Biggl[\Bigl|{\bf{E}}\Bigl[\mathbb{X}^{(k)}_{2,t_{i}^n}\big|\mathscr{F}^{n}_{i-1}\Bigr]-\mathbb{X}^{(k)}_{2,t_{i-1}^n}\Bigr|^{L}\Biggr]\\
    &\leq C_Lh_n^L
\end{align*}
for all $L>1$, so that (\ref{EXine2}) holds. Next, we consider (\ref{Vine}).
Since it follows from (\ref{X1}) and Lemma 21 in Kusano and Uchida \cite{Kusano(2023)} that
\begin{align*}
    &\quad\ {\bf{E}}
    \Biggl[\biggl(\mathbb{X}^{(j_1)}_{1,t_{i}^n}-{\bf{E}}
    \Bigl[\mathbb{X}^{(j_1)}_{1,t_{i}^n}\big|\mathscr{F}^{n}_{i-1}\Bigr]\biggr)\biggl(\mathbb{X}^{(j_2)}_{1,t_{i}^n}-{\bf{E}}
    \Bigl[\mathbb{X}^{(j_2)}_{1,t_{i}^n}\big|\mathscr{F}^{n}_{i-1}\Bigr]\biggr)\Big|\mathscr{F}^{n}_{i-1}\Biggr]\\
    &={\bf{E}}
    \Biggl[\biggl(\mathbb{X}^{(j_1)}_{1,t_{i}^n}-\mathbb{X}^{(j_1)}_{1,t_{i}^n}-{\rm{R}}_{i-1}(h_n,\xi)-{\rm{R}}_{i-1}(h_n,\delta)\biggr)\\
    &\qquad\qquad\qquad\qquad\times\biggl(\mathbb{X}^{(j_2)}_{1,t_{i}^n}-\mathbb{X}^{(j_2)}_{1,t_{i}^n}-{\rm{R}}_{i-1}(h_n,\xi)-{\rm{R}}_{i-1}(h_n,\delta)\biggr)\Big|\mathscr{F}^{n}_{i-1}\Biggr]\\
    &={\bf{E}}
    \Biggl[\Bigl(\Delta_i\mathbb{X}^{(j_1)}_{1}\Bigr)
    \Bigl(\Delta_i\mathbb{X}^{(j_2)}_{1}
    \Bigr)\Big|\mathscr{F}^{n}_{i-1}\Biggr]\\
    &\quad+{\rm{R}}_{i-1}(h_n,\xi){\bf{E}}\biggl[\Delta_i\mathbb{X}^{(j_1)}_{1}\big|\mathscr{F}^{n}_{i-1}\biggr]
    +{\rm{R}}_{i-1}(h_n,\delta){\bf{E}}\biggl[\Delta_i\mathbb{X}^{(j_1)}_{1}\big|\mathscr{F}^{n}_{i-1}\biggr]\\
    &\quad+{\rm{R}}_{i-1}(h_n,\xi){\bf{E}}\biggl[\Delta_i\mathbb{X}^{(j_2)}_{1}\big|\mathscr{F}^{n}_{i-1}\biggr]+{\rm{R}}_{i-1}(h_n,\delta){\bf{E}}\biggl[\Delta_i\mathbb{X}^{(j_2)}_{1}\big|\mathscr{F}^{n}_{i-1}\biggr]\\
    &\quad+{\rm{R}}_{i-1}(h_n^2,\xi)+{\rm{R}}_{i-1}(h_n^2,\delta)
    +{\rm{R}}_{i-1}(h_n,\xi){\rm{R}}_{i-1}(h_n,\delta)\\
    &=h_n {\bf{\Sigma}}^{11}(\theta_0)_{j_1j_2}+{\rm{R}}_{i-1}(h_n^2,\xi)+{\rm{R}}_{i-1}(h_n^2,\delta)+{\rm{R}}_{i-1}(h_n,\xi){\rm{R}}_{i-1}(h_n,\delta)
\end{align*}
for $j_1,j_2=1,\cdots,p_1$, we see
\begin{align*}
    &\quad\ {\bf{E}}\left[\left|{\bf{E}}
    \Biggl[\biggl(\mathbb{X}^{(j_1)}_{1,t_{i}^n}-{\bf{E}}
    \Bigl[\mathbb{X}^{(j_1)}_{1,t_{i}^n}\big|\mathscr{F}^{n}_{i-1}\Bigr]\biggr)\biggl(\mathbb{X}^{(j_2)}_{1,t_{i}^n}-{\bf{E}}
    \Bigl[\mathbb{X}^{(j_2)}_{1,t_{i}^n}\big|\mathscr{F}^{n}_{i-1}\Bigr]\biggr)\Big|\mathscr{F}^{n}_{i-1}\Biggr]\right|^L\right]\\
    &\leq C_Lh_n^L\Bigl|{\bf{\Sigma}}^{11}(\theta_0)_{j_1j_2}\Bigr|^L+C_L{\bf{E}}\biggl[\bigl|{\rm{R}}_{i-1}(h_n^2,\xi)\bigr|^L\biggr]+C_L{\bf{E}}\biggl[\bigl|{\rm{R}}_{i-1}(h_n^2,\delta)\bigr|^L\biggr]\\
    &\qquad\qquad\qquad\qquad\qquad\qquad\qquad\qquad+C_L{\bf{E}}\biggl[\bigl|{\rm{R}}_{i-1}(h_n,\xi){\rm{R}}_{i-1}(h_n,\delta)\bigr|^L\biggr]\leq C_Lh_n^L
\end{align*}
for any $L>1$. In an analogous manner, one has
\begin{align*}
    &\quad\ {\bf{E}}
    \Biggl[\biggl(\mathbb{X}^{(j)}_{1,t_{i}^n}-{\bf{E}}
    \Bigl[\mathbb{X}^{(j)}_{1,t_{i}^n}\big|\mathscr{F}^{n}_{i-1}\Bigr]\biggr)\biggl(\mathbb{X}^{(k)}_{2,t_{i}^n}-{\bf{E}}
    \Bigl[\mathbb{X}^{(k)}_{2,t_{i}^n}\big|\mathscr{F}^{n}_{i-1}\Bigr]\biggr)\Big|\mathscr{F}^{n}_{i-1}\Biggr]\\
    &=h_n {\bf{\Sigma}}^{12}(\theta_0)_{jk}+{\rm{R}}_{i-1}(h_n^2,\xi)+{\rm{R}}_{i-1}(h_n,\xi){\rm{R}}_{i-1}(h_n,\delta)+{\rm{R}}_{i-1}(h_n,\xi){\rm{R}}_{i-1}(h_n,\varepsilon)\\
    &\quad+{\rm{R}}_{i-1}(h_n,\xi){\rm{R}}_{i-1}(h_n,\zeta)+{\rm{R}}_{i-1}(h_n,\delta){\rm{R}}_{i-1}(h_n,\varepsilon)+{\rm{R}}_{i-1}(h_n,\delta){\rm{R}}_{i-1}(h_n,\zeta)
\end{align*}
for $j=1,\cdots,p_1$ and $k=1,\cdots,p_2$, and
\begin{align*}
    &\quad\ {\bf{E}}
    \Biggl[\biggl(\mathbb{X}^{(k_1)}_{2,t_{i}^n}-{\bf{E}}
    \Bigl[\mathbb{X}^{(k_1)}_{2,t_{i}^n}\big|\mathscr{F}^{n}_{i-1}\Bigr]\biggr)\biggl(\mathbb{X}^{(k_2)}_{2,t_{i}^n}-{\bf{E}}
    \Bigl[\mathbb{X}^{(k_2)}_{2,t_{i}^n}\big|\mathscr{F}^{n}_{i-1}\Bigr]\biggr)\Big|\mathscr{F}^{n}_{i-1}\Biggr]\\
    &=h_n {\bf{\Sigma}}^{22}(\theta_0)_{k_1k_2}+{\rm{R}}_{i-1}(h_n^2,\xi)+{\rm{R}}_{i-1}(h_n^2,\varepsilon)+{\rm{R}}_{i-1}(h_n^2,\zeta)\\
    &\quad+{\rm{R}}_{i-1}(h_n,\xi){\rm{R}}_{i-1}(h_n,\varepsilon)+{\rm{R}}_{i-1}(h_n,\xi){\rm{R}}_{i-1}(h_n,\zeta)+{\rm{R}}_{i-1}(h_n,\varepsilon){\rm{R}}_{i-1}(h_n,\zeta)
\end{align*}
for $k_1,k_2=1,\cdots,p_2$, so that we get
\begin{align*}
    {\bf{E}}\left[\left|{\bf{E}}
    \Biggl[\biggl(\mathbb{X}^{(j)}_{1,t_{i}^n}-{\bf{E}}
    \Bigl[\mathbb{X}^{(j)}_{1,t_{i}^n}\big|\mathscr{F}^{n}_{i-1}\Bigr]\biggr)\biggl(\mathbb{X}^{(k)}_{2,t_{i}^n}-{\bf{E}}
    \Bigl[\mathbb{X}^{(k)}_{2,t_{i}^n}\big|\mathscr{F}^{n}_{i-1}\Bigr]\biggr)\Big|\mathscr{F}^{n}_{i-1}\Biggr]\right|^L\right]\leq C_Lh_n^L
\end{align*}
and
\begin{align*}
    {\bf{E}}\left[\left|{\bf{E}}
    \Biggl[\biggl(\mathbb{X}^{(k_1)}_{2,t_{i}^n}-{\bf{E}}
    \Bigl[\mathbb{X}^{(k_1)}_{2,t_{i}^n}\big|\mathscr{F}^{n}_{i-1}\Bigr]\biggr)\biggl(\mathbb{X}^{(k_2)}_{2,t_{i}^n}-{\bf{E}}
    \Bigl[\mathbb{X}^{(k_2)}_{2,t_{i}^n}\big|\mathscr{F}^{n}_{i-1}\Bigr]\biggr)\Big|\mathscr{F}^{n}_{i-1}\Biggr]\right|^L\right]\leq C_Lh_n^L
\end{align*}
for all $L>1$. Therefore, it is shown that
\begin{align*}
    &\quad\ {\bf{E}}\Biggl[\Bigl|{\bf{V}}
    \Bigl[\mathbb{X}_{t_{i}^n}\big|\mathscr{F}^{n}_{i-1}\Bigr]\Bigr|^L\Biggr]\\
    &\leq C_L\sum_{j_1=1}^{p_1}\sum_{j_2=1}^{p_1}{\bf{E}}\left[\left|{\bf{E}}
    \Biggl[\biggl(\mathbb{X}^{(j_1)}_{1,t_{i}^n}-{\bf{E}}
    \Bigl[\mathbb{X}^{(j_1)}_{1,t_{i}^n}\big|\mathscr{F}^{n}_{i-1}\Bigr]\biggr)\biggl(\mathbb{X}^{(j_2)}_{1,t_{i}^n}-{\bf{E}}
    \Bigl[\mathbb{X}^{(j_2)}_{1,t_{i}^n}\big|\mathscr{F}^{n}_{i-1}\Bigr]\biggr)\Big|\mathscr{F}^{n}_{i-1}\Biggr]\right|^L\right]\\
    &\quad+C_L\sum_{j=1}^{p_1}\sum_{k=1}^{p_2}{\bf{E}}\left[\left|{\bf{E}}
    \Biggl[\biggl(\mathbb{X}^{(j)}_{1,t_{i}^n}-{\bf{E}}
    \Bigl[\mathbb{X}^{(j)}_{1,t_{i}^n}\big|\mathscr{F}^{n}_{i-1}\Bigr]\biggr)\biggl(\mathbb{X}^{(k)}_{2,t_{i}^n}-{\bf{E}}
    \Bigl[\mathbb{X}^{(k)}_{2,t_{i}^n}\big|\mathscr{F}^{n}_{i-1}\Bigr]\biggr)\Big|\mathscr{F}^{n}_{i-1}\Biggr]\right|^L\right]\\
    &\quad+C_L\sum_{k_1=1}^{p_2}\sum_{k_2=1}^{p_2}{\bf{E}}\left[\left|{\bf{E}}
    \Biggl[\biggl(\mathbb{X}^{(k_1)}_{2,t_{i}^n}-{\bf{E}}
    \Bigl[\mathbb{X}^{(k_1)}_{2,t_{i}^n}\big|\mathscr{F}^{n}_{i-1}\Bigr]\biggr)\biggl(\mathbb{X}^{(k_2)}_{2,t_{i}^n}-{\bf{E}}
    \Bigl[\mathbb{X}^{(k_2)}_{2,t_{i}^n}\big|\mathscr{F}^{n}_{i-1}\Bigr]\biggr)\Big|\mathscr{F}^{n}_{i-1}\Biggr]\right|^L\right]\\
    &\leq C_Lh_n^L
\end{align*}
for any $L>1$, which yields (\ref{Vine}).
\end{proof}
\begin{lemma}\label{deltalemma}
Under {\bf{[A]}}, for all $L>0$,
\begin{align*}
    \sup_{n\in\mathbb{N}}{\bf{E}}_{\mathbb{X}_n}\Biggl[\biggl|\frac{1}{\sqrt{n}}\partial_{\theta^{(j)}}{\rm{H}}_n(\mathbb{X}_n,\theta_0)\biggr|^L\Biggr]<\infty
\end{align*}
for $j=1,\cdots,q$.
\end{lemma}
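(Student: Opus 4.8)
The plan is to differentiate ${\rm{H}}_n$ explicitly at $\theta_0$, split the resulting normalized sum into a martingale-difference part and a lower-order remainder, control the former by the discrete Burkholder inequality together with (\ref{Xine}), and bound the latter crudely using the conditional-moment expansions behind Lemma \ref{EXVX}. From the definition of ${\rm{L}}_n$ one has
\[
{\rm{H}}_n(\mathbb{X}_n,\theta)=-\frac{n}{2}\log\det{\bf{\Sigma}}(\theta)-\frac{1}{2h_n}\sum_{i=1}^n{\bf{\Sigma}}(\theta)^{-1}\bigl[(\Delta_i\mathbb{X})^{\otimes 2}\bigr],
\]
so, writing $\partial_j=\partial_{\theta^{(j)}}$, $A_j={\bf{\Sigma}}(\theta_0)^{-1}\bigl(\partial_j{\bf{\Sigma}}(\theta_0)\bigr){\bf{\Sigma}}(\theta_0)^{-1}$ and ${\bf{\Sigma}}_0={\bf{\Sigma}}(\theta_0)$, the identities $\partial_j\log\det{\bf{\Sigma}}=\tr({\bf{\Sigma}}^{-1}\partial_j{\bf{\Sigma}})$ and $\partial_j{\bf{\Sigma}}^{-1}=-{\bf{\Sigma}}^{-1}(\partial_j{\bf{\Sigma}}){\bf{\Sigma}}^{-1}$ give
\[
\frac{1}{\sqrt{n}}\partial_j{\rm{H}}_n(\mathbb{X}_n,\theta_0)=\frac{1}{2\sqrt{n}\,h_n}\sum_{i=1}^n\Bigl\{A_j\bigl[(\Delta_i\mathbb{X})^{\otimes 2}\bigr]-h_nA_j[{\bf{\Sigma}}_0]\Bigr\},
\]
where $A_j[{\bf{\Sigma}}_0]=\tr({\bf{\Sigma}}_0^{-1}\partial_j{\bf{\Sigma}})$. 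I would then write the $i$-th summand as $\xi_{i,n}+r_{i,n}$ with $\xi_{i,n}=\tfrac{1}{2h_n}\bigl\{A_j[(\Delta_i\mathbb{X})^{\otimes 2}]-{\bf{E}}_{\mathbb{X}_n}[A_j[(\Delta_i\mathbb{X})^{\otimes 2}]\,|\,\mathscr{F}^n_{i-1}]\bigr\}$ and $r_{i,n}=\tfrac{1}{2h_n}\bigl\{{\bf{E}}_{\mathbb{X}_n}[A_j[(\Delta_i\mathbb{X})^{\otimes 2}]\,|\,\mathscr{F}^n_{i-1}]-h_nA_j[{\bf{\Sigma}}_0]\bigr\}$, so that $\{\xi_{i,n}\}_i$ is a martingale-difference array for $\{\mathscr{F}^n_i\}$.

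For the martingale part, the discrete Burkholder inequality and the elementary bound $(\sum_i a_i)^{L/2}\leq n^{L/2-1}\sum_i a_i^{L/2}$ give, for $L\geq2$,
\[
{\bf{E}}_{\mathbb{X}_n}\Bigl[\bigl|\tfrac{1}{\sqrt{n}}\textstyle\sum_{i=1}^n\xi_{i,n}\bigr|^L\Bigr]\leq\frac{C_L}{n^{L/2}}{\bf{E}}_{\mathbb{X}_n}\Bigl[\bigl(\textstyle\sum_{i=1}^n\xi_{i,n}^2\bigr)^{L/2}\Bigr]\leq\frac{C_L}{n}\sum_{i=1}^n{\bf{E}}_{\mathbb{X}_n}\bigl[|\xi_{i,n}|^L\bigr].
\]
Since $|A_j[(\Delta_i\mathbb{X})^{\otimes 2}]|\leq|A_j|\,|\Delta_i\mathbb{X}|^2$, conditional Jensen and (\ref{Xine}) yield ${\bf{E}}_{\mathbb{X}_n}[|\xi_{i,n}|^L]\leq C_Lh_n^{-L}{\bf{E}}_{\mathbb{X}_n}[|\Delta_i\mathbb{X}|^{2L}]\leq C_L$ uniformly in $i$ and $n$, so this part is bounded uniformly in $n$.

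The main work is the remainder. Here I would show $r_{i,n}={\rm{R}}_{i-1}(h_n,\cdot)$, i.e. that ${\bf{E}}_{\mathbb{X}_n}[A_j[(\Delta_i\mathbb{X})^{\otimes 2}]\,|\,\mathscr{F}^n_{i-1}]=h_nA_j[{\bf{\Sigma}}_0]+{\rm{R}}_{i-1}(h_n^2,\cdot)$. Decomposing $\Delta_i\mathbb{X}=U_i+V_i$ with $U_i=\mathbb{X}_{t_i^n}-{\bf{E}}_{\mathbb{X}_n}[\mathbb{X}_{t_i^n}|\mathscr{F}^n_{i-1}]$ and $V_i={\bf{E}}_{\mathbb{X}_n}[\mathbb{X}_{t_i^n}|\mathscr{F}^n_{i-1}]-\mathbb{X}_{t_{i-1}^n}$, the cross terms drop under ${\bf{E}}_{\mathbb{X}_n}[\cdot|\mathscr{F}^n_{i-1}]$ and one gets ${\bf{E}}_{\mathbb{X}_n}[\Delta_i\mathbb{X}^{(k)}\Delta_i\mathbb{X}^{(l)}|\mathscr{F}^n_{i-1}]={\bf{V}}_{\mathbb{X}_n}[\mathbb{X}_{t_i^n}|\mathscr{F}^n_{i-1}]_{kl}+V_i^{(k)}V_i^{(l)}$. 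The conditional-covariance computations carried out in the proof of Lemma \ref{EXVX} give ${\bf{V}}_{\mathbb{X}_n}[\mathbb{X}_{t_i^n}|\mathscr{F}^n_{i-1}]_{kl}=h_n({\bf{\Sigma}}_0)_{kl}+{\rm{R}}_{i-1}(h_n^2,\cdot)$, while (\ref{X1}) and (\ref{X2}) give $V_i^{(k)}={\rm{R}}_{i-1}(h_n,\cdot)$ and hence $V_i^{(k)}V_i^{(l)}={\rm{R}}_{i-1}(h_n^2,\cdot)$; contracting against the entries of $A_j$ yields the claim and $|r_{i,n}|\leq{\rm{R}}_{i-1}(h_n,\cdot)$. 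Then the triangle inequality, $(\sum_ia_i)^L\leq n^{L-1}\sum_ia_i^L$, the identity $nh_n=T=1$, and $\sup_t{\bf{E}}_{\mathbb{X}_n}[(1+|\cdot|)^{CL}]<\infty$ from {\bf{[A]}} give ${\bf{E}}_{\mathbb{X}_n}[|\tfrac{1}{\sqrt{n}}\sum_{i=1}^nr_{i,n}|^L]\leq C_Ln^{L/2}h_n^L=C_Ln^{-L/2}$, which is bounded.

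Combining the two estimates proves the assertion for every $L\geq2$, and the case $0<L<2$ then follows from Lyapunov's inequality. I expect the remainder analysis — checking that recentering by the conditional mean agrees with $h_nA_j[{\bf{\Sigma}}_0]$ up to a term of order $h_n^2$ with polynomial growth — to be the only genuinely delicate point, though it is essentially bookkeeping on top of the conditional-moment expansions already obtained in the proof of Lemma \ref{EXVX}.
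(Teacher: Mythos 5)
Your proposal is correct and follows essentially the same route as the paper's own proof: explicit differentiation of ${\rm{H}}_n$ at $\theta_0$, a martingale-plus-remainder split, the Burkholder inequality combined with the power-mean bound for the martingale part, and the conditional-moment expansions of Lemma \ref{EXVX} together with $nh_n^2=n^{-1}$ for the remainder. Your recentering by ${\bf{E}}_{\mathbb{X}_n}[A_j[(\Delta_i\mathbb{X})^{\otimes 2}]\,|\,\mathscr{F}^n_{i-1}]$ is algebraically identical to the paper's decomposition into the terms $U_i^{\otimes 2}-{\bf{V}}_{\mathbb{X}_n}[\mathbb{X}_{t_i^n}|\mathscr{F}^n_{i-1}]$, $2A_j[U_i,V_i]$, $V_i^{\otimes 2}$ and ${\bf{V}}_{\mathbb{X}_n}[\mathbb{X}_{t_i^n}|\mathscr{F}^n_{i-1}]-h_n{\bf{\Sigma}}(\theta_0)$, so the two arguments coincide up to presentation.
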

\begin{proof}[\textbf{Proof of Lemma \ref{deltalemma}}]
Note that
\begin{align*}
    \partial_{\theta^{(j)}}{\rm{H}}_n(\theta)&=-\frac{1}{2h_n}\sum_{i=1}^n\Bigl(\partial_{\theta^{(j)}}{\bf{\Sigma}}(\theta)^{-1}\Bigr)\Bigl[\bigl(\Delta_i \mathbb{X}\bigr)^{\otimes 2}\Bigr]-\frac{n}{2}\partial_{\theta^{(j)}}\log\det{\bf{\Sigma}}(\theta_0)\\
    &=\frac{1}{2h_n}\sum_{i=1}^n\Bigl({\bf{\Sigma}}(\theta)^{-1}\Bigr)\Bigl(\partial_{\theta^{(j)}}{\bf{\Sigma}}(\theta)\Bigr)\Bigl({\bf{\Sigma}}(\theta)^{-1}\Bigr)\Bigl[\bigl(\Delta_i \mathbb{X}\bigr)^{\otimes 2}\Bigr]-\frac{n}{2}\Bigl({\bf{\Sigma}}(\theta)^{-1}\Bigr)\Bigl[\partial_{\theta^{(j)}}{\bf{\Sigma}}(\theta)\Bigr]
\end{align*}
for $j=1,\cdots,q$. Since
\begin{align*}
    \Bigl(\Delta_i \mathbb{X}\Bigr)^{\otimes 2}
    &=\biggl(\mathbb{X}_{t_i^n}-{\bf{E}}\Bigl[\mathbb{X}_{t_{i}^n}\big|\mathscr{F}^{n}_{i-1}\Bigr]\biggr)^{\otimes 2}+\biggl({\bf{E}}\Bigl[\mathbb{X}_{t_{i}^n}\big|\mathscr{F}^{n}_{i-1}\Bigr]-\mathbb{X}_{t_{i-1}^n}\biggr)^{\otimes 2}\\
    &\qquad\qquad+\biggl(\mathbb{X}_{t_i^n}-{\bf{E}}\Bigl[\mathbb{X}_{t_{i}^n}\big|\mathscr{F}^{n}_{i-1}\Bigr]\biggr)\biggl({\bf{E}}\Bigl[\mathbb{X}_{t_{i}^n}\big|\mathscr{F}^{n}_{i-1}\Bigr]-\mathbb{X}_{t_{i-1}^n}\biggr)^{\top}\\
    &\qquad\qquad+\biggl({\bf{E}}\Bigl[\mathbb{X}_{t_{i}^n}\big|\mathscr{F}^{n}_{i-1}\Bigr]-\mathbb{X}_{t_{i-1}^n}\biggr)\biggl(\mathbb{X}_{t_i^n}-{\bf{E}}\Bigl[\mathbb{X}_{t_{i}^n}\big|\mathscr{F}^{n}_{i-1}\Bigr]\biggr)^{\top},
\end{align*}
we have
\begin{align*}
    &\quad\ \frac{1}{\sqrt{n}}\partial_{\theta^{(j)}}{\rm{H}}_n(\theta_0)\\
    &=\frac{1}{2n^{\frac{1}{2}}h_n}\sum_{i=1}^n\Biggl\{\Bigl({\bf{\Sigma}}(\theta_0)^{-1}\Bigr)\Bigl(\partial_{\theta^{(j)}}{\bf{\Sigma}}(\theta_0)\Bigr)\Bigl({\bf{\Sigma}}(\theta_0)^{-1}\Bigr)\Bigl[\bigl(\Delta_i \mathbb{X}\bigr)^{\otimes 2}\Bigr]-h_n\Bigl({\bf{\Sigma}}(\theta_0)^{-1}\Bigr)\Bigl[\partial_{\theta^{(j)}}{\bf{\Sigma}}(\theta_0)\Bigr]\Biggr\}\\
    &=\frac{1}{2n^{\frac{1}{2}}h_n}\sum_{i=1}^n\Bigl({\bf{\Sigma}}(\theta_0)^{-1}\Bigr)\Bigl(\partial_{\theta^{(j)}}{\bf{\Sigma}}(\theta_0)\Bigr)\Bigl({\bf{\Sigma}}(\theta_0)^{-1}\Bigr)\Bigl[\bigl(\Delta_i \mathbb{X}\bigr)^{\otimes 2}-h_n{\bf{\Sigma}}(\theta_0)\Bigr]\\
    &={\bf{M}}_n^{(j)}+{\bf{R}}_n^{(j)}
\end{align*}
for $j=1,\cdots,q$, where
\begin{align*}
    {\bf{M}}_n^{(j)}&=\frac{1}{2n^{\frac{1}{2}}h_n}\sum_{i=1}^n\Bigl({\bf{\Sigma}}(\theta_0)^{-1}\Bigr)\Bigl(\partial_{\theta^{(j)}}{\bf{\Sigma}}(\theta_0)\Bigr)\Bigl({\bf{\Sigma}}(\theta_0)^{-1}\Bigr)\\
    &\qquad\qquad\qquad\qquad\qquad\qquad\Biggl[\biggl(\mathbb{X}_{t_i^n}-{\bf{E}}\Bigl[\mathbb{X}_{t_{i}^n}\big|\mathscr{F}^{n}_{i-1}\Bigr]\biggr)^{\otimes 2}-{\bf{V}}\Bigl[\mathbb{X}_{t_{i}^n}\big|\mathscr{F}^{n}_{i-1}\Bigr]\Biggr]\\
    &\quad+\frac{1}{n^{\frac{1}{2}}h_n}\sum_{i=1}^n\Bigl({\bf{\Sigma}}(\theta_0)^{-1}\Bigr)\Bigl(\partial_{\theta^{(j)}}{\bf{\Sigma}}(\theta_0)\Bigr)\Bigl({\bf{\Sigma}}(\theta_0)^{-1}\Bigr)\\
    &\qquad\qquad\qquad\qquad\qquad\qquad\Biggl[\mathbb{X}_{t_i^n}-{\bf{E}}\Bigl[\mathbb{X}_{t_{i}^n}\big|\mathscr{F}^{n}_{i-1}\Bigr],{\bf{E}}\Bigl[\mathbb{X}_{t_{i}^n}\big|\mathscr{F}^{n}_{i-1}\Bigr]-\mathbb{X}_{t_{i-1}^n}\Biggr]
\end{align*}
and
\begin{align*}
    {\bf{R}}_n^{(j)}&=\frac{1}{2n^{\frac{1}{2}}h_n}\sum_{i=1}^n\Bigl({\bf{\Sigma}}(\theta_0)^{-1}\Bigr)\Bigl(\partial_{\theta^{(j)}}{\bf{\Sigma}}(\theta_0)\Bigr)\Bigl({\bf{\Sigma}}(\theta_0)^{-1}\Bigr)\Biggl[\biggl({\bf{E}}\Bigl[\mathbb{X}_{t_{i}^n}\big|\mathscr{F}^{n}_{i-1}\Bigr]-\mathbb{X}_{t_{i-1}^n}\biggr)^{\otimes 2}\Biggr]\\
    &\qquad\quad+\frac{1}{2n^{\frac{1}{2}}h_n}\sum_{i=1}^n\Bigl({\bf{\Sigma}}(\theta_0)^{-1}\Bigr)\Bigl(\partial_{\theta^{(j)}}{\bf{\Sigma}}(\theta_0)\Bigr)\Bigl({\bf{\Sigma}}(\theta_0)^{-1}\Bigr)\Biggl[{\bf{V}}\Bigl[\mathbb{X}_{t_{i}^n}\big|\mathscr{F}^{n}_{i-1}\Bigr]-h_n{\bf{\Sigma}}(\theta_0)\Biggr].
\end{align*}
First, we will prove 
\begin{align}
    \sup_{n\in\mathbb{N}}{\bf{E}}\biggl[\bigl|{\bf{M}}^{(j)}_n\bigr|^L\biggr]<\infty
    \label{supM}
\end{align}
for any $L>1$. Set
\begin{align*}
    {\bf{N}}^{(j)}_k=\frac{1}{2h_n}\sum_{\ell=1}^{k}{\bf{L}}^{(j)}_\ell
\end{align*}
for $k=0,\cdots,n$, where
\begin{align*}
    {\bf{L}}^{(j)}_\ell&=\Bigl({\bf{\Sigma}}(\theta_0)^{-1}\Bigr)\Bigl(\partial_{\theta^{(j)}}{\bf{\Sigma}}(\theta_0)\Bigr)\Bigl({\bf{\Sigma}}(\theta_0)^{-1}\Bigr)\Biggl[\biggl(\mathbb{X}_{t_i^n}-{\bf{E}}\Bigl[\mathbb{X}_{t_{i}^n}\big|\mathscr{F}^{n}_{i-1}\Bigr]\biggr)^{\otimes 2}-{\bf{V}}\Bigl[\mathbb{X}_{t_{i}^n}\big|\mathscr{F}^{n}_{i-1}\Bigr]\Biggr]\\
    &\quad+2\Bigl({\bf{\Sigma}}(\theta_0)^{-1}\Bigr)\Bigl(\partial_{\theta^{(j)}}{\bf{\Sigma}}(\theta_0)\Bigr)\Bigl({\bf{\Sigma}}(\theta_0)^{-1}\Bigr)\Biggl[\mathbb{X}_{t_i^n}-{\bf{E}}\Bigl[\mathbb{X}_{t_{i}^n}\big|\mathscr{F}^{n}_{i-1}\Bigr],{\bf{E}}\Bigl[\mathbb{X}_{t_{i}^n}\big|\mathscr{F}^{n}_{i-1}\Bigr]-\mathbb{X}_{t_{i-1}^n}\Biggr]
\end{align*}
for $\ell=1,\cdots,k$. Since 
\begin{align*}
    {\bf{E}}\biggl[{\bf{L}}^{(j)}_{\ell}\big|\mathscr{F}^{n}_{\ell-1}\biggr]=0,
\end{align*}
one has
\begin{align*}
    {\bf{E}}\biggl[{\bf{N}}^{(j)}_k\big|\mathscr{F}^{n}_{k-1}\biggr]&=\frac{1}{2h_n}\sum_{\ell=1}^{k-1}{\bf{L}}^{(j)}_{\ell}+\frac{1}{2h_n}{\bf{E}}\biggl[{\bf{L}}^{(j)}_k\big|\mathscr{F}^{n}_{k-1}\biggr]
    ={\bf{N}}^{(j)}_{k-1},
\end{align*}
so that $\{{\bf{N}}_k^{(j)}\}_{k=0}^{n}$ is a discrete-time martingale with respect to $\{\mathscr{F}^n_{i}\}_{i=0}^n$. Note that $\sqrt{n}{\bf{M}}^{(j)}_n$ is the terminal
value of $\{{\bf{N}}_k^{(j)}\}_{k=0}^{n}$:
\begin{align*}
    \sqrt{n}{\bf{M}}_n^{(j)}={\bf{N}}^{(j)}_n.
\end{align*}
Using the Burkholder inequality and
\begin{align*}
    \bigl\langle {\bf{N}}^{(j)}\bigr\rangle_n=\sum_{k=1}^n\bigl({\bf{N}}_k^{(j)}-{\bf{N}}_{k-1}^{(j)}\bigr)^2=\frac{1}{4h^2_n}\sum_{k=1}^n{\bf{L}}_{k}^{(j)2},
\end{align*}
we have
\begin{align*}
    {\bf{E}}\biggl[\bigl|{\bf{N}}^{(j)}_n\bigr|^{L}\biggr]
    &\leq C_{L}{\bf{E}}\biggl[\bigl\langle {\bf{N}}^{(j)}\bigr\rangle_n^{\frac{L}{2}}\biggr]\\
    &\leq \frac{C_L}{h_n^L}{\bf{E}}\left[\Biggl(\sum_{k=1}^n {\bf{L}}^{(j)2}_{k}\Biggr)
    ^{\frac{L}{2}}\right]
    \leq \frac{C_L}{h_n^L}\times n^{\frac{L}{2}-1}\sum_{k=1}^n{\bf{E}}\biggl[\bigl|{\bf{L}}^{(j)}_{k}\bigr|^{L}\biggr]
\end{align*}
for all $L>1$, which yields
\begin{align}
    {\bf{E}}\biggl[\bigl|{\bf{M}}^{(j)}_n\bigr|^{L}\biggr]=\frac{1}{n^{\frac{L}{2}}}{\bf{E}}\biggl[\bigl|{\bf{N}}^{(j)}_n\bigr|^{L}\biggr]
    \leq \frac{C_L}{nh_n^L}\sum_{k=1}^n{\bf{E}}\biggl[\big|{\bf{L}}^{(j)}_k\big|^{L}\biggr].
    \label{Mine}
\end{align}
Moreover, it follows from Lemma \ref{EXVX} and the Cauchy-Schwartz inequality that
\begin{align*}
    {\bf{E}}\biggl[\bigl|{\bf{L}}^{(j)}_k\bigr|^{L}\biggr]&\leq C_L{\bf{E}}\Biggl[\Bigl|\mathbb{X}_{t_i^n}-{\bf{E}}\Bigl[\mathbb{X}_{t_{i}^n}\big|\mathscr{F}^{n}_{i-1}\Bigr]\Bigr|^{2L}
    \Biggr]+C_L{\bf{E}}\Biggl[\Bigl|{\bf{V}}\Bigl[\mathbb{X}_{t_{i}^n}\big|\mathscr{F}^{n}_{i-1}\Bigr]\Bigr|^{L}
    \Biggr]\\
    &\quad+C_L{\bf{E}}\Biggl[\Bigl|\mathbb{X}_{t_i^n}-{\bf{E}}\Bigl[\mathbb{X}_{t_{i}^n}\big|\mathscr{F}^{n}_{i-1}\Bigr]\Bigr|^{L}\Bigl|{\bf{E}}\Bigl[\mathbb{X}_{t_{i}^n}\big|\mathscr{F}^{n}_{i-1}\Bigr]-\mathbb{X}_{t_{i-1}^n}\Bigr|^{L}
    \Biggr]\\
    &\leq C_L{\bf{E}}\Biggl[\Bigl|\mathbb{X}_{t_i^n}-{\bf{E}}\Bigl[\mathbb{X}_{t_{i}^n}\big|\mathscr{F}^{n}_{i-1}\Bigr]\Bigr|^{2L}
    \Biggr]+C_L{\bf{E}}\Biggl[\Bigl|{\bf{V}}\Bigl[\mathbb{X}_{t_{i}^n}\big|\mathscr{F}^{n}_{i-1}\Bigr]\Bigr|^{L}
    \Biggr]\\
    &\quad+C_L{\bf{E}}\Biggl[\Bigl|\mathbb{X}_{t_i^n}-{\bf{E}}\Bigl[\mathbb{X}_{t_{i}^n}\big|\mathscr{F}^{n}_{i-1}\Bigr]\Bigr|^{2L}\Biggr]^{\frac{1}{2}}{\bf{E}}\Biggl[\Bigl|{\bf{E}}\Bigl[\mathbb{X}_{t_{i}^n}\big|\mathscr{F}^{n}_{i-1}\Bigr]-\mathbb{X}_{t_{i-1}^n}\Bigr|^{2L}
    \Biggr]^{\frac{1}{2}}\\
    &\leq C_L\Bigl(h_n^{L}+h_n^{L}+h_n^{\frac{3}{2}L}\Bigr)\\
    &\leq C_L h_n^L
\end{align*}
for any $L>1$, so that it holds from (\ref{Mine}) that
\begin{align*}
    {\bf{E}}\biggl[\bigl|{\bf{M}}^{(j)}_n\bigr|^{L}\biggr]\leq C_L,
\end{align*}
which implies (\ref{supM}). Next, we will prove
\begin{align}
    \sup_{n\in\mathbb{N}}{\bf{E}}\biggl[\bigl|{\bf{R}}^{(j)}_n\bigr|^L\biggr]<\infty
    \label{supR}
\end{align}
for all $L>1$. In an analogous manner to the proof of Lemma \ref{EXVX}, one has
\begin{align}
    {\bf{E}}\Biggl[\Bigl|{\bf{V}}\Bigl[\mathbb{X}_{t_{i}^n}\big|\mathscr{F}^{n}_{i-1}\Bigr]-h_n{\bf{\Sigma}}(\theta_0)\Bigr|^{L}\Biggr]\leq C_Lh_n^{2L} \label{V2}
\end{align}
for all $L>1$. Lemma \ref{EXVX} and (\ref{V2}) show
\begin{align*}
     {\bf{E}}\biggl[\bigl|{\bf{R}}^{(j)}_n\bigr|^L\biggr]&\leq\frac{C_L}{n^{\frac{L}{2}}h_n^L}\times
     n^{L-1}\sum_{i=1}^n{\bf{E}}\Biggl[\Bigl|{\bf{E}}\Bigl[\mathbb{X}_{t_{i}^n}\big|\mathscr{F}^{n}_{i-1}\Bigr]-\mathbb{X}_{t_{i-1}^n}\Bigr|^{2L}
    \Biggr]\\
    &\quad+\frac{C_L}{n^{\frac{L}{2}}h_n^L}\times
     n^{L-1}\sum_{i=1}^n{\bf{E}}\Biggl[\Bigl|{\bf{V}}\Bigl[\mathbb{X}_{t_{i}^n}\big|\mathscr{F}^{n}_{i-1}\Bigr]-h_n{\bf{\Sigma}}(\theta_0)\Bigr|^{L}\Biggr]\\
    &\leq\frac{C_L n^{\frac{L}{2}}}{h_n^L}\bigl(h_n^{2L}+h_n^{2L}\bigr)\\
    &\leq C_L(nh_n^2)^{\frac{L}{2}}
\end{align*}
for any $L>1$. Since $nh_n^2=n^{-1}\longrightarrow 0$ as $n\longrightarrow\infty$, we obtain (\ref{supR}). Consequently, for all $L>1$, it holds from (\ref{supM}) and (\ref{supR}) that
\begin{align*}
    \sup_{n\in\mathbb{N}}{\bf{E}}\Biggl[\biggl|\frac{1}{\sqrt{n}}\partial_{\theta^{(j)}}{\rm{H}}_n(\theta_0)\biggr|^L\Biggr]&\leq C_L\sup_{n\in\mathbb{N}}{\bf{E}}\biggl[\bigl|{\bf{M}}^{(j)}_n\bigr|^L\biggr]+C_L\sup_{n\in\mathbb{N}}{\bf{E}}\biggl[\bigl|{\bf{R}}^{(j)}_n\bigr|^L\biggr]\\
    &<\infty.
\end{align*}
Therefore, it is shown that for all $L>0$, 
\begin{align*}
    \sup_{n\in\mathbb{N}}{\bf{E}}\Biggl[\biggl|\frac{1}{\sqrt{n}}\partial_{\theta^{(j)}}{\rm{H}}_n(\theta_0)\biggr|^L\Biggr]<\infty
\end{align*}
for $j=1,\cdots, q$. 
\end{proof}
\begin{lemma}\label{gammalemma}
Under {\bf{[A]}}, for all $\varepsilon\in(0,\frac{1}{2})$ and $L>0$,
\begin{align*}
    \sup_{n\in\mathbb{N}}{\bf{E}}_{\mathbb{X}_n}\Biggl[\biggl(n^{\varepsilon}\biggl|\frac{1}{n}\partial_{\theta^{(j_1)}}\partial_{\theta^{(j_2)}}{\rm{H}}_n(\mathbb{X}_n,\theta_0)+{\bf{I}}(\theta_0)_{j_1j_2}\biggr|\biggr)^{L}\Biggr]<\infty
\end{align*}
for $j_1,j_2=1,\cdots, q$.
\end{lemma}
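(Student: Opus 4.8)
The plan is to differentiate the first--derivative formula established in the proof of Lemma \ref{deltalemma} once more, peel off its deterministic part, and show that this part cancels $-{\bf{I}}(\theta_0)_{j_1j_2}$ exactly, so that only a conditionally centred sum survives, to which the martingale machinery of Lemmas \ref{EXVX}--\ref{deltalemma} applies verbatim. Writing $B_{j}(\theta)={\bf{\Sigma}}(\theta)^{-1}\bigl(\partial_{\theta^{(j)}}{\bf{\Sigma}}(\theta)\bigr){\bf{\Sigma}}(\theta)^{-1}$ and differentiating the expression for $\partial_{\theta^{(j_1)}}{\rm{H}}_n$ in $\theta^{(j_2)}$, one obtains at $\theta_0$
\begin{align*}
\frac{1}{n}\partial_{\theta^{(j_1)}}\partial_{\theta^{(j_2)}}{\rm{H}}_n(\theta_0)=\frac{1}{2nh_n}\sum_{i=1}^n {\bf{G}}\Bigl[\bigl(\Delta_i\mathbb{X}\bigr)^{\otimes 2}\Bigr]-\frac{1}{2}\partial_{\theta^{(j_2)}}\tr\Bigl({\bf{\Sigma}}(\theta_0)^{-1}\partial_{\theta^{(j_1)}}{\bf{\Sigma}}(\theta_0)\Bigr),
\end{align*}
where ${\bf{G}}=\partial_{\theta^{(j_2)}}B_{j_1}(\theta)\big|_{\theta_0}$ is a constant matrix assembled from ${\bf{\Sigma}}(\theta_0)$ and its first and second derivatives through $\partial_{\theta^{(j)}}{\bf{\Sigma}}^{-1}=-{\bf{\Sigma}}^{-1}(\partial_{\theta^{(j)}}{\bf{\Sigma}}){\bf{\Sigma}}^{-1}$.

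First I would establish the algebraic identity ${\bf{I}}(\theta_0)_{j_1j_2}=\tfrac{1}{2}\tr\bigl({\bf{\Sigma}}(\theta_0)^{-1}(\partial_{\theta^{(j_1)}}{\bf{\Sigma}}(\theta_0)){\bf{\Sigma}}(\theta_0)^{-1}(\partial_{\theta^{(j_2)}}{\bf{\Sigma}}(\theta_0))\bigr)$. This follows from the Magnus--Neudecker duplication--matrix identity ${\bf{W}}(\theta_0)^{-1}=\tfrac{1}{2}\mathbb{D}_p^{\top}\bigl({\bf{\Sigma}}(\theta_0)^{-1}\otimes{\bf{\Sigma}}(\theta_0)^{-1}\bigr)\mathbb{D}_p$, together with $\vec\bigl(\partial_{\theta^{(j)}}{\bf{\Sigma}}(\theta_0)\bigr)=\mathbb{D}_p\bigl(\partial_{\theta^{(j)}}\vech{\bf{\Sigma}}(\theta_0)\bigr)$, the projection property $\mathbb{D}_p\mathbb{D}_p^{+}\vec A=\vec A$ for symmetric $A$, and $\vec A^{\top}\bigl({\bf{\Sigma}}^{-1}\otimes{\bf{\Sigma}}^{-1}\bigr)\vec B=\tr({\bf{\Sigma}}^{-1}A{\bf{\Sigma}}^{-1}B)$. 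A direct evaluation of ${\bf{G}}[{\bf{\Sigma}}(\theta_0)]$ and of $\partial_{\theta^{(j_2)}}\tr({\bf{\Sigma}}^{-1}\partial_{\theta^{(j_1)}}{\bf{\Sigma}})$, using cyclicity of the trace, then yields $\tfrac{1}{2}{\bf{G}}[{\bf{\Sigma}}(\theta_0)]-\tfrac{1}{2}\partial_{\theta^{(j_2)}}\tr({\bf{\Sigma}}^{-1}\partial_{\theta^{(j_1)}}{\bf{\Sigma}})=-{\bf{I}}(\theta_0)_{j_1j_2}$. Since $\tfrac{1}{2}{\bf{G}}[{\bf{\Sigma}}(\theta_0)]=\tfrac{1}{2nh_n}\sum_i{\bf{G}}[h_n{\bf{\Sigma}}(\theta_0)]$, this gives the exact reduction
\begin{align*}
\frac{1}{n}\partial_{\theta^{(j_1)}}\partial_{\theta^{(j_2)}}{\rm{H}}_n(\theta_0)+{\bf{I}}(\theta_0)_{j_1j_2}=\frac{1}{2nh_n}\sum_{i=1}^n {\bf{G}}\Bigl[\bigl(\Delta_i\mathbb{X}\bigr)^{\otimes 2}-h_n{\bf{\Sigma}}(\theta_0)\Bigr].
\end{align*}

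For this centred sum I would reuse the decomposition of Lemma \ref{deltalemma}: with $U_i=\mathbb{X}_{t_i^n}-{\bf{E}}[\mathbb{X}_{t_i^n}|\mathscr{F}^n_{i-1}]$ and $V_i={\bf{E}}[\mathbb{X}_{t_i^n}|\mathscr{F}^n_{i-1}]-\mathbb{X}_{t_{i-1}^n}$, expand $(\Delta_i\mathbb{X})^{\otimes2}=U_i^{\otimes2}+U_iV_i^{\top}+V_iU_i^{\top}+V_i^{\otimes2}$ and split the summand into a martingale--difference block ${\bf{G}}[U_i^{\otimes2}-{\bf{V}}[\mathbb{X}_{t_i^n}|\mathscr{F}^n_{i-1}]]+{\bf{G}}[U_iV_i^{\top}+V_iU_i^{\top}]$, each term being $\mathscr{F}^n_{i-1}$--conditionally centred because $V_i$ is $\mathscr{F}^n_{i-1}$--measurable and ${\bf{E}}[U_i|\mathscr{F}^n_{i-1}]=0$, plus a remainder ${\bf{G}}[{\bf{V}}[\mathbb{X}_{t_i^n}|\mathscr{F}^n_{i-1}]-h_n{\bf{\Sigma}}(\theta_0)]+{\bf{G}}[V_i^{\otimes2}]$. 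For the martingale block, the Burkholder inequality followed by the power--mean bound ${\bf{E}}[|\sum_k d_k|^{L}]\le n^{L/2-1}\sum_k{\bf{E}}[|d_k|^{L}]$ and the moment estimates (\ref{EXine}), (\ref{EXine2}), (\ref{Vine}) give each increment $L^{L}$--norm of order $h_n$, whence $\|\sum_i d_i\|_{L^L}=O(n^{1/2}h_n)=O(n^{-1/2})$; since $nh_n=1$ the block itself is $O(n^{-1/2})$ in $L^L$. The remainder is bounded directly by (\ref{V2}) and (\ref{EXine2}), each summand having $L^L$--norm of order $h_n^2$, giving an $L^L$--norm $O(nh_n^2)=O(n^{-1})$. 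Multiplying by $n^{\varepsilon}$, both contributions are bounded in $L^L$ by $C_L(n^{\varepsilon-1/2}+n^{\varepsilon-1})$, which is uniformly bounded in $n$ precisely when $\varepsilon<\tfrac{1}{2}$, yielding the claim.

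The main obstacle is the algebraic cancellation in the deterministic part: one must correctly collect the three trace terms produced by differentiating ${\bf{\Sigma}}^{-1}(\partial_{\theta^{(j_1)}}{\bf{\Sigma}}){\bf{\Sigma}}^{-1}$ and match them, through the duplication--matrix identities, to the quadratic form $\Delta_0^{\top}{\bf{W}}(\theta_0)^{-1}\Delta_0$. Once this identity is in place, the probabilistic half is an essentially verbatim repetition of the martingale/remainder analysis of Lemma \ref{deltalemma}, now exploiting the sharper order $h_n^2$ available for the remainder, which is exactly what forces the threshold $\varepsilon<\tfrac12$.
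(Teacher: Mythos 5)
Your proposal is correct and takes essentially the same route as the paper's own proof: the identical algebraic cancellation of the deterministic part against ${\bf{I}}(\theta_0)_{j_1j_2}$ via the duplication-matrix identity ${\bf{W}}(\theta_0)^{-1}=\tfrac{1}{2}\mathbb{D}_p^{\top}\bigl({\bf{\Sigma}}(\theta_0)^{-1}\otimes{\bf{\Sigma}}(\theta_0)^{-1}\bigr)\mathbb{D}_p$, the same reduction to the centred sum $\frac{1}{2nh_n}\sum_{i}\bigl(\partial_{\theta^{(j_1)}}\partial_{\theta^{(j_2)}}{\bf{\Sigma}}(\theta_0)^{-1}\bigr)\bigl[(\Delta_i\mathbb{X})^{\otimes 2}-h_n{\bf{\Sigma}}(\theta_0)\bigr]$ (your ${\bf{G}}$ is just its negative), and the same martingale-plus-remainder split estimated by the Burkholder inequality together with Lemma \ref{EXVX} and (\ref{V2}), producing exactly the paper's rates $n^{L(\varepsilon-\frac{1}{2})}$ and $(nh_n^2)^{\frac{L}{2}}n^{L(\varepsilon-\frac{1}{2})}$.
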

\begin{proof}[\textbf{Proof of Lemma \ref{gammalemma}}]
Note that
\begin{align*}
    \Bigl(\partial_{\theta^{(j_1)}}\partial_{\theta^{(j_2)}}{\bf{\Sigma}}(\theta)^{-1}\Bigr)\Bigl[{\bf{\Sigma}}(\theta)\Bigr]&=\tr\biggl\{\Bigl({\bf{\Sigma}}(\theta)^{-1}\Bigr)\Bigl(\partial_{\theta^{(j_1)}}{\bf{\Sigma}}(\theta)\Bigr)\Bigl({\bf{\Sigma}}(\theta)^{-1}\Bigr)\Bigl(\partial_{\theta^{(j_2)}}{\bf{\Sigma}}(\theta)\Bigr)\biggr\}\\
    &\qquad-\tr\biggl\{\Bigl({\bf{\Sigma}}(\theta)^{-1}\Bigr)\Bigl(\partial_{\theta^{(j_1)}}\partial_{\theta^{(j_2)}}{\bf{\Sigma}}(\theta)\Bigr)\biggr\}\\
    &\qquad+\tr\biggl\{\Bigl({\bf{\Sigma}}(\theta)^{-1}\Bigr)\Bigl(\partial_{\theta^{(j_2)}}{\bf{\Sigma}}(\theta)\Bigr)\Bigl({\bf{\Sigma}}(\theta)^{-1}\Bigr)\Bigl(\partial_{\theta^{(j_1)}}{\bf{\Sigma}}(\theta)\Bigr)\biggr\}
\end{align*}
and
\begin{align*}
    \partial_{\theta^{(j_1)}}\partial_{\theta^{(j_2)}}\log \det {\bf{\Sigma}}(\theta)
    &=-\tr\biggl\{\Bigl({\bf{\Sigma}}(\theta)^{-1}\Bigr)\Bigl(\partial_{\theta^{(j_1)}}{\bf{\Sigma}}(\theta)\Bigr)\Bigl({\bf{\Sigma}}(\theta)^{-1}\Bigr)\Bigl(\partial_{\theta^{(j_2)}}{\bf{\Sigma}}(\theta)\Bigr)\biggr\}\\
    &\qquad+\tr\biggl\{\Bigl({\bf{\Sigma}}(\theta)^{-1}\Bigr)\Bigl(\partial_{\theta^{(j_1)}}\partial_{\theta^{(j_2)}}{\bf{\Sigma}}(\theta)\Bigr)\biggr\}
\end{align*}
for $j_1,j_2=1,\cdots,q$. Since
\begin{align*}
    &\quad\ \frac{1}{2}\Bigl(\partial_{\theta^{(j_1)}}\partial_{\theta^{(j_2)}}{\bf{\Sigma}}(\theta_0)^{-1}\Bigr)\Bigl[{\bf{\Sigma}}(\theta_0)\Bigr]+\frac{1}{2}\partial_{\theta^{(j_1)}}\partial_{\theta^{(j_2)}}\log \det {\bf{\Sigma}}(\theta_0)\\
    &=\frac{1}{2}\tr\Biggl\{\Bigl({\bf{\Sigma}}(\theta_0)^{-1}\Bigr)\Bigl(\partial_{\theta^{(j_2)}}{\bf{\Sigma}}(\theta_0)\Bigr)\Bigl({\bf{\Sigma}}(\theta_0)^{-1}\Bigr)\Bigl(\partial_{\theta^{(j_1)}}{\bf{\Sigma}}(\theta_0)\Bigr)\Biggl\}\\
    &=\frac{1}{2}\Bigl(\vec{\partial_{\theta^{(j_1)}}{\bf{\Sigma}}(\theta_0)}\Bigr)^{\top}\Bigl({\bf{\Sigma}}(\theta_0)^{-1}\otimes{\bf{\Sigma}}(\theta_0)^{-1}\Bigr)\Bigl(\vec{\partial_{\theta^{(j_2)}}{\bf{\Sigma}}(\theta_0)}\Bigr)\\
    &=\frac{1}{2}\Bigl(\vech{\partial_{\theta^{(j_1)}}{\bf{\Sigma}}(\theta_0)}\Bigr)^{\top}\mathbb{D}_p^{\top}\Bigl({\bf{\Sigma}}(\theta_0)^{-1}\otimes{\bf{\Sigma}}(\theta_0)^{-1}\Bigr)\mathbb{D}_p\Bigl(\vech{\partial_{\theta^{(j_2)}}{\bf{\Sigma}}(\theta_0)}\Bigr)\\
    &=\Bigl(\partial_{\theta^{(j_1)}}\vech{{\bf{\Sigma}}(\theta_0)}\Bigr)^{\top}{\bf{W}}(\theta_0)^{-1}\Bigl(\partial_{\theta^{(j_2)}}\vech{{\bf{\Sigma}}(\theta_0)}\Bigr)\\
    &={\bf{I}}(\theta_0)_{j_1j_2},
\end{align*}
we have
\begin{align*}
    \frac{1}{n}\partial_{\theta^{(j_1)}}\partial_{\theta^{(j_2)}}{\rm{H}}_n(\theta_0)
    &=-\frac{1}{2nh_n}\sum_{i=1}^n\Bigl(\partial_{\theta^{(j_1)}}\partial_{\theta^{(j_2)}}{\bf{\Sigma}}(\theta_0)^{-1}\Bigr)\Bigl[\bigl(\Delta_i \mathbb{X}\bigr)^{\otimes 2}\Bigr]-\frac{1}{2}\partial_{\theta^{(j_1)}}\partial_{\theta^{(j_2)}}\log\det{\bf{\Sigma}}(\theta_0)\\
    &=-\frac{1}{2nh_n}\sum_{i=1}^n\Bigl(\partial_{\theta^{(j_1)}}\partial_{\theta^{(j_2)}}{\bf{\Sigma}}(\theta_0)^{-1}\Bigr)\Bigl[\bigl(\Delta_i \mathbb{X}\bigr)^{\otimes 2}-h_n{\bf{\Sigma}}(\theta_0)\Bigr]\\
    &\qquad\qquad\quad-\frac{1}{2}\Bigl(\partial_{\theta^{(j_1)}}\partial_{\theta^{(j_2)}}{\bf{\Sigma}}(\theta_0)^{-1}\Bigr)\Bigl[{\bf{\Sigma}}(\theta_0)\Bigr]-\frac{1}{2}\partial_{\theta^{(j_1)}}\partial_{\theta^{(j_2)}}\log\det{\bf{\Sigma}}(\theta_0)\\
    &=-\frac{1}{2nh_n}\sum_{i=1}^n\Bigl(\partial_{\theta^{(j_1)}}\partial_{\theta^{(j_2)}}{\bf{\Sigma}}(\theta_0)^{-1}\Bigr)\Bigl[\bigl(\Delta_i \mathbb{X}\bigr)^{\otimes 2}-h_n{\bf{\Sigma}}(\theta_0)\Bigr]-{\bf{I}}(\theta_0)_{j_1j_2},
\end{align*}
so that a decomposition is given by 
\begin{align*}
    \frac{1}{n}\partial_{\theta^{(j_1)}}\partial_{\theta^{(j_2)}}{\rm{H}}_n(\theta_0)+{\bf{I}}(\theta_0)_{j_1j_2}={\bf{M}}^{\dagger}_{n,j_1j_2}+{\bf{R}}^{\dagger}_{n,j_1j_2},
\end{align*}
where
\begin{align*}
    {\bf{M}}^{\dagger}_{n,j_1j_2}&=-\frac{1}{2nh_n}\sum_{i=1}^n\Bigl(\partial_{\theta^{(j_1)}}\partial_{\theta^{(j_2)}}{\bf{\Sigma}}(\theta_0)^{-1}\Bigr)\Biggl[\biggl(\mathbb{X}_{t_i^n}-{\bf{E}}\Bigl[\mathbb{X}_{t_{i}^n}\big|\mathscr{F}^{n}_{i-1}\Bigr]\biggr)^{\otimes 2}-{\bf{V}}\Bigl[\mathbb{X}_{t_{i}^n}\big|\mathscr{F}^{n}_{i-1}\Bigr]\Biggr]\\
    &\qquad-\frac{1}{nh_n}\sum_{i=1}^n\Bigl(\partial_{\theta^{(j_1)}}\partial_{\theta^{(j_2)}}{\bf{\Sigma}}(\theta_0)^{-1}\Bigr)\Biggl[\mathbb{X}_{t_i^n}-{\bf{E}}\Bigl[\mathbb{X}_{t_{i}^n}\big|\mathscr{F}^{n}_{i-1}\Bigr],{\bf{E}}\Bigl[\mathbb{X}_{t_{i}^n}\big|\mathscr{F}^{n}_{i-1}\Bigr]-\mathbb{X}_{t_{i-1}^n}\Biggr]
\end{align*}
and
\begin{align*}
    {\bf{R}}^{\dagger}_{n,j_1j_2}&=-\frac{1}{2nh_n}\sum_{i=1}^n\Bigl(\partial_{\theta^{(j_1)}}\partial_{\theta^{(j_2)}}{\bf{\Sigma}}(\theta_0)^{-1}\Bigr)\Biggl[\biggl({\bf{E}}\Bigl[\mathbb{X}_{t_{i}^n}\big|\mathscr{F}^{n}_{i-1}\Bigr]-\mathbb{X}_{t_{i-1}^n}\biggr)^{\otimes 2}\Biggr]\\
    &\qquad\qquad\qquad-\frac{1}{2nh_n}\sum_{i=1}^n\Bigl(\partial_{\theta^{(j_1)}}\partial_{\theta^{(j_2)}}{\bf{\Sigma}}(\theta_0)^{-1}\Bigr)\Biggl[{\bf{V}}\Bigl[\mathbb{X}_{t_{i}^n}\big|\mathscr{F}^{n}_{i-1}\Bigr]-h_n{\bf{\Sigma}}(\theta_0)\Biggr].
\end{align*}
Let 
\begin{align*}
    {\bf{N}}^{\dagger}_{k,j_1j_2}=\frac{1}{2h_n}\sum_{\ell=1}^{k}{\bf{L}}^{\dagger}_{\ell,j_1j_2}
\end{align*}
for $k=0,\cdots, n$, where
\begin{align*}
    {\bf{L}}^{\dagger}_{\ell,j_1j_2}&=-\Bigl(\partial_{\theta^{(j_1)}}\partial_{\theta^{(j_2)}}{\bf{\Sigma}}(\theta_0)^{-1}\Bigr)\Biggl[\biggl(\mathbb{X}_{t_i^n}-{\bf{E}}\Bigl[\mathbb{X}_{t_{i}^n}\big|\mathscr{F}^{n}_{i-1}\Bigr]\biggr)^{\otimes 2}-{\bf{V}}\Bigl[\mathbb{X}_{t_{i}^n}\big|\mathscr{F}^{n}_{i-1}\Bigr]\Biggr]\\
    &\qquad\qquad-2\Bigl(\partial_{\theta^{(j_1)}}\partial_{\theta^{(j_2)}}{\bf{\Sigma}}(\theta_0)^{-1}\Bigr)\Biggl[\mathbb{X}_{t_i^n}-{\bf{E}}\Bigl[\mathbb{X}_{t_{i}^n}\big|\mathscr{F}^{n}_{i-1}\Bigr],{\bf{E}}\Bigl[\mathbb{X}_{t_{i}^n}\big|\mathscr{F}^{n}_{i-1}\Bigr]-\mathbb{X}_{t_{i-1}^n}\Biggr]
\end{align*}
for $\ell=1,\cdots,k$. In a similar way to the proof of Lemma \ref{deltalemma}, $\{{\bf{N}}_{k,j_1j_2}^{\dagger}\}_{k=0}^{n}$ is a discrete-time martingale with respect to $\{\mathscr{F}^n_{i}\}_{i=0}^n$, and $n{\bf{M}}^{\dagger}_{n,j_1j_2}$ is the terminal value of $\{{\bf{N}}_{k,j_1j_2}^{\dagger}\}_{k=0}^{n}$:
\begin{align*}
    n{\bf{M}}^{\dagger}_{n,j_1j_2}={\bf{N}}^{\dagger}_{n,j_1j_2}.
\end{align*}
In a similar way to the proof of Lemma \ref{deltalemma}, it follows from the Burkholder inequality that
\begin{align*}
    {\bf{E}}\biggl[\bigl|{\bf{N}}^{\dagger}_{n,j_1j_2}\bigr|^{L}\biggr]
    \leq \frac{C_L}{h_n^L}\times n^{\frac{L}{2}-1}\sum_{k=1}^n{\bf{E}}\biggl[\bigl|{\bf{L}}^{\dagger}_{k,j_1j_2}\bigr|^{L}\biggr]
\end{align*}
for all $L>1$, which yields
\begin{align*}
    {\bf{E}}\biggl[\bigl|{n^{\varepsilon}\bf{M}}^{\dagger}_{n,j_1j_2}\bigr|^{L}\biggr]
    =n^{L(\varepsilon-1)}{\bf{E}}\biggl[\bigl|n{\bf{M}}^{\dagger}_{n,j_1j_2}\bigr|^{L}\biggr]\leq \frac{C_L}{h_n^{L}}\times n^{L(\varepsilon-\frac{1}{2})-1}\sum_{k=1}^n{\bf{E}}\biggl[\bigl|{\bf{L}}^{\dagger}_{k,j_1j_2}\bigr|^{L}\biggr].
\end{align*}
For any $L>1$, it is shown that
\begin{align*}
    {\bf{E}}\biggl[\bigl|{\bf{L}}^{\dagger}_{k,j_1j_2}\bigr|^{L}\biggr]\leq C_L h_n^L
\end{align*}
in an analogous manner to the proof of Lemma \ref{deltalemma}, which deduces
\begin{align*}
    {\bf{E}}\biggl[\bigl|{n^{\varepsilon}\bf{M}}^{\dagger}_{n,j_1j_2}\bigr|^{L}\biggr]\leq C_L n^{L(\varepsilon-\frac{1}{2})}.
\end{align*}
Since $\varepsilon-\frac{1}{2}<0$, we have $n^{L(\varepsilon-\frac{1}{2})}\longrightarrow 0$ as $n\longrightarrow\infty$, so that
\begin{align}
    \sup_{n\in\mathbb{N}}{\bf{E}}\biggl[\bigl|{n^{\varepsilon}\bf{M}}^{\dagger}_{n,j_1j_2}\bigr|^{L}\biggr]<\infty \label{supMd}
\end{align}
for all $L>1$. Furthermore, we see from Lemma \ref{EXVX} and (\ref{V2}) that for all $L>1$,
\begin{align*}
     {\bf{E}}\biggl[\bigl|n^{\varepsilon}{\bf{R}}^{\dagger}_{n,j_1j_2}\bigr|^L\biggr]&\leq\frac{C_L n^{L(\varepsilon-1)}}{h_n^L}\times
     n^{L-1}\sum_{i=1}^n{\bf{E}}\Biggl[\Bigl|{\bf{E}}\Bigl[\mathbb{X}_{t_{i}^n}\big|\mathscr{F}^{n}_{i-1}\Bigr]-\mathbb{X}_{t_{i-1}^n}\Bigr|^{2L}
    \Biggr]\\
    &\quad+\frac{C_L n^{L(\varepsilon-1)}}{h_n^L}\times n^{L-1}\sum_{i=1}^n{\bf{E}}\Biggl[\Bigl|{\bf{V}}\Bigl[\mathbb{X}_{t_{i}^n}\big|\mathscr{F}^{n}_{i-1}\Bigr]-h_n{\bf{\Sigma}}(\theta_0)\Bigr|^{L}\Biggr]\\
    &\leq\frac{C_L n^{L\varepsilon}}{h_n^L}\bigl(h_n^{2L}+h_n^{2L}\bigr)\\
    &\leq C_L(nh_n^2)^{\frac{L}{2}}n^{L(\varepsilon-\frac{1}{2})}
\end{align*}
and $(nh_n^2)^{\frac{L}{2}}n^{L(\varepsilon-\frac{1}{2})}\longrightarrow 0$ as $n\longrightarrow\infty$, which implies
\begin{align}
    \sup_{n\in\mathbb{N}}{\bf{E}}\biggl[\bigl|{n^{\varepsilon}\bf{R}}^{\dagger}_{n,j_1j_2}\bigr|^{L}\biggr]<\infty .\label{supRd}
\end{align}
Hence, it holds from (\ref{supMd}) and (\ref{supRd}) that
\begin{align*}
    \sup_{n\in\mathbb{N}}{\bf{E}}\Biggl[\biggl(n^{\varepsilon}\biggl|\frac{1}{n}\partial_{\theta^{(j_1)}}\partial_{\theta^{(j_2)}}{\rm{H}}_n(\theta_0)+{\bf{I}}(\theta_0)_{j_1j_2}\biggr|\biggr)^{L}\Biggr]&\leq \sup_{n\in\mathbb{N}}{\bf{E}}\biggl[\bigl|{n^{\varepsilon}\bf{M}}^{\dagger}_{n,j_1j_2}\bigr|^{L}\biggr]+\sup_{n\in\mathbb{N}}{\bf{E}}\biggl[\bigl|{n^{\varepsilon}\bf{R}}^{\dagger}_{n,j_1j_2}\bigr|^{L}\biggr]\\
    &<\infty
\end{align*}
for all $L>1$. Therefore, for all $L>0$, we obtain
\begin{align*}
    \sup_{n\in\mathbb{N}}{\bf{E}}\Biggl[\biggl(n^{\varepsilon}\biggl|\frac{1}{n}\partial_{\theta^{(j_1)}}\partial_{\theta^{(j_2)}}{\rm{H}}_n(\theta_0)+{\bf{I}}(\theta_0)_{j_1j_2}\biggr|\biggr)^{L}\Biggr]<\infty
\end{align*}
for $j_1,j_2=1,\cdots,q$.
\end{proof}
\begin{lemma}\label{Ylemma}
Under {\bf{[A]}}, for all $\varepsilon\in(0,\frac{1}{2})$ and $L>0$,
\begin{align*}
    \sup_{n\in\mathbb{N}}{\bf{E}}_{\mathbb{X}_n}
    \left[\left(\sup_{\theta\in\Theta}n^{\varepsilon}
    \Bigl|{\rm{Y}}_n(\mathbb{X}_n,\theta;\theta_0)-{\rm{Y}}(\theta)\Bigr|\right)^{L}\right]<\infty.
\end{align*}
\end{lemma}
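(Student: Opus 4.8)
The plan is to write ${\rm{Y}}_n-{\rm{Y}}$ in closed form, observe that the logarithmic terms cancel exactly, and thereby collapse the whole $\theta$-dependence onto a single $\theta$-free random matrix whose fluctuation around its limit is already controlled by the earlier lemmas. Recalling that
\[
{\rm{H}}_n(\mathbb{X}_n,\theta)=-\frac{n}{2}\log\det{\bf{\Sigma}}(\theta)-\frac{1}{2h_n}\sum_{i=1}^n{\bf{\Sigma}}(\theta)^{-1}\bigl[(\Delta_i\mathbb{X})^{\otimes 2}\bigr],
\]
and using linearity of $A[\cdot]$ in its matrix argument together with $nh_n=T=1$ (so that $\frac{1}{nh_n}\sum_{i=1}^n(\Delta_i\mathbb{X})^{\otimes2}=\mathbb{Q}_{\mathbb{XX}}$), I would obtain
\[
{\rm{Y}}_n(\mathbb{X}_n,\theta;\theta_0)-{\rm{Y}}(\theta)=-\frac{1}{2}\Bigl({\bf{\Sigma}}(\theta)^{-1}-{\bf{\Sigma}}(\theta_0)^{-1}\Bigr)\bigl[\mathbb{Q}_{\mathbb{XX}}-{\bf{\Sigma}}(\theta_0)\bigr],
\]
since the terms $-\tfrac12\log(\det{\bf{\Sigma}}(\theta)/\det{\bf{\Sigma}}(\theta_0))$ appearing in ${\rm{Y}}_n$ and in ${\rm{Y}}$ are identical.

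Next I would factor the supremum over $\theta$ out of the problem. Because $\Theta$ is compact and $\theta\mapsto{\bf{\Sigma}}(\theta)^{-1}$ is continuous and nonsingular on $\Theta$, the constant $C_{\Theta}=\sup_{\theta\in\Theta}|{\bf{\Sigma}}(\theta)^{-1}-{\bf{\Sigma}}(\theta_0)^{-1}|$ is finite and independent of $n$. Applying the Cauchy--Schwarz inequality $|A[B]|\le|A||B|$ gives
\[
\sup_{\theta\in\Theta}\bigl|{\rm{Y}}_n(\mathbb{X}_n,\theta;\theta_0)-{\rm{Y}}(\theta)\bigr|\le\frac{C_{\Theta}}{2}\bigl|\mathbb{Q}_{\mathbb{XX}}-{\bf{\Sigma}}(\theta_0)\bigr|,
\]
so the claim reduces to showing $\sup_{n}n^{\varepsilon L}{\bf{E}}\bigl[|\mathbb{Q}_{\mathbb{XX}}-{\bf{\Sigma}}(\theta_0)|^{L}\bigr]<\infty$. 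I would prove the stronger bound ${\bf{E}}[|\mathbb{Q}_{\mathbb{XX}}-{\bf{\Sigma}}(\theta_0)|^{L}]\le C_L n^{-L/2}$ for every $L>1$; then $n^{\varepsilon L}{\bf{E}}[\,\cdot\,]\le C_L n^{L(\varepsilon-1/2)}$ is bounded (indeed tends to $0$) because $\varepsilon<\tfrac12$, and the case $0<L\le1$ follows from the monotonicity of $L^p$-norms on a probability space.

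For this moment estimate I would reuse the martingale decomposition of Lemma \ref{deltalemma}. Writing $\mathbb{Q}_{\mathbb{XX}}-{\bf{\Sigma}}(\theta_0)=\frac{1}{nh_n}\sum_{i=1}^n[(\Delta_i\mathbb{X})^{\otimes 2}-h_n{\bf{\Sigma}}(\theta_0)]$ and inserting the conditional decomposition of $(\Delta_i\mathbb{X})^{\otimes 2}$ used there splits the sum into a matrix martingale part (the centred squares minus their conditional variances, together with the cross terms, which are $\{\mathscr{F}^n_i\}$-martingale differences) and a remainder (the squared predictable increments and the terms ${\bf{V}}[\mathbb{X}_{t_i^n}|\mathscr{F}^n_{i-1}]-h_n{\bf{\Sigma}}(\theta_0)$). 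Applying the Burkholder inequality componentwise, together with the increment bound $C_L h_n^{L}$ obtained exactly as in Lemma \ref{deltalemma} from Lemma \ref{EXVX}, yields ${\bf{E}}[|{\bf{M}}_n|^{L}]\le C_L n^{-L/2}$; meanwhile Lemma \ref{EXVX} (in particular (\ref{EXine2})) and (\ref{V2}) give ${\bf{E}}[|{\bf{R}}_n|^{L}]\le C_L h_n^{L}=C_L n^{-L}$. Combining the two produces the required $C_L n^{-L/2}$.

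The martingale and remainder estimates are routine, being essentially verbatim repetitions of the computations already carried out in Lemmas \ref{deltalemma} and \ref{gammalemma}. The only genuinely load-bearing observation is the exact cancellation of the $\log\det$ terms, which reduces the entire $\theta$-dependence to the single scalar factor $|{\bf{\Sigma}}(\theta)^{-1}-{\bf{\Sigma}}(\theta_0)^{-1}|$; once this is recognized, the supremum over the compact set $\Theta$ is harmless and no genuine uniform-in-$\theta$ empirical-process argument is needed.
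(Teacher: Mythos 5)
Your proof is correct, and it diverges from the paper's argument at the one place where the paper does real work: the supremum over $\theta$. Both you and the paper start from the same key identity (the exact cancellation of the $\log\det$ terms, leaving ${\rm{Y}}_n-{\rm{Y}}=-\tfrac{1}{2}({\bf{\Sigma}}(\theta)^{-1}-{\bf{\Sigma}}(\theta_0)^{-1})[\mathbb{Q}_{\mathbb{XX}}-{\bf{\Sigma}}(\theta_0)]$), and both control the $\theta$-free fluctuation $\mathbb{Q}_{\mathbb{XX}}-{\bf{\Sigma}}(\theta_0)$ by the same martingale-plus-remainder decomposition with Burkholder's inequality and Lemma \ref{EXVX}, exactly as in Lemmas \ref{deltalemma} and \ref{gammalemma}. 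The difference is that the paper keeps the factor ${\bf{\Sigma}}(\theta)^{-1}-{\bf{\Sigma}}(\theta_0)^{-1}$ inside its random fields ${\bf{M}}^{\dagger\dagger}_n$, ${\bf{R}}^{\dagger\dagger}_n$ and then obtains uniformity in $\theta$ via the Sobolev embedding: it needs pointwise-in-$\theta$ moment bounds for both the fields and their $\theta$-derivatives, the restriction $L>q$ (with smaller $L$ recovered by monotonicity), and the locally Lipschitz boundary of $\Theta$ assumed in the setup. You instead observe that the entire $\theta$-dependence is a deterministic matrix factor, so by $|A[B]|\le|A||B|$ and compactness of $\Theta$ the supremum costs only the constant $\sup_{\theta\in\Theta}|{\bf{\Sigma}}(\theta)^{-1}-{\bf{\Sigma}}(\theta_0)^{-1}|$, and no uniform-in-$\theta$ (Sobolev or chaining) machinery is needed at all. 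Your route is shorter, avoids the derivative estimates and the $L>q$ bookkeeping, and even yields the stronger conclusion that the moment in question is $O(n^{L(\varepsilon-1/2)})\to 0$; its only limitation is that it exploits the special structure of this covariance-type quasi-likelihood (the data enter only through $\mathbb{Q}_{\mathbb{XX}}$), whereas the paper's Sobolev argument would survive in settings where the random field does not factor. Your reduction does implicitly use that $\theta\mapsto{\bf{\Sigma}}(\theta)^{-1}$ is continuous on the compact $\Theta$, but this is already implicit in the paper (which freely uses $\sup_{\theta\in\Theta}|\partial_\theta^3{\bf{\Sigma}}(\theta)^{-1}|<\infty$ and positive definiteness of ${\bf{\Sigma}}(\theta)$ on all of $\Theta$), so no genuine gap arises.
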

\begin{proof}[\textbf{Proof of Lemma \ref{Ylemma}}]
Since
\begin{align*}
    {\rm{Y}}_n(\theta;\theta_0)&=\frac{1}{n}\Bigl\{{\rm{H}}_n(\theta)-{\rm{H}}_n(\theta_0)\Bigr\}\\
    &=-\frac{1}{2nh_n}\sum_{i=1}^n\Bigl({\bf{\Sigma}}(\theta)^{-1}-{\bf{\Sigma}}(\theta_0)^{-1}\Bigr)\Bigl[(\Delta_i \mathbb{X})^{\otimes 2}\Bigr]-\frac{1}{2}\log\frac{\det{\bf{\Sigma}}(\theta)}{\det{\bf{\Sigma}}(\theta_0)}
\end{align*}
and
\begin{align*}
    {\rm{Y}}(\theta)
    &=-\frac{1}{2}\Bigl({\bf{\Sigma}}(\theta)^{-1}-{\bf{\Sigma}}(\theta_0)^{-1}\Bigr)\Bigl[{\bf{\Sigma}}(\theta_0)\Bigr]-\frac{1}{2}
    \log\frac{\det{\bf{\Sigma}}(\theta)}{\det{\bf{\Sigma}}(\theta_0)},
\end{align*}
one has a decomposition
\begin{align*}
    {\rm{Y}}_n(\theta;\theta_0)-{\rm{Y}}(\theta)
    &=-\frac{1}{2nh_n}\sum_{i=1}^n\Bigl({\bf{\Sigma}}(\theta)^{-1}-{\bf{\Sigma}}(\theta_0)^{-1}\Bigr)\Bigl[(\Delta_i \mathbb{X})^{\otimes 2}-h_n{\bf{\Sigma}}(\theta_0)\Bigr]\\
    &={\bf{M}}^{\dagger\dagger}_n+{\bf{R}}^{\dagger\dagger}_n,
\end{align*}
where
\begin{align*}
    {\bf{M}}^{\dagger\dagger}_{n}&=-\frac{1}{2nh_n}\sum_{i=1}^n\Bigl({\bf{\Sigma}}(\theta)^{-1}-{\bf{\Sigma}}(\theta_0)^{-1}\Bigr)\Biggl[\biggl(\mathbb{X}_{t_i^n}-{\bf{E}}\Bigl[\mathbb{X}_{t_{i}^n}\big|\mathscr{F}^{n}_{i-1}\Bigr]\biggr)^{\otimes 2}-{\bf{V}}\Bigl[\mathbb{X}_{t_{i}^n}\big|\mathscr{F}^{n}_{i-1}\Bigr]\Biggr]\\
    &\quad-\frac{1}{nh_n}\sum_{i=1}^n\Bigl({\bf{\Sigma}}(\theta)^{-1}-{\bf{\Sigma}}(\theta_0)^{-1}\Bigr)\Biggl[\mathbb{X}_{t_i^n}-{\bf{E}}\Bigl[\mathbb{X}_{t_{i}^n}\big|\mathscr{F}^{n}_{i-1}\Bigr],{\bf{E}}\Bigl[\mathbb{X}_{t_{i}^n}\big|\mathscr{F}^{n}_{i-1}\Bigr]-\mathbb{X}_{t_{i-1}^n}\Biggr]
\end{align*}
and
\begin{align*}
    {\bf{R}}^{\dagger\dagger}_{n}&=-\frac{1}{2nh_n}\sum_{i=1}^n\Bigl({\bf{\Sigma}}(\theta)^{-1}-{\bf{\Sigma}}(\theta_0)^{-1}\Bigr)\Biggl[\biggl({\bf{E}}\Bigl[\mathbb{X}_{t_{i}^n}\big|\mathscr{F}^{n}_{i-1}\Bigr]-\mathbb{X}_{t_{i-1}^n}\biggr)^{\otimes 2}\Biggr]\\
    &\qquad\qquad\qquad\qquad\qquad-\frac{1}{2nh_n}\sum_{i=1}^n\Bigl({\bf{\Sigma}}(\theta)^{-1}-{\bf{\Sigma}}(\theta_0)^{-1}\Bigr)\Biggl[{\bf{V}}\Bigl[\mathbb{X}_{t_{i}^n}\big|\mathscr{F}^{n}_{i-1}\Bigr]-h_n{\bf{\Sigma}}(\theta_0)\Biggr].
\end{align*}
In an analogous manner to Lemma \ref{gammalemma}, one has
\begin{align*}
    \sup_{n\in\mathbb{N}}\sup_{\theta\in\Theta}{\bf{E}}\biggl[\bigl|{n^{\varepsilon}\bf{M}}^{\dagger\dagger}_{n}\bigr|^{L}\biggr]<\infty
\end{align*}
and
\begin{align*}
    \sup_{n\in\mathbb{N}}\sup_{\theta\in\Theta}{\bf{E}}\biggl[\bigl|{n^{\varepsilon}\partial_{\theta}\bf{M}}^{\dagger\dagger}_{n}\bigr|^{L}\biggr]<\infty
\end{align*}
for all $L>1$. Consequently, it holds from the Sobolev inequality that
\begin{align*}
    {\bf{E}}\Biggl[\sup_{\theta\in\Theta}\bigl|{n^{\varepsilon}\bf{M}}^{\dagger\dagger}_{n}\bigr|^{L}\Biggr]&\leq {\bf{E}}\Biggl[\int_{\Theta}\bigl|{n^{\varepsilon}\bf{M}}^{\dagger\dagger}_{n}\bigr|^{L}+\bigl|{n^{\varepsilon}\partial_{\theta}\bf{M}}^{\dagger\dagger}_{n}\bigr|^{L}d\theta\Biggr]\\
    &=\int_{\Theta}{\bf{E}}\biggl[\bigl|{n^{\varepsilon}\bf{M}}^{\dagger\dagger}_{n}\bigr|^{L}\biggr]d\theta+\int_{\Theta}{\bf{E}}\biggl[
    \bigl|{n^{\varepsilon}\partial_{\theta}\bf{M}}^{\dagger\dagger}_{n}\bigr|^{L}\biggr]d\theta\\
    &\leq\int_{\Theta}\sup_{\theta\in\Theta}{\bf{E}}\biggl[\bigl|{n^{\varepsilon}\bf{M}}^{\dagger\dagger}_{n}\bigr|^{L}\biggr]d\theta+\int_{\Theta}\sup_{\theta\in\Theta}{\bf{E}}\biggl[
    \bigl|{n^{\varepsilon}\partial_{\theta}\bf{M}}^{\dagger\dagger}_{n}\bigr|^{L}\biggr]d\theta\\
    &\leq C_{\Theta}\sup_{\theta\in\Theta}{\bf{E}}\biggl[\bigl|{n^{\varepsilon}\bf{M}}^{\dagger\dagger}_{n}\bigr|^{L}\biggr]+C_{\Theta}\sup_{\theta\in\Theta}{\bf{E}}\biggl[
    \bigl|{n^{\varepsilon}\partial_{\theta}\bf{M}}^{\dagger\dagger}_{n}\bigr|^{L}\biggr]
\end{align*}
for any $L>q$, which yields
\begin{align}
    \sup_{n\in\mathbb{N}}{\bf{E}}\Biggl[\sup_{\theta\in\Theta}\bigl|{n^{\varepsilon}\bf{M}}^{\dagger\dagger}_{n}\bigr|^{L}\Biggr]<\infty.\label{supthetaMdd}
\end{align}
In a similar way, it is shown that
\begin{align}
    \sup_{n\in\mathbb{N}}{\bf{E}}\Biggl[\sup_{\theta\in\Theta}\bigl|{n^{\varepsilon}\bf{R}}^{\dagger\dagger}_{n}\bigr|^{L}\Biggr]<\infty \label{supthetaRdd}
\end{align}
for all $L>q$. Thus, we see from (\ref{supthetaMdd}) and (\ref{supthetaRdd}) that
\begin{align*}
    &\quad\ \sup_{n\in\mathbb{N}}{\bf{E}}\Biggl[\biggl(n^{\varepsilon}\sup_{\theta\in\Theta}\Bigl|{\rm{Y}}_n(\theta;\theta_0)-{\rm{Y}}(\theta)\Bigr|\biggr)^{L}\Biggr]\\
    &\leq C_L \sup_{n\in\mathbb{N}}{\bf{E}}\Biggl[\sup_{\theta\in\Theta}\bigl|{n^{\varepsilon}\bf{M}}^{\dagger\dagger}_{n}\bigr|^{L}\Biggr]
    +C_L\sup_{n\in\mathbb{N}}{\bf{E}}\Biggl[\sup_{\theta\in\Theta}\bigl|{n^{\varepsilon}\bf{R}}^{\dagger\dagger}_{n}\bigr|^{L}\Biggr]<\infty
\end{align*}
for any $L>q$. Therefore, one gets
\begin{align*}
    &\quad\ \sup_{n\in\mathbb{N}}{\bf{E}}\Biggl[\biggl(n^{\varepsilon}\sup_{\theta\in\Theta}\Bigl|{\rm{Y}}_n(\theta;\theta_0)-{\rm{Y}}(\theta)\Bigr|\biggr)^{L}\Biggr]<\infty
\end{align*}
for all $\varepsilon\in(0,\frac{1}{2})$ and $L>0$.
\end{proof}
\begin{lemma}\label{H3lemma}
Under {\bf{[A]}}, for all $L>0$,
\begin{align*}
    \sup_{n\in\mathbb{N}}{\bf{E}}_{\mathbb{X}_n}
    \left[\left(\frac{1}{n}\sup_{\theta\in\Theta}
    \Bigl|\partial_{\theta^{(j_1)}}\partial_{\theta^{(j_2)}}\partial_{\theta^{(j_3)}}{\rm{H}}_n(\mathbb{X}_n,\theta)
    \Bigr|\right)^{L}\right]
    <\infty
\end{align*}
for $j_1,j_2,j_3=1,\cdots, q$.
\end{lemma}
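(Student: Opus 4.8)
The plan is to differentiate ${\rm{H}}_n$ three times, isolate the purely deterministic $\log\det$ contribution, and then dominate the remaining data-dependent term by the crude scalar bound $|A[B]|\le |A||B|$ combined with the second-moment estimate (\ref{Xine}) of Lemma \ref{EXVX}. Unlike the sharper Lemmas \ref{deltalemma} and \ref{gammalemma}, no martingale decomposition or centering is needed here, since we only require an $O(1)$ bound uniform in $n$, not a vanishing rate.

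First I would use
\begin{align*}
    {\rm{H}}_n(\theta)=-\frac{1}{2h_n}\sum_{i=1}^n{\bf{\Sigma}}(\theta)^{-1}\Bigl[\bigl(\Delta_i\mathbb{X}\bigr)^{\otimes 2}\Bigr]-\frac{n}{2}\log\det{\bf{\Sigma}}(\theta)
\end{align*}
and, writing $\partial^3_{j_1j_2j_3}=\partial_{\theta^{(j_1)}}\partial_{\theta^{(j_2)}}\partial_{\theta^{(j_3)}}$, obtain
\begin{align*}
    \frac{1}{n}\partial^3_{j_1j_2j_3}{\rm{H}}_n(\theta)=-\frac{1}{2nh_n}\sum_{i=1}^n\Bigl(\partial^3_{j_1j_2j_3}{\bf{\Sigma}}(\theta)^{-1}\Bigr)\Bigl[\bigl(\Delta_i\mathbb{X}\bigr)^{\otimes 2}\Bigr]-\frac{1}{2}\partial^3_{j_1j_2j_3}\log\det{\bf{\Sigma}}(\theta).
\end{align*}
The last term is deterministic and, since ${\bf{\Sigma}}(\theta)$ is positive definite and smooth with ${\bf{\Psi}}^{\theta}$ non-singular on the compact set $\Theta$, it is continuous in $\theta$ and hence bounded by a constant uniformly on $\Theta$; its $L$-th moment is therefore a finite constant independent of $n$.

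For the stochastic term I would set $C=\sup_{\theta\in\Theta}\bigl|\partial^3_{j_1j_2j_3}{\bf{\Sigma}}(\theta)^{-1}\bigr|$, which is finite because the matrix inverse and all its derivatives up to order three depend continuously on $\theta$ and $\Theta$ is compact. Using the trace inequality $|A[B]|\le|A||B|$ together with $\bigl|(\Delta_i\mathbb{X})^{\otimes 2}\bigr|=|\Delta_i\mathbb{X}|^2$, the supremum over $\theta$ passes inside the sum termwise:
\begin{align*}
    \sup_{\theta\in\Theta}\Biggl|\sum_{i=1}^n\Bigl(\partial^3_{j_1j_2j_3}{\bf{\Sigma}}(\theta)^{-1}\Bigr)\Bigl[\bigl(\Delta_i\mathbb{X}\bigr)^{\otimes 2}\Bigr]\Biggr|\leq C\sum_{i=1}^n\bigl|\Delta_i\mathbb{X}\bigr|^2,
\end{align*}
so it suffices to bound $\sup_n{\bf{E}}\bigl[(\tfrac{1}{nh_n}\sum_{i=1}^n|\Delta_i\mathbb{X}|^2)^L\bigr]$.

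The final step is to apply Jensen's inequality in the form $\bigl(\frac1n\sum_i a_i\bigr)^L\le\frac1n\sum_i a_i^L$ with $a_i=|\Delta_i\mathbb{X}|^2$, which gives $\bigl(\frac{1}{nh_n}\sum_i|\Delta_i\mathbb{X}|^2\bigr)^L\le\frac{1}{nh_n^L}\sum_i|\Delta_i\mathbb{X}|^{2L}$; taking expectations and invoking (\ref{Xine}) with exponent $2L$, namely ${\bf{E}}[|\Delta_i\mathbb{X}|^{2L}]\le C_L h_n^{L}$, yields the bound $C_L$ independent of $n$. Combining this with the constant coming from the $\log\det$ term via $(a+b)^L\le 2^{L-1}(a^L+b^L)$ completes the estimate for all $L>1$, and the case $0<L\le1$ follows by Jensen. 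The only genuinely nontrivial point is the uniformity in $\theta$, but this is settled purely by continuity of $\partial^3_{j_1j_2j_3}{\bf{\Sigma}}(\theta)^{-1}$ and compactness of $\Theta$; the rest is the routine moment bound already supplied by Lemma \ref{EXVX}.
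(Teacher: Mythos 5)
Your proof is correct, but it takes a genuinely different route from the paper's. The paper first establishes the pointwise-in-$\theta$ bound $\sup_{n}\sup_{\theta\in\Theta}{\bf{E}}\bigl[\bigl(\tfrac{1}{n}|\partial_{\theta^{(j_1)}}\partial_{\theta^{(j_2)}}\partial_{\theta^{(j_3)}}{\rm{H}}_n(\theta)|\bigr)^L\bigr]<\infty$, then proves the analogous bound (\ref{H4}) for the \emph{fourth} derivatives $\tfrac{1}{n}\partial_{\theta}\partial_{\theta^{(j_1)}}\partial_{\theta^{(j_2)}}\partial_{\theta^{(j_3)}}{\rm{H}}_n(\theta)$, and finally moves the supremum over $\theta$ inside the expectation via the Sobolev inequality (valid for $L>q$, using the compactness of $\Theta$ and its locally Lipschitz boundary), exactly as in Lemma \ref{Ylemma}. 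You instead exploit the fact that in this model the randomness and the $\theta$-dependence factorize: the random part of $\partial^3_{\theta}{\rm{H}}_n(\theta)$ is a sum of terms $\bigl(\partial^3_{\theta}{\bf{\Sigma}}(\theta)^{-1}\bigr)\bigl[(\Delta_i\mathbb{X})^{\otimes 2}\bigr]$ in which $(\Delta_i\mathbb{X})^{\otimes 2}$ is $\theta$-free and $\partial^3_{\theta}{\bf{\Sigma}}(\theta)^{-1}$ is deterministic, so the Cauchy--Schwarz bound $|A[B]|\le|A||B|$ produces the $\theta$-free random envelope $C(2nh_n)^{-1}\sum_i|\Delta_i\mathbb{X}|^2$ with $C=\sup_{\theta\in\Theta}|\partial^3_{\theta}{\bf{\Sigma}}(\theta)^{-1}|<\infty$, after which Jensen's inequality and (\ref{Xine}) finish the job. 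This buys a shorter argument that needs neither the fourth-derivative estimate (\ref{H4}) nor the Sobolev step with its $L>q$ restriction; it is available here precisely because ${\bf{\Sigma}}(\theta)$ does not depend on the state variable and because, as you note, only an $O(1)$ bound is required rather than a vanishing rate. By contrast, the paper's Sobolev template is the one that survives when the supremum is taken over a \emph{centered} random field (as in Lemma \ref{Ylemma}, where the envelope trick would destroy the $n^{\varepsilon}$ rate) or when the coefficients are state-dependent, which is presumably why the authors reused it here.
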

\begin{proof}[\textbf{Proof of Lemma \ref{H3lemma}}]
Since
\begin{align*}
    \frac{1}{n}\partial_{\theta^{(j_1)}}\partial_{\theta^{(j_2)}}\partial_{\theta^{(j_3)}}
    {\rm{H}}_n(\theta)
    &=-\frac{1}{2nh_n}\sum_{i=1}^n
    \Bigl(\partial_{\theta^{(j_1)}}\partial_{\theta^{(j_2)}}\partial_{\theta^{(j_3)}}
    {\bf{\Sigma}}(\theta)^{-1}\Bigr)
    \Bigl[\bigl(\Delta_i \mathbb{X}\bigr)^{\otimes 2}\Bigr]\\
    &\qquad\qquad\qquad\qquad\quad
    -\frac{1}{2}
    \partial_{\theta^{(j_1)}}\partial_{\theta^{(j_2)}}\partial_{\theta^{(j_3)}}
    \log\det{\bf{\Sigma}}(\theta)
\end{align*}
for $j_1,j_2,j_3=1,\cdots,q$, it holds from Lemma \ref{EXVX} that
\begin{align*}
    &\quad\ {\bf{E}}
    \Biggl[\biggl(\frac{1}{n}
    \Bigl|\partial_{\theta^{(j_1)}}\partial_{\theta^{(j_2)}}\partial_{\theta^{(j_3)}}{\rm{H}}_n(\theta)
    \Bigr|\biggr)^{L}\Biggr]\\
    &\leq\frac{C_{L}}{n^{L}h_n^{L}}\times n^{L-1}\sum_{i=1}^n{\bf{E}}
    \Biggl[
    \biggl|\Bigl(
    \partial_{\theta^{(j_1)}}\partial_{\theta^{(j_2)}}
    \partial_{\theta^{(j_3)}}
    {\bf{\Sigma}}(\theta)^{-1}\Bigr)\Bigl[\bigl(\Delta_i \mathbb{X}\bigr)^{\otimes 2}\Bigr]
    \biggr|^{L}\Biggr]\\
    &\qquad\qquad\qquad\qquad\qquad\qquad\qquad\qquad
    +C_L\biggl|\partial_{\theta^{(j_1)}}\partial_{\theta^{(j_2)}}\partial_{\theta^{(j_3)}}
    \log\det{\bf{\Sigma}}(\theta)\biggr|^L\\
    &\leq\frac{C_{L}}{nh_n^{L}}\biggl|\partial_{\theta^{(j_1)}}\partial_{\theta^{(j_2)}}\partial_{\theta^{(j_3)}}
    {\bf{\Sigma}}(\theta)^{-1}\biggr|^L
    \sum_{i=1}^n{\bf{E}}
    \biggl[
    \bigl|\Delta_i \mathbb{X}
    \bigr|^{2L}\biggr]\\
    &\qquad\qquad\qquad\qquad\qquad\qquad\qquad
    +C_L\biggl|\partial_{\theta^{(j_1)}}\partial_{\theta^{(j_2)}}
    \partial_{\theta^{(j_3)}}
    \log\det{\bf{\Sigma}}(\theta)\biggr|^L\\
    &\leq C_L\sup_{\theta\in\Theta}\biggl|\partial_{\theta^{(j_1)}}\partial_{\theta^{(j_2)}}\partial_{\theta^{(j_3)}}
    {\bf{\Sigma}}(\theta)^{-1}\biggr|^L+C_L\sup_{\theta\in\Theta}
    \biggl|\partial_{\theta^{(j_1)}}\partial_{\theta^{(j_2)}}
    \partial_{\theta^{(j_3)}}
    \log\det{\bf{\Sigma}}(\theta)\biggr|^L\\
    &\leq C_L
\end{align*}
for all $L>1$, which yields
\begin{align}
    \sup_{n\in\mathbb{N}}\sup_{\theta\in\Theta}{\bf{E}}
    \Biggl[\biggl(\frac{1}{n}
    \Bigl|\partial_{\theta^{(j_1)}}\partial_{\theta^{(j_2)}}
    \partial_{\theta^{(j_3)}}{\rm{H}}_n(\theta)
    \Bigr|\biggr)^{L}\Biggr]<\infty. \label{H3}
\end{align}
Similarly, one has
\begin{align}
    \sup_{n\in\mathbb{N}}\sup_{\theta\in\Theta}{\bf{E}}
    \Biggl[\biggl(\frac{1}{n}
    \Bigl|\partial_{\theta}\partial_{\theta^{(j_1)}}\partial_{\theta^{(j_2)}}
    \partial_{\theta^{(j_3)}}{\rm{H}}_n(\theta)
    \Bigr|\biggr)^{L}\Biggr]<\infty \label{H4}
\end{align}
for any $L>1$. 
By using the Sobolev inequality, it is shown that
\begin{align*}
    &\quad\ {\bf{E}}
    \Biggl[\biggl(\frac{1}{n}\sup_{\theta\in\Theta}
    \Bigl|\partial_{\theta^{(j_1)}}\partial_{\theta^{(j_2)}}
    \partial_{\theta^{(j_3)}}
    {\rm{H}}_n(\theta)
    \Bigr|\biggr)^{L}\Biggr]\\
    &\leq{\bf{E}}\Biggl[\int_{\Theta}
    \biggl|\frac{1}{n}\partial_{\theta^{(j_1)}}\partial_{\theta^{(j_2)}}\partial_{\theta^{(j_3)}}
    {\rm{H}}_n(\theta)
    \biggr|^{L}+\biggl|\frac{1}{n}\partial_{\theta}\partial_{\theta^{(j_1)}}\partial_{\theta^{(j_2)}}\partial_{\theta^{(j_3)}}
    {\rm{H}}_n(\theta)
    \biggr|^{L}d\theta\Biggr]\\
    &=\int_{\Theta}{\bf{E}}\Biggl[
    \biggl|\frac{1}{n}\partial_{\theta^{(j_1)}}\partial_{\theta^{(j_2)}}\partial_{\theta^{(j_3)}}
    {\rm{H}}_n(\theta)
    \biggr|^{L}\Biggr]d\theta+\int_{\Theta}{\bf{E}}\Biggl[\biggl|\frac{1}{n}\partial_{\theta}\partial_{\theta^{(j_1)}}\partial_{\theta^{(j_2)}}\partial_{\theta^{(j_3)}}
    {\rm{H}}_n(\theta)
    \biggr|^{L}\Biggr]d\theta\\
    &\leq\int_{\Theta}\sup_{\theta\in\Theta}{\bf{E}}\Biggl[
    \biggl|\frac{1}{n}\partial_{\theta^{(j_1)}}\partial_{\theta^{(j_2)}}\partial_{\theta^{(j_3)}}
    {\rm{H}}_n(\theta)
    \biggr|^{L}\Biggr]d\theta\\
    &\qquad\qquad\qquad\qquad\qquad+\int_{\Theta}\sup_{\theta\in\Theta}{\bf{E}}\Biggl[
    \biggl|\frac{1}{n}\partial_{\theta}\partial_{\theta^{(j_1)}}\partial_{\theta^{(j_2)}}\partial_{\theta^{(j_3)}}
    {\rm{H}}_n(\theta)
    \biggr|^{L}\Biggr]d\theta\\
    &\leq C_{\Theta}\sup_{\theta\in\Theta}{\bf{E}}\Biggl[
    \biggl|\frac{1}{n}\partial_{\theta^{(j_1)}}\partial_{\theta^{(j_2)}}\partial_{\theta^{(j_3)}}
    {\rm{H}}_n(\theta)
    \biggr|^{L}\Biggr]\\
    &\qquad\qquad\qquad\qquad\qquad+C_{\Theta}\sup_{\theta\in\Theta}{\bf{E}}\Biggl[
    \biggl|\frac{1}{n}\partial_{\theta}\partial_{\theta^{(j_1)}}\partial_{\theta^{(j_2)}}\partial_{\theta^{(j_3)}}
    {\rm{H}}_n(\theta)
    \biggr|^{L}\Biggr]
\end{align*}
for all $L>q$, so that we obtain from (\ref{H3}) and (\ref{H4}) that for all $\varepsilon\in(0,\frac{1}{2})$ and $L>0$,
\begin{align*}
    \sup_{n\in\mathbb{N}}{\bf{E}}
    \Biggl[\biggl(\frac{1}{n}\sup_{\theta\in\Theta}
    \Bigl|\partial_{\theta^{(j_1)}}\partial_{\theta^{(j_2)}}
    \partial_{\theta^{(j_3)}}{\rm{H}}_n(\theta)
    \Bigr|\biggr)^{L}\Biggr]<\infty
\end{align*}
for $j_1,j_2,j_3=1,\cdots,q$.
\end{proof}
\begin{lemma}\label{Zine}
Under {\bf{[A]}} and {\bf{[B1]}}, for all $L>0$, there exists $C_L>0$ such that
\begin{align*}
    {\bf{P}}\left(\sup_{u\in {\rm{V}}_n(r)}{\rm{Z}}_n(\mathbb{X}_n,u;\theta_0)\geq e^{-r}\right)\leq\frac{C_L}{r^L}
\end{align*}
for all $r>0$ and $n\in\mathbb{N}$.
\end{lemma}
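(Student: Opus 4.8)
The plan is to prove a polynomial-type large deviation inequality in the spirit of the quasi-likelihood analysis, using Lemmas \ref{deltalemma}--\ref{H3lemma} as the probabilistic inputs. Write $\Delta_n=\frac{1}{\sqrt n}\partial_\theta{\rm H}_n(\mathbb X_n,\theta_0)$ and $\Gamma_n=-\frac1n\partial^2_\theta{\rm H}_n(\mathbb X_n,\theta_0)$, and set $\Gamma={\bf I}(\theta_0)$; by $[{\bf B1}]\,({\rm b})$ and ${\bf W}(\theta_0)>0$ the matrix $\Gamma$ is positive definite, and we let $\lambda=\lambda_{\min}(\Gamma)>0$. Since $\Theta$ is compact, every $u\in\mathbb U_n$ obeys $|u|\le\sqrt n\,D$ with $D=\mathrm{diam}\,\Theta$, so ${\rm V}_n(r)$ is empty unless $r\le\sqrt n\,D$; hence we may assume $n\ge(r/D)^2$, and this is exactly what turns the $n^{-a}$ decay of Lemmas \ref{gammalemma}, \ref{Ylemma}, \ref{H3lemma} into decay in $r$. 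Because the claim is vacuous once $C_L/r^L\ge1$, it suffices to treat $r\ge r_\ast$ for a fixed $r_\ast\ge 8/\lambda$.

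First I would Taylor expand $s\mapsto{\rm H}_n(\mathbb X_n,\theta_0+su/\sqrt n)$ to third order, obtaining
\[
  \log{\rm Z}_n(\mathbb X_n,u;\theta_0)=\Delta_n^\top u-\tfrac12\Gamma_n[u^{\otimes2}]+\mathrm{rem}_n(u),
\]
where $\mathrm{rem}_n(u)$ is the third-order term, bounded in modulus by $\tfrac16 B_n|u|^3/\sqrt n$ with $B_n=\tfrac1n\sup_{\theta\in\Theta}|\partial^3_\theta{\rm H}_n(\mathbb X_n,\theta)|$; by Lemma \ref{H3lemma}, $B_n$ has moments of every order bounded in $n$. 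The argument then splits ${\rm V}_n(r)$ according to the size of $|\theta-\theta_0|=|u|/\sqrt n$. Fix $\varepsilon'\in(0,\varepsilon/2)$ (so $\tfrac12-2\varepsilon'>0$), with $\varepsilon\in(0,\tfrac12)$ as in Lemmas \ref{gammalemma}, \ref{Ylemma}, and a small constant $c>0$.

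On the near shell $r\le|u|\le c\,n^{1/2-\varepsilon'}$, on the event $\{B_n\le n^{\varepsilon'}\}\cap\{|\Gamma_n-\Gamma|\le\lambda/8\}$ one has $|\mathrm{rem}_n(u)|\le\tfrac\lambda8|u|^2$ (for $c$ sufficiently small) and $\Gamma_n[u^{\otimes2}]\ge(\lambda-\lambda/8)|u|^2$, whence $\log{\rm Z}_n(\mathbb X_n,u;\theta_0)\le|\Delta_n|\,|u|-\tfrac\lambda4|u|^2$; on the additional event $\{|\Delta_n|\le\tfrac\lambda8 r\}$ the right-hand side is decreasing in $|u|$ on $[r,\infty)$ and therefore at most $-\tfrac\lambda8 r^2\le-r$. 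On the far shell $|\theta-\theta_0|>c\,n^{-\varepsilon'}$ I would instead use $[{\bf B1}]\,({\rm a})$ in the form ${\rm Y}(\theta)\le-\chi|\theta-\theta_0|^2$ together with $\log{\rm Z}_n(\mathbb X_n,u;\theta_0)=n\,{\rm Y}_n(\mathbb X_n,\theta;\theta_0)\le n\,{\rm Y}(\theta)+n\sup_{\theta\in\Theta}|{\rm Y}_n-{\rm Y}|$, giving the uniform bound $\log{\rm Z}_n\le-\chi c^2 n^{1-2\varepsilon'}+n\sup_{\theta\in\Theta}|{\rm Y}_n-{\rm Y}|$.

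It then remains to bound the probabilities of the complementary (bad) events by $O(r^{-L})$. The event $\{|\Delta_n|>\tfrac\lambda8 r\}$ is handled directly by Lemma \ref{deltalemma} and Markov's inequality; the events $\{|\Gamma_n-\Gamma|>\lambda/8\}$ and $\{B_n>n^{\varepsilon'}\}$ by Lemmas \ref{gammalemma} and \ref{H3lemma}, yielding $C_L n^{-\varepsilon L}$ and $C_L n^{-\varepsilon' L}$, which become powers of $1/r$ since $n\ge(r/D)^2$. On the far shell, the event $\{\,\log{\rm Z}_n\ge-r\,\}$ forces $\sup_{\theta\in\Theta}|{\rm Y}_n-{\rm Y}|\ge\tfrac12\chi c^2 n^{-2\varepsilon'}$ provided $r\le\tfrac12\chi c^2 n^{1-2\varepsilon'}$, and Lemma \ref{Ylemma} with Markov gives $C_L n^{-(\varepsilon-2\varepsilon')L}$; in the complementary range $r>\tfrac12\chi c^2 n^{1-2\varepsilon'}$ the constraints $r\le\sqrt n\,D$ and $\tfrac12-2\varepsilon'>0$ force $n$, and hence $r$, to be bounded, where the estimate is trivial. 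Summing these contributions gives $C_L/r^L$ for $r\ge r_\ast$. I expect the main obstacle to be the intermediate regime: the cutoff exponent $\varepsilon'$ and the truncation level $n^{\varepsilon'}$ for $B_n$ must be chosen so that simultaneously the third-order remainder is absorbed by the quadratic form on the near shell and every discarded event has probability decaying like a power of $n$ (hence of $r$), rather than remaining $O(1)$; keeping the bound uniform over the whole shell ${\rm V}_n(r)$ is what makes this balancing delicate.
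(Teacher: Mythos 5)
Your proof is correct, but it takes a genuinely different route from the paper. The paper's proof of Lemma \ref{Zine} is a short verification of hypotheses: it checks the regularity conditions [A1$^{\prime\prime}$], [A4$^{\prime}$], [A6], [B1] and [B2] of Theorem 3 (c) in Yoshida \cite{Yoshida(2011)} (the polynomial-type large deviation inequality), with Lemmas \ref{deltalemma}--\ref{H3lemma} supplying exactly the required moment bounds, and the paper's {\bf{[B1]}} supplying identifiability and the positive definiteness of ${\bf{I}}(\theta_0)$; the tail bound in $r$, uniform in $n$, then comes out of Yoshida's general machinery. You instead reprove that conclusion from scratch in this particular setting: a third-order Taylor expansion of $\log{\rm{Z}}_n$, a near-shell/far-shell decomposition of ${\rm{V}}_n(r)$, and Markov's inequality fed by the same four lemmas. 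The device that makes your direct argument work---and which is unavailable in Yoshida's general framework---is compactness of $\Theta$: since $|u|\le\sqrt{n}\,D$ on $\mathbb{U}_n$ with $D=\mathrm{diam}\,\Theta$, the set ${\rm{V}}_n(r)$ is empty unless $n\ge(r/D)^2$, which converts the $n^{-aL}$-type decay coming from Lemmas \ref{gammalemma}, \ref{Ylemma} and \ref{H3lemma} into $r^{-L}$ decay, using that $L$ is arbitrary in those lemmas. Your case analysis checks out: on the near shell $r\le|u|\le c\,n^{1/2-\varepsilon'}$ the quadratic term $-\frac12\Gamma_n[u^{\otimes2}]$ dominates both the remainder (after truncating $B_n$ at $n^{\varepsilon'}$ with $\varepsilon'<\varepsilon/2$ and taking $c$ small) and the linear term on $\{|\Delta_n|\le\lambda r/8\}$, giving $\sup\log{\rm{Z}}_n\le-\lambda r^2/8\le-r$ once $r\ge 8/\lambda$; on the far shell the paper's {\bf{[B1]}} (a) plus Lemma \ref{Ylemma} take over; and the leftover regime $r>\frac12\chi c^2n^{1-2\varepsilon'}$ indeed forces $n$, hence $r\le\sqrt{n}D$, to be bounded because $\varepsilon'<1/4$, where the bound is trivial after enlarging $C_L$. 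As for what each approach buys: the paper's argument is brief, stays inside the standard quasi-likelihood-analysis toolbox, and Yoshida's theorem remains applicable in settings where your $n$-versus-$r$ conversion fails (non-compact parameter spaces, different rates); your argument is elementary and self-contained, makes completely transparent how the moment estimates of Lemmas \ref{deltalemma}--\ref{H3lemma} generate the polynomial tail, and shows that for a convex compact $\Theta$ the citation to Yoshida's theorem can be dispensed with entirely.
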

\begin{proof}
It is enough to check the regularity conditions [A1$^{\prime\prime}$], [A4$^{\prime}$], [A6], [B1] and [B2] of Theorem 3 (c) in Yoshida \cite{Yoshida(2011)}. It is supposed that $\alpha$, $\rho_1$, $\rho_2$, $\beta_1$ and $\beta_2$ satisfy [A4$^{\prime}$]:
\begin{align*}
    0<\beta_1<\frac{1}{2},\ \ 0<\rho_1<\min\Bigl\{1,\beta,\frac{2\beta_1}{1-\alpha}\Bigr\},\ \ 2\alpha<\rho_2,\ \ \beta_2\geq 0,\ \
    1-2\beta_2-\rho_2>0, 
\end{align*}
where $\beta=\alpha(1-\alpha)^{-1}$.
For example, we can take $\alpha=\frac{1}{10}$, $\rho_1=\frac{1}{10}$, $\rho_2=\frac{1}{4}$, $\beta_1=\frac{1}{4}$ and $\beta_2=\frac{1}{3}$. For any $L>0$, it follows from Lemmas \ref{deltalemma} and \ref{Ylemma} that 
\begin{align*}
    \sup_{n\in\mathbb{N}}{\bf{E}}\Biggl[\biggl|\frac{1}{\sqrt{n}}\partial_{\theta}{\rm{H}}_n(\theta_0)\biggr|^{M_1}\Biggr]<\infty
\end{align*}
and
\begin{align*}
    \sup_{n\in\mathbb{N}}{\bf{E}}
    \left[\left(\sup_{\theta\in\Theta}n^{\frac{1}{2}-\beta_1}
    \Bigl|{\rm{Y}}_n(\theta;\theta_0)-{\rm{Y}}(\theta)\Bigr|\right)^{M_2}\right]<\infty,
\end{align*}
where $M_1=L(1-\rho_1)^{-1}>0$ and $M_2=L(1-2\beta_2-\rho_2)^{-1}>0$, which satisfies [A6]. Furthermore, we see from Lemmas \ref{gammalemma} and \ref{H3lemma} that
\begin{align*}
    \sup_{n\in\mathbb{N}}{\bf{E}}
    \left[\left(\sup_{\theta\in\Theta}\frac{1}{n}
    \Bigl|\partial^{3}_{\theta}{\rm{H}}_n(\theta)
    \Bigr|\right)^{M_3}\right]
    <\infty
\end{align*}
and 
\begin{align*}
    \sup_{n\in\mathbb{N}}{\bf{E}}\Biggl[\biggl|n^{\beta_1}\left\{\frac{1}{n}\partial^2_{\theta}{\rm{H}}_{n}(\theta_0)+{\bf{I}}(\theta_0)\right\}\biggr|^{M_4}\Biggr]
    <\infty
\end{align*}
for all $L>0$, where $M_3=L(\beta-\rho_1)^{-1}>0$ and $M_4=L\bigl(\frac{2\beta_1}{1-\alpha}-\rho_1\bigr)^{-1}>0$. Hence, [A1$^{\prime\prime}$] is satisfied. It follows from Lemma 35 in Kusano and Uchida \cite{Kusano(2023)} and {\bf{[B1]}} (b) that ${\bf{I}}(\theta_0)$ is a positive definite matrix, which satisfies [B1]. Moreover, {\bf{[B1]}} (a) yields [B2]. 
\end{proof}
$\mathbb{E}$ denotes the expectation under the probability measure on the probability space on which $\zeta$ is realized.
\begin{lemma}\label{moment}
Under {\bf{[A]}} and {\bf{[B1]}}, for all $L>0$,
\begin{align*}
    \sup_{n\in\mathbb{N}}{\bf{E}}_{\mathbb{X}_n}\Biggl[\Bigl|\sqrt{n}(\hat{\theta}_{n}-\theta_{0})\Bigr|^L\Biggr]<\infty
\end{align*}
and for $f\in C_{\uparrow}(\mathbb{R}^{q})$,
\begin{align*}
   {\bf{E}}_{\mathbb{X}_n}\biggl[f\Bigl(\sqrt{n}(\hat{\theta}_{n}-\theta_{0})\Bigr)\biggr]\longrightarrow\mathbb{E}\biggl[f\Bigl({\bf{I}}(\theta_{0})^{-\frac{1}{2}}\zeta\Bigr)\biggr]
\end{align*}
as $n\longrightarrow\infty$.
\end{lemma}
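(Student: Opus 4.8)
The plan is to derive both assertions from the polynomial type large deviation inequality established in Lemma \ref{Zine}, together with the weak convergence of Lemma \ref{thetaprob2}. The first assertion is a tail estimate for $\hat{u}_n=\sqrt{n}(\hat{\theta}_n-\theta_0)$ that is integrated into a uniform bound on all moments; the second follows by upgrading the weak convergence to convergence of expectations via uniform integrability. Since all the probabilistic difficulty has already been absorbed into Lemma \ref{Zine}, the argument is essentially bookkeeping.

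First I would establish the moment bound. The key observation is that $\hat{u}_n$ is the maximizer of the random field $u\mapsto{\rm{Z}}_n(\mathbb{X}_n,u;\theta_0)$: because $\hat{\theta}_n$ maximizes the quasi-likelihood it also maximizes ${\rm{H}}_n(\mathbb{X}_n,\cdot)$, whence ${\rm{Z}}_n(\mathbb{X}_n,\hat{u}_n;\theta_0)\geq{\rm{Z}}_n(\mathbb{X}_n,0;\theta_0)=1$. Consequently, on the event $\{|\hat{u}_n|\geq r\}$ one has $\hat{u}_n\in{\rm{V}}_n(r)$ (recall $\hat{u}_n\in\mathbb{U}_n$ since $\hat{\theta}_n\in\Theta$), and therefore
\begin{align*}
    \sup_{u\in{\rm{V}}_n(r)}{\rm{Z}}_n(\mathbb{X}_n,u;\theta_0)\geq{\rm{Z}}_n(\mathbb{X}_n,\hat{u}_n;\theta_0)\geq 1\geq e^{-r},
\end{align*}
which yields the inclusion $\{|\hat{u}_n|\geq r\}\subset\bigl\{\sup_{u\in{\rm{V}}_n(r)}{\rm{Z}}_n(\mathbb{X}_n,u;\theta_0)\geq e^{-r}\bigr\}$. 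Applying Lemma \ref{Zine} then gives ${\bf{P}}(|\hat{u}_n|\geq r)\leq C_{L'}r^{-L'}$ for every $L'>0$, uniformly in $n$. Writing ${\bf{E}}_{\mathbb{X}_n}\bigl[|\hat{u}_n|^L\bigr]=L\int_0^\infty r^{L-1}{\bf{P}}(|\hat{u}_n|\geq r)\,dr$, splitting the integral at $r=1$, and bounding the probability by $1$ on $(0,1)$ and by $C_{L'}r^{-L'}$ with $L'=L+2$ on $(1,\infty)$, I obtain $\sup_{n}{\bf{E}}_{\mathbb{X}_n}\bigl[|\hat{u}_n|^L\bigr]<\infty$ for every $L>0$.

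For the second assertion I would combine this bound with Lemma \ref{thetaprob2}, which gives $\hat{u}_n\stackrel{d}{\longrightarrow}{\bf{I}}(\theta_0)^{-\frac{1}{2}}\zeta$. Since $f\in C_{\uparrow}(\mathbb{R}^q)$ is continuous and of polynomial growth, say $|f(x)|\leq C(1+|x|)^{\gamma}$, the continuous mapping theorem yields $f(\hat{u}_n)\stackrel{d}{\longrightarrow}f\bigl({\bf{I}}(\theta_0)^{-\frac{1}{2}}\zeta\bigr)$. Choosing some $\delta>0$ and applying the first assertion with $L=\gamma(1+\delta)$ gives $\sup_n{\bf{E}}_{\mathbb{X}_n}\bigl[|f(\hat{u}_n)|^{1+\delta}\bigr]<\infty$, so that $\{f(\hat{u}_n)\}_n$ is uniformly integrable. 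Weak convergence together with uniform integrability then delivers ${\bf{E}}_{\mathbb{X}_n}\bigl[f(\hat{u}_n)\bigr]\to\mathbb{E}\bigl[f({\bf{I}}(\theta_0)^{-\frac{1}{2}}\zeta)\bigr]$, which is the claim.

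The analytic heavy lifting is entirely contained in Lemma \ref{Zine}, so the main obstacle lies not in the present lemma but in the verification of Yoshida's regularity conditions underpinning that inequality. Within this proof, the only delicate step is the identification of $\hat{u}_n$ as the maximizer of ${\rm{Z}}_n$ together with the normalization ${\rm{Z}}_n(\mathbb{X}_n,0;\theta_0)=1$, as this is precisely what converts the large deviation bound into a tail estimate for $\hat{u}_n$, and hence into the uniform moment bound that drives both conclusions.
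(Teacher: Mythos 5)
Your proposal is correct and follows essentially the same route as the paper's proof: both derive the tail bound ${\bf{P}}(|\hat{u}_n|\geq r)\leq C_L r^{-L}$ from the maximizer property ${\rm{Z}}_n(\mathbb{X}_n,\hat{u}_n;\theta_0)\geq 1$ combined with Lemma \ref{Zine}, and then pass to the expectation convergence via Lemma \ref{thetaprob2}. The only difference is that you spell out the bookkeeping (integrating the tail bound and the uniform integrability step) that the paper leaves implicit.
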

\begin{proof}
Note that $\hat{u}_n\in\mathbb{U}_n$ since $\theta_0+\frac{1}{\sqrt{n}}\hat{u}_n=\hat{\theta}_n\in\Theta$. For all $r>0$, we have
    \begin{align*}
    0&\leq {\rm{H}}_n(\hat{\theta}_n)-{\rm{H}}_n(\theta_0)\\
    &={\rm{H}}_n\Bigl(\theta_0+\frac{1}{\sqrt{n}}\hat{u}_n\Bigr)-{\rm{H}}_n(\theta_0)\\
    &=\log {\rm{Z}}_n(\hat{u}_n;\theta_0)\leq \log \sup_{u\in{\rm{V}}_n(r)}{\rm{Z}}_n(u;\theta_0)
    \end{align*}
on $\{|\hat{u}_n|\geq r\}$, which yields
\begin{align*}
    1\leq \sup_{u\in{\rm{V}}_n(r)}{\rm{Z}}_n(u;\theta_0)
\end{align*}
on $\{|\hat{u}_n|\geq r\}$. For any $L>0$, it holds from Lemma \ref{Zine} that there exists $C_L>0$ such that
\begin{align*}
    {\bf{P}}\Bigl(\bigl|\sqrt{n}(\hat{\theta}_n-\theta_0)\bigr|\geq r\Bigr)&\leq {\bf{P}}\left(\sup_{u\in{\rm{V}}_n(r)}{\rm{Z}}_n(u;\theta_0)\geq 1\right)\\
    &\leq{\bf{P}}\left(\sup_{u\in{\rm{V}}_n(r)}{\rm{Z}}_n(u;\theta_0)\geq e^{-r}\right)
    \leq \frac{C_L}{r^L}
\end{align*}
for all $r>0$ and $n\in\mathbb{N}$, which implies
\begin{align}
    \sup_{n\in\mathbb{N}}{\bf{E}}\Biggl[\Bigl|\sqrt{n}(\hat{\theta}_{n}-\theta_{0})\Bigr|^L\Biggr]<\infty. \label{thetamoment}
\end{align}
Furthermore, we see from (\ref{thetamoment}) and Lemma \ref{thetaprob2} that for all $f\in C_{\uparrow}(\mathbb{R}^{q})$,
\begin{align*}
   {\bf{E}}\Biggl[f\Bigl(\sqrt{n}(\hat{\theta}_{n}-\theta_{0})\Bigr)\Biggr]\longrightarrow\mathbb{E}\Biggl[f\Bigl({\bf{I}}(\theta_{0})^{-\frac{1}{2}}\zeta\Bigr)\Biggr]
\end{align*}
as $n\longrightarrow\infty$.
\end{proof}
\begin{proof}[\textbf{Proof of Theorem 1}]
Let us consider the following decomposition:
\begin{align*}
    {\bf{E}}_{\mathbb{X}_n}\biggl[\log {\rm{L}}_{n}(\mathbb{X}_{n},\hat{\theta}_{n})-{\bf{E}}_{\mathbb{Z}_n}\Bigl[\log {\rm{L}}_{n}(\mathbb{Z}_n,\hat{\theta}_{n})\Bigr]\biggr]
    &={\bf{E}}_{\mathbb{X}_n}\biggl[{\rm{H}}_{n}(\mathbb{X}_n,\hat{\theta}_{n})-{\bf{E}}_{\mathbb{Z}_n}\Bigl[{\rm{H}}_{n}(\mathbb{Z}_n,\hat{\theta}_{n})\Bigr]\biggr]\\
    &={\bf{E}}_{\mathbb{X}_n}\biggl[{\rm{H}}_{n}(\mathbb{X}_n,\hat{\theta}_{n})-{\rm{H}}_{n}(\mathbb{X}_n,\theta_0)\biggr]\\
    &\quad+{\bf{E}}_{\mathbb{X}_n}\biggr[{\rm{H}}_{n}(\mathbb{X}_n,\theta_0)\biggr]
    -{\bf{E}}_{\mathbb{Z}_n}\biggl[{\rm{H}}_{n}(\mathbb{Z}_n,\theta_0)\biggr]\\
    &\quad+{\bf{E}}_{\mathbb{Z}_n}\biggl[{\rm{H}}_{n}(\mathbb{Z}_n,\theta_0)\biggr]-
    {\bf{E}}_{\mathbb{X}_n}\biggl[{\bf{E}}_{\mathbb{Z}_n}\Bigl[{\rm{H}}_{n}(\mathbb{Z}_n,\hat{\theta}_{n})\Bigr]\biggr]\\
    &={\rm{D}}_{1,n}+{\rm{D}}_{2,n}+{\rm{D}}_{3,n},
\end{align*}
where 
\begin{align*}
    {\rm{D}}_{1,n}&={\bf{E}}_{\mathbb{X}_n}\biggl[{\rm{H}}_{n}(\mathbb{X}_n,\hat{\theta}_{n})-{\rm{H}}_{n}(\mathbb{X}_n,\theta_0)\biggr],\\
    {\rm{D}}_{2,n}&={\bf{E}}_{\mathbb{X}_n}\biggr[{\rm{H}}_{n}(\mathbb{X}_n,\theta_0)\biggr]
    -{\bf{E}}_{\mathbb{Z}_n}\biggl[{\rm{H}}_{n}(\mathbb{Z}_n,\theta_0)\biggr],\\
    {\rm{D}}_{3,n}&={\bf{E}}_{\mathbb{Z}_n}\biggl[{\rm{H}}_{n}(\mathbb{Z}_n,\theta_0)\biggr]-
    {\bf{E}}_{\mathbb{X}_n}\biggl[{\bf{E}}_{\mathbb{Z}_n}\Bigl[{\rm{H}}_{n}(\mathbb{Z}_n,\hat{\theta}_{n})\Bigr]\biggr].
\end{align*}
First of all, we will prove
\begin{align}
    {\rm{D}}_{1,n}&\longrightarrow \frac{q}{2}\label{D1}
\end{align}
as $n\longrightarrow\infty$. Using the Taylor expansion, one has
\begin{align*}
    {\rm{H}}_{n}(\mathbb{X}_n,\hat{\theta}_{n})-{\rm{H}}_{n}(\mathbb{X}_n,\theta_0)
    &=\sum_{i=1}^{q}\partial_{\theta^{(i)}}{\rm{H}}_{n}(\mathbb{X}_n,\theta_{0})(\hat{\theta}^{(i)}_{n}-\theta^{(i)}_{0})\\
    &\quad+\frac{1}{2}\sum_{i=1}^q\sum_{j=1}^q\partial_{\theta^{(i)}}\partial_{\theta^{(j)}}
    {\rm{H}}_{n}(\mathbb{X}_n,\theta_{0})(\hat{\theta}^{(i)}_{n}-\theta^{(i)}_{0})(\hat{\theta}^{(j)}_{n}-\theta^{(j)}_{0})\\
    &\quad+\frac{1}{2}\sum_{i=1}^q\sum_{j=1}^q\sum_{k=1}^q
    \left(\int_0^1(1-\lambda)^2\partial_{\theta^{(i)}}\partial_{\theta^{(j)}}\partial_{\theta^{(k)}}
    {\rm{H}}_{n}(\mathbb{X}_n,\tilde{\theta}_{n,\lambda})d\lambda\right)\\
    &\qquad\qquad\qquad\qquad\qquad\qquad\times(\hat{\theta}^{(i)}_{n}-\theta^{(i)}_{0})(\hat{\theta}^{(j)}_{n}-\theta^{(j)}_{0})(\hat{\theta}^{(k)}_{n}-\theta^{(k)}_{0})
    \\
    &={\rm{E}}_{1,n}+{\rm{E}}_{2,n}+{\rm{E}}_{3,n},
\end{align*}
where $\tilde{\theta}_{n,\lambda}=\theta_0+\lambda(\hat{\theta}_n-\theta_0)$ and
\begin{align*}
    {\rm{E}}_{1,n}&=\sum_{i=1}^{q}\partial_{\theta^{(i)}}{\rm{H}}_{n}(\mathbb{X}_n,\theta_{0})(\hat{\theta}^{(i)}_{n}-\theta^{(i)}_{0}),\\
    {\rm{E}}_{2,n}&=\frac{1}{2}\sum_{i=1}^q\sum_{j=1}^q\partial_{\theta^{(i)}}\partial_{\theta^{(j)}}
    {\rm{H}}_{n}(\mathbb{X}_n,\theta_{0})(\hat{\theta}^{(i)}_{n}-\theta^{(i)}_{0})(\hat{\theta}^{(j)}_{n}-\theta^{(j)}_{0}),\\
    {\rm{E}}_{3,n}&=\frac{1}{2}\sum_{i=1}^q\sum_{j=1}^q\sum_{k=1}^q
    \left(\int_0^1(1-\lambda)^2\partial_{\theta^{(i)}}\partial_{\theta^{(j)}}\partial_{\theta^{(k)}}
    {\rm{H}}_{n}(\mathbb{X}_n,\tilde{\theta}_{n,\lambda})d\lambda\right)\\
    &\qquad\qquad\qquad\qquad\qquad\qquad\qquad\quad\times
    (\hat{\theta}^{(i)}_{n}-\theta^{(i)}_{0})(\hat{\theta}^{(j)}_{n}-\theta^{(j)}_{0})(\hat{\theta}^{(k)}_{n}-\theta^{(k)}_{0}).
\end{align*}
First, we consider the expectation of $\rm{E}_{1,n}$. Set 
\begin{align*}
    A_n=\Bigl\{\hat{\theta}_n\in{\rm{int}}\Theta\Bigr\}.
\end{align*}
Note that ${\bf{P}}(A_n)\longrightarrow 1$ as $n\longrightarrow\infty$.
By using the Taylor expansion, one gets
\begin{align*}
    0&=\frac{1}{\sqrt{n}}\partial_{\theta^{(i)}}{\rm{H}}_{n}(\mathbb{X}_n,\hat{\theta}_{n})\\
    &=
    \frac{1}{\sqrt{n}}\partial_{\theta^{(i)}}{\rm{H}}_{n}(\mathbb{X}_n,\theta_0)+\frac{1}{n}\sum_{j=1}^q\partial_{\theta^{(i)}}\partial_{\theta^{(j)}}{\rm{H}}_{n}(\mathbb{X}_n,\theta_0)\hat{u}_n^{(j)}\\
    &\quad+\frac{1}{n\sqrt{n}}\sum_{j=1}^q\sum_{k=1}^q\left(\int_{0}^{1}(1-\lambda)\partial_{\theta^{(i)}}\partial_{\theta^{(j)}}\partial_{\theta^{(k)}}{\rm{H}}_{n}(\mathbb{X}_n,\tilde{\theta}_{n,\lambda})d\lambda\right)\hat{u}_n^{(j)}\hat{u}_n^{(k)}
\end{align*}
for $i=1,\cdots,q$ on $A_n$, so that we have
\begin{align}
    \begin{split}
    {\rm{E}}_{1,n}&=\frac{1}{\sqrt{n}}\sum_{i=1}^q\partial_{\theta^{(i)}}{\rm{H}}_{n}(\mathbb{X}_n,\theta_{0})\hat{u}^{(i)}_{n}\\ 
    &=\sum_{i=1}^q\sum_{j=1}^q {\bf{I}}(\theta_0)_{ij}\hat{u}^{(i)}_n\hat{u}^{(j)}_n-\sum_{i=1}^q{\rm{R}}_{1,n}^{(i)}\hat{u}^{(i)}_{n},
     \end{split}\label{E1taylor}
\end{align}
where
\begin{align*}
    {\rm{R}}_{1,n}^{(i)}=\left\{
    \begin{alignedat}{2}   
    \begin{split}
    &\sum_{j=1}^q\left\{\frac{1}{n}\partial_{\theta^{(i)}}\partial_{\theta^{(j)}}{\rm{H}}_{n}(\mathbb{X}_n,\theta_0)+{\bf{I}}(\theta_0)_{ij}\right\}\hat{u}^{(j)}_n \\
    &\quad+\frac{1}{n\sqrt{n}}\sum_{j=1}^q\sum_{k=1}^q\left(\int_{0}^{1}(1-\lambda)\partial_{\theta^{(i)}}\partial_{\theta^{(j)}}\partial_{\theta^{(k)}}{\rm{H}}_{n}(\mathbb{X}_n,\tilde{\theta}_{n,\lambda})d\lambda\right)\hat{u}_n^{(j)}\hat{u}_n^{(k)},
    \end{split} &\quad\bigl(\mbox{on}\ A_{n}\bigr) \\
    &\sum_{j=1}^q {\bf{I}}(\theta_0)_{ij}\hat{u}^{(j)}_n-\frac{1}{\sqrt{n}}\partial_{\theta^{(i)}}{\rm{H}}_{n}(\mathbb{X}_n,\theta_{0}).
    &\quad\bigl(\mbox{on}\ A_{n}^c\bigr)
  \end{alignedat} 
  \right.
\end{align*}
Let
\begin{align*}
    \bar{{\rm{R}}}_{1,n}^{(i)}&=\sum_{j=1}^q\left\{\frac{1}{n}\partial_{\theta^{(i)}}\partial_{\theta^{(j)}}{\rm{H}}_{n}(\mathbb{X}_n,\theta_0)+{\bf{I}}(\theta_0)_{ij}\right\}\hat{u}^{(j)}_n \\
    &\qquad+\frac{1}{n\sqrt{n}}\sum_{j=1}^q\sum_{k=1}^q\left(\int_{0}^{1}(1-\lambda)\partial_{\theta^{(i)}}\partial_{\theta^{(j)}}\partial_{\theta^{(k)}}{\rm{H}}_{n}(\mathbb{X}_n,\tilde{\theta}_{n,\lambda})d\lambda\right)\hat{u}_n^{(j)}\hat{u}_n^{(k)}.
\end{align*}
Since 
\begin{align*}
    &\quad\ {\bf{E}}_{\mathbb{X}_n}\biggl[\bigl|n^{\frac{1}{4}}\bar{{\rm{R}}}_{1,n}^{(i)}\bigr|^{2}\biggr]\\
    &\leq C\sum_{j=1}^q{\bf{E}}_{\mathbb{X}_n}\left[\Biggl|n^{\frac{1}{4}}\left\{\frac{1}{n}\partial_{\theta^{(i)}}\partial_{\theta^{(j)}}{\rm{H}}_{n}(\mathbb{X}_n,\theta_0)+{\bf{I}}(\theta_0)_{ij}\right\}\hat{u}^{(j)}_n\Biggr|^{2}\right]\\
    &\qquad+\frac{C}{\sqrt{n}}\sum_{j=1}^q\sum_{k=1}^q{\bf{E}}_{\mathbb{X}_n}\left[\Biggl|\frac{1}{n}\left(\int_{0}^{1}(1-\lambda)\partial_{\theta^{(i)}}\partial_{\theta^{(j)}}\partial_{\theta^{(k)}}{\rm{H}}_{n}(\mathbb{X}_n,\tilde{\theta}_{n,\lambda})d\lambda\right)\hat{u}_n^{(j)}\hat{u}_n^{(k)}\Biggr|^{2}\right]\\
    &\leq C\sum_{j=1}^q{\bf{E}}_{\mathbb{X}_n}\left[\biggl(n^{\frac{1}{4}}\left|\frac{1}{n}\partial_{\theta^{(i)}}\partial_{\theta^{(j)}}{\rm{H}}_{n}(\mathbb{X}_n,\theta_0)+{\bf{I}}(\theta_0)_{ij}\right|\biggr)^{4}\right]^{\frac{1}{2}}{\bf{E}}_{\mathbb{X}_n}\biggl[\bigl|\hat{u}^{(j)}_n\bigr|^{4}\biggr]^{\frac{1}{2}}\\
    &\qquad+\frac{C}{\sqrt{n}}\sum_{j=1}^q\sum_{k=1}^q{\bf{E}}_{\mathbb{X}_n}\left[\biggl(\frac{1}{n}\sup_{\theta\in\Theta}\Bigl|\partial_{\theta^{(i)}}\partial_{\theta^{(j)}}\partial_{\theta^{(k)}}{\rm{H}}_{n}(\mathbb{X}_n,\theta)\Bigr|\biggr)^{4}\right]^{\frac{1}{2}}\\
    &\qquad\qquad\qquad\qquad\qquad\qquad\qquad\qquad\qquad\qquad\times
    {\bf{E}}_{\mathbb{X}_n}\biggl[\bigl|\hat{u}^{(j)}_n\bigr|^{8}\biggr]^{\frac{1}{4}}
    {\bf{E}}_{\mathbb{X}_n}\biggl[\bigl|\hat{u}^{(k)}_n\bigr|^{8}\biggr]^{\frac{1}{4}},
\end{align*}
it holds from Lemmas \ref{gammalemma}, \ref{H3lemma} and \ref{moment} that
\begin{align}
    \sup_{n\in\mathbb{N}}{\bf{E}}_{\mathbb{X}_n}\biggl[\bigl|n^{\frac{1}{4}}\bar{{\rm{R}}}_{1,n}^{(i)}\bigr|^{2}\biggr]<\infty. \label{R}
\end{align}
Consequently, we see from (\ref{R}) and Lemma \ref{moment} that
\begin{align*}
    \left|{\bf{E}}_{\mathbb{X}_n}
    \Biggl[\sum_{i=1}^q{\rm{R}}_{1,n}^{(i)}\hat{u}^{(i)}_{n}1_{A_n}\Biggr]\right|
    &=\left|{\bf{E}}_{\mathbb{X}_n}
    \Biggl[\sum_{i=1}^q\bar{{\rm{R}}}_{1,n}^{(i)}\hat{u}^{(i)}_{n}1_{A_n}\Biggr]\right|\\
    &\leq \sum_{i=1}^q{\bf{E}}_{\mathbb{X}_n}\biggl[\bigl|\bar{{\rm{R}}}_{1,n}^{(i)}\bigr|\bigl|\hat{u}^{(i)}_{n}\bigr|\biggr]\\
    &\leq \sum_{i=1}^q{\bf{E}}_{\mathbb{X}_n}\biggl[\bigl|\bar{{\rm{R}}}_{1,n}^{(i)}\bigr|^{2}\biggr]^{\frac{1}{2}}{\bf{E}}_{\mathbb{X}_n}\biggl[\bigl|\hat{u}_n^{(i)}\bigr|^{2}\biggr]^{\frac{1}{2}}\\
    &\leq \frac{1}{n^{\frac{1}{4}}}\sum_{i=1}^q\sup_{n\in\mathbb{N}}{\bf{E}}_{\mathbb{X}_n}\biggl[\bigl|n^{\frac{1}{4}}\bar{{\rm{R}}}_{1,n}^{(i)}\bigr|^{2}\biggr]^{\frac{1}{2}}
    \sup_{n\in\mathbb{N}}{\bf{E}}_{\mathbb{X}_n}
    \biggl[\bigl|\hat{u}_n^{(i)}\bigr|^{2}\biggr]^{\frac{1}{2}}
    \longrightarrow 0
\end{align*}
as $n\longrightarrow\infty$, which yields
\begin{align}
    {\bf{E}}_{\mathbb{X}_n}
    \left[\sum_{i=1}^q{\rm{R}}_{1,n}^{(i)}\hat{u}^{(i)}_{n}1_{A_n}\right]
    \longrightarrow 0 \label{RA}
\end{align}
as $n\longrightarrow\infty$. Set
\begin{align*}
    \underline{{\rm{R}}}_{1,n}^{(i)}=\sum_{j=1}^q {\bf{I}}(\theta_0)_{ij}\hat{u}^{(j)}_n-\frac{1}{\sqrt{n}}\partial_{\theta^{(i)}}{\rm{H}}_{n}(\mathbb{X}_n,\theta_{0}).
\end{align*}
Using Lemmas \ref{deltalemma} and \ref{moment}, we obtain
\begin{align*}
    {\bf{E}}_{\mathbb{X}_n}\biggl[\bigl|\underline{{\rm{R}}}
    _{1,n}^{(i)}\bigr|^2\biggr]&\leq C\sum_{j=1}^q{\bf{E}}_{\mathbb{X}_n}\Biggl[
    \Bigl|{\bf{I}}(\theta_0)_{ij}\hat{u}^{(j)}_n\Bigr|^2\Biggr]+C
    {\bf{E}}_{\mathbb{X}_n}\Biggl[
    \biggl|\frac{1}{\sqrt{n}}\partial_{\theta^{(i)}}{\rm{H}}_{n}(\mathbb{X}_n,\theta_{0})\biggr|^2\Biggr]\\
    &\leq C\sum_{j=1}^q\sup_{n\in\mathbb{N}}{\bf{E}}_{\mathbb{X}_n}\biggl[
    \bigl|\hat{u}^{(j)}_n\bigr|^2\biggr]+C\sup_{n\in\mathbb{N}}{\bf{E}}_{\mathbb{X}_n}\Biggl[
    \biggl|\frac{1}{\sqrt{n}}\partial_{\theta^{(i)}}{\rm{H}}_{n}(\mathbb{X}_n,\theta_{0})\biggr|^2\Biggr]<\infty,
\end{align*}
which implies
\begin{align}
    \sup_{n\in\mathbb{N}}{\bf{E}}_{\mathbb{X}_n}\biggl[\bigl|\underline{{\rm{R}}}
    _{1,n}^{(i)}\bigr|^2\biggr]<\infty. \label{supRbar}
\end{align}
It follows from (\ref{supRbar}) and Lemma \ref{moment} that 
\begin{align*}
    \left|{\bf{E}}_{\mathbb{X}_n}
    \left[\sum_{i=1}^q{\rm{R}}_{1,n}^{(i)}\hat{u}^{(i)}_{n}1_{A_n^c}\right]\right|
    &=\left|{\bf{E}}_{\mathbb{X}_n}\left[\sum_{i=1}^q{\underline{\rm{R}}}_{1,n}^{(i)}\hat{u}^{(i)}_{n}1_{A_n^c}\right]\right|\\
    &\leq \sum_{i=1}^q{\bf{E}}_{\mathbb{X}_n}\biggl[\bigl|\underline{{\rm{R}}}
    _{1,n}^{(i)}\bigr|\bigl|\hat{u}^{(i)}_{n}\bigr|1_{A_n^c}\biggr]\\
    &\leq \sum_{i=1}^q{\bf{E}}_{\mathbb{X}_n}\biggl[\bigl|\underline{{\rm{R}}}
    _{1,n}^{(i)}\bigr|^2\biggr]^{\frac{1}{2}}{\bf{E}}_{\mathbb{X}_n}\biggl[
    \bigl|\hat{u}^{(i)}_{n}\bigr|^4\biggr]^{\frac{1}{4}}{\bf{E}}_{\mathbb{X}_n}\Bigl[1_{A_n^c}\Bigr]^{\frac{1}{4}}\\
    &\leq {\bf{P}}\bigl(A_n^c\bigr)^{\frac{1}{4}}\sum_{i=1}^q\sup_{n\in\mathbb{N}}{\bf{E}}_{\mathbb{X}_n}\biggl[\bigl|\underline{{\rm{R}}}
    _{1,n}^{(i)}\bigr|^2\biggr]^{\frac{1}{2}}\sup_{n\in\mathbb{N}}{\bf{E}}_{\mathbb{X}_n}\biggl[
    \bigl|\hat{u}^{(i)}_{n}\bigr|^4\biggr]^{\frac{1}{4}}\\
    &\longrightarrow 0
\end{align*}
as $n\longrightarrow\infty$, so that one gets
\begin{align}
    {\bf{E}}_{\mathbb{X}_n}
    \left[\sum_{i=1}^q{\rm{R}}_{1,n}^{(i)}\hat{u}^{(i)}_{n}1_{A_n^c}\right]
    \longrightarrow 0 \label{RAc}
\end{align}
as $n\longrightarrow\infty$. Hence, it holds from (\ref{RA}) and (\ref{RAc}) that 
\begin{align}
    \begin{split}
    {\bf{E}}_{\mathbb{X}_n}
    \left[\sum_{i=1}^q{\rm{R}}_{1,n}^{(i)}\hat{u}^{(i)}_{n}\right]&=
    {\bf{E}}_{\mathbb{X}_n}
    \left[\sum_{i=1}^q{\rm{R}}_{1,n}^{(i)}\hat{u}^{(i)}_{n}1_{A_n}\right]+{\bf{E}}_{\mathbb{X}_n}
    \left[\sum_{i=1}^q{\rm{R}}_{1,n}^{(i)}\hat{u}^{(i)}_{n}1_{A_n^c}\right]
    \longrightarrow 0
    \end{split}\label{Ru}
\end{align}
as $n\longrightarrow\infty$. Let 
\begin{align*}
    f_1(x)=x^{\top}{\bf{I}}(\theta_0)x
\end{align*}
for $x\in\mathbb{R}^q$. Since $f_1\in C_{\uparrow}(\mathbb{R}^{q})$, we see from Lemma \ref{moment} that
\begin{align}
    \begin{split}
     {\bf{E}}_{\mathbb{X}_n}\biggl[\hat{u}_n^{\top}
    {\bf{I}}(\theta_0)\hat{u}_n\biggr]
    &={\bf{E}}_{\mathbb{X}_n}\Bigl[f_1\bigl(\hat{u}_n\bigr)\Bigr]\\
    &\longrightarrow \mathbb{E}\biggl[f_1\Bigl({\bf{I}}(\theta_0)^{-\frac{1}{2}}\zeta\Bigr)\biggr]=\mathbb{E}\Bigl[\zeta^{\top}\zeta\Bigr]=q
    \end{split}\label{uIu}
\end{align}
as $n\longrightarrow\infty$. Therefore, (\ref{E1taylor}), (\ref{Ru}) and (\ref{uIu}) show
\begin{align}
\begin{split}
    {\bf{E}}_{\mathbb{X}_n}\Bigl[{\rm{E}}_{1,n}\Bigr]&=
    {\bf{E}}_{\mathbb{X}_n}\left[\sum_{i=1}^q\sum_{j=1}^q {\bf{I}}(\theta_0)_{ij}\hat{u}^{(i)}_n\hat{u}^{(j)}_n\right]
    -{\bf{E}}_{\mathbb{X}_n}
    \left[\sum_{i=1}^q{\rm{R}}_{1,n}^{(i)}\hat{u}^{(i)}_{n}\right]\longrightarrow q
    \label{E1}
\end{split}
\end{align}
as $n\longrightarrow\infty$. Next, the expectation of $\rm{E}_{2,n}$ is considered. Note that
\begin{align*}
    {\rm{E}}_{2,n}
    &=-\frac{1}{2}\sum_{i=1}^q\sum_{j=1}^q {\bf{I}}(\theta_0)_{ij}\hat{u}^{(i)}_n\hat{u}^{(j)}_n+\frac{1}{2}\sum_{i=1}^q\sum_{j=1}^q {\rm{R}}_{2,n,ij}\hat{u}^{(i)}_n\hat{u}^{(j)}_n,
\end{align*}
where
\begin{align*}
    {\rm{R}}_{2,n,ij}=\frac{1}{n}\partial_{\theta^{(i)}}\partial_{\theta^{(j)}}{\rm{H}}_{n}(\mathbb{X}_n,\theta_{0})+{\bf{I}}(\theta_0)_{ij}
\end{align*}
for $i,j=1,\cdots, q$. 
By using Lemmas \ref{gammalemma} and \ref{moment}, it is shown that
\begin{align*}
    &\quad\ \left|{\bf{E}}_{\mathbb{X}_n}
    \Biggl[\sum_{i=1}^q\sum_{j=1}^q {\rm{R}}_{2,n,ij}\hat{u}^{(i)}_n\hat{u}^{(j)}_n\Biggr]\right|\\
    &\leq\sum_{i=1}^q\sum_{j=1}^q{\bf{E}}_{\mathbb{X}_n}\biggl[\bigl|{\rm{R}}_{2,n,ij}\bigr|\bigl|\hat{u}^{(i)}_n\bigr|\bigl|\hat{u}^{(j)}_n\bigr|\biggr]\\
    &\leq\sum_{i=1}^q\sum_{j=1}^q{\bf{E}}_{\mathbb{X}_n}\biggl[\bigl|{\rm{R}}_{2,n,ij}\bigr|^2\biggr]^{\frac{1}{2}}{\bf{E}}_{\mathbb{X}_n}\biggl[\bigl|\hat{u}^{(i)}_n\bigr|^4\biggr]^{\frac{1}{4}}{\bf{E}}_{\mathbb{X}_n}\biggl[\bigl|\hat{u}^{(j)}_n\bigr|^4\biggr]^{\frac{1}{4}}\\
    &\leq\frac{1}{n^{\frac{1}{4}}}\sum_{i=1}^q\sum_{j=1}^q\sup_{n\in\mathbb{N}}{\bf{E}}_{\mathbb{X}_n}\left[\biggl(n^{\frac{1}{4}}\left|\frac{1}{n}\partial_{\theta^{(i)}}\partial_{\theta^{(j)}}{\rm{H}}_{n}(\mathbb{X}_n,\theta_0)+{\bf{I}}(\theta_0)_{ij}\right|\biggr)^{2}\right]^{\frac{1}{2}}\\
    &\qquad\qquad\qquad\qquad\qquad\qquad\qquad\qquad\qquad\times \sup_{n\in\mathbb{N}}{\bf{E}}_{\mathbb{X}_n}
    \biggl[\bigl|\hat{u}_n^{(i)}\bigr|^{4}\biggr]^{\frac{1}{4}}\sup_{n\in\mathbb{N}}{\bf{E}}_{\mathbb{X}_n}
    \biggl[\bigl|\hat{u}_n^{(j)}\bigr|^{4}\biggr]^{\frac{1}{4}}\\
    &\longrightarrow 0
\end{align*}
as $n\longrightarrow\infty$. Thus, it follows from (\ref{uIu}) that
\begin{align}
    \begin{split}
    {\bf{E}}_{\mathbb{X}_n}\Bigl[{\rm{E}}_{2,n}\Bigr]&=-\frac{1}{2}{\bf{E}}_{\mathbb{X}_n}\left[\sum_{i=1}^q\sum_{j=1}^q {\bf{I}}(\theta_0)_{ij}\hat{u}^{(i)}_n\hat{u}^{(j)}_n\right]+\frac{1}{2}{\bf{E}}_{\mathbb{X}_n}
    \left[\sum_{i=1}^q\sum_{j=1}^q {\rm{R}}_{2,n,ij}\hat{u}^{(i)}_n\hat{u}^{(j)}_n\right]\\
    &\longrightarrow -\frac{q}{2}\label{E2}
    \end{split}
\end{align}
as $n\longrightarrow\infty$. Note that 
\begin{align*}
    {\rm{E}}_{3,n}&=\sum_{i=1}^q\sum_{j=1}^q\sum_{k=1}^q{\rm{R}}_{3,n,ijk}
    \hat{u}_n^{(i)}\hat{u}_n^{(j)}\hat{u}_n^{(k)},
\end{align*}
where
\begin{align*}
    {\rm{R}}_{3,n,ijk}=\frac{1}{2n\sqrt{n}}\int_0^1(1-\lambda)^2\partial_{\theta^{(i)}}\partial_{\theta^{(j)}}\partial_{\theta^{(k)}}
    {\rm{H}}_{n}(\mathbb{X}_n,\tilde{\theta}_{n,\lambda})d\lambda
\end{align*}
for $i,j,k=1,\cdots,q$. It holds from Lemmas \ref{H3lemma} and \ref{moment} that
\begin{align*}
    \biggl|{\bf{E}}_{\mathbb{X}_n}\Bigl[{\rm{E}}_{3,n}\Bigr]\biggr|
    &\leq\frac{1}{\sqrt{n}}\sum_{i=1}^q\sum_{j=1}^q\sum_{k=1}^q{\bf{E}}_{\mathbb{X}_n}\left[\left|\frac{1}{n}\int_0^1(1-\lambda)^2\partial_{\theta^{(i)}}\partial_{\theta^{(j)}}\partial_{\theta^{(k)}}
    {\rm{H}}_{n}(\mathbb{X}_n,\tilde{\theta}_{n,\lambda})d\lambda\right|^2\right]^{\frac{1}{2}}\\
    &\qquad\qquad\qquad\qquad\qquad\qquad\quad
    \times{\bf{E}}_{\mathbb{X}_n}
    \biggl[\bigl|\hat{u}_n^{(i)}\bigr|^{4}\biggr]^{\frac{1}{4}}{\bf{E}}_{\mathbb{X}_n}
    \biggl[\bigl|\hat{u}_n^{(j)}\bigr|^{8}\biggr]^{\frac{1}{8}}{\bf{E}}_{\mathbb{X}_n}
    \biggl[\bigl|\hat{u}_n^{(k)}\bigr|^{8}\biggr]^{\frac{1}{8}}\\
    &\leq\frac{1}{\sqrt{n}}\sum_{i=1}^q\sum_{j=1}^q\sum_{k=1}^q\sup_{n\in\mathbb{N}}{\bf{E}}_{\mathbb{X}_n}\left[\biggl(\frac{1}{n}\sup_{\theta\in\Theta}\Bigr|
    \partial_{\theta^{(i)}}\partial_{\theta^{(j)}}\partial_{\theta^{(k)}}
    {\rm{H}}_{n}(\mathbb{X}_n,\theta)\Bigl|\biggr)^2\right]^{\frac{1}{2}}\\
    &\qquad\qquad\qquad\quad\ \ 
    \times\sup_{n\in\mathbb{N}}{\bf{E}}_{\mathbb{X}_n}
    \biggl[\bigl|\hat{u}_n^{(i)}\bigr|^{4}\biggr]^{\frac{1}{4}}\sup_{n\in\mathbb{N}}{\bf{E}}_{\mathbb{X}_n}
    \biggl[\bigl|\hat{u}_n^{(j)}\bigr|^{8}\biggr]^{\frac{1}{8}}\sup_{n\in\mathbb{N}}{\bf{E}}_{\mathbb{X}_n}
    \biggl[\bigl|\hat{u}_n^{(k)}\bigr|^{8}\biggr]^{\frac{1}{8}}\\
    &\longrightarrow 0
\end{align*}
as $n\longrightarrow\infty$, which yields 
\begin{align}
    {\bf{E}}_{\mathbb{X}_n}\Bigl[{\rm{E}}_{3,n}\Bigr]\longrightarrow 0 \label{E3}
\end{align}
as $n\longrightarrow\infty$. Hence, (\ref{E1}), (\ref{E2}) and (\ref{E3}) show (\ref{D1}). Next, we will prove
\begin{align}
    {\rm{D}}_{3,n}&\longrightarrow \frac{q}{2}\label{D3}
\end{align}
as $n\longrightarrow\infty$. By using the Taylor expansion, one gets
\begin{align*}
    {\rm{H}}_n(\mathbb{Z}_n,\hat{\theta}_n)-{\rm{H}}_n(\mathbb{Z}_n,\theta_0)
    &=\sum_{i=1}^q\partial_{\theta^{(i)}}{\rm{H}}_n(\mathbb{Z}_n,\theta_0)(\hat{\theta}^{(i)}_{n}-\theta^{(i)}_{0})\\
    &\quad+\frac{1}{2}\sum_{i=1}^q\sum_{j=1}^q\partial_{\theta^{(i)}}\partial_{\theta^{(j)}}
    {\rm{H}}_n(\mathbb{Z}_n,\theta_0)(\hat{\theta}^{(i)}_{n}-\theta^{(i)}_{0})(\hat{\theta}^{(j)}_{n}-\theta^{(j)}_{0})\\
    &\quad+\frac{1}{2}\sum_{i=1}^q\sum_{j=1}^q\sum_{k=1}^q\left(\int_0^1(1-\lambda)^2\partial_{\theta^{(i)}}\partial_{\theta^{(j)}}\partial_{\theta^{(k)}}{\rm{H}}_n(\mathbb{Z}_n,\tilde{\theta}_{n,\lambda})d\lambda\right)\\
    &\qquad\qquad\qquad\qquad\qquad\quad\times
    (\hat{\theta}^{(i)}_{n}-\theta^{(i)}_{0})(\hat{\theta}^{(j)}_{n}-\theta^{(j)}_{0})(\hat{\theta}^{(k)}_{n}-\theta^{(k)}_{0})\\
    &={\rm{F}}_{1,n}+{\rm{F}}_{2,n}+{\rm{F}}_{3,n},
\end{align*}
where
\begin{align*}
    {\rm{F}}_{1,n}&=\sum_{i=1}^q\partial_{\theta^{(i)}}
    {\rm{H}}_n(\mathbb{Z}_n,\theta_0)(\hat{\theta}^{(i)}_{n}-\theta^{(i)}_{0}),\\
    {\rm{F}}_{2,n}&=\frac{1}{2}\sum_{i=1}^q\sum_{j=1}^q\partial_{\theta^{(i)}}\partial_{\theta^{(j)}}
    {\rm{H}}_n(\mathbb{Z}_n,\theta_0)(\hat{\theta}^{(i)}_{n}-\theta^{(i)}_{0})(\hat{\theta}^{(j)}_{n}-\theta^{(j)}_{0}),\\
    {\rm{F}}_{3,n}&=\frac{1}{2}\sum_{i=1}^q\sum_{j=1}^q\sum_{k=1}^q\left(\int_0^1(1-\lambda)^2\partial_{\theta^{(i)}}\partial_{\theta^{(j)}}\partial_{\theta^{(k)}}{\rm{H}}_n(\mathbb{Z}_n,\tilde{\theta}_{n,\lambda})d\lambda\right)\\
    &\qquad\qquad\qquad\qquad\qquad\qquad\qquad\qquad\times (\hat{\theta}^{(i)}_{n}-\theta^{(i)}_{0})(\hat{\theta}^{(j)}_{n}-\theta^{(j)}_{0})(\hat{\theta}^{(k)}_{n}-\theta^{(k)}_{0}).
\end{align*}
Since it holds from Lemmas \ref{thetaprob1}, \ref{deltalemma} and \ref{moment} that
\begin{align*}
    {\bf{E}}_{\mathbb{Z}_n}\left[\frac{1}{\sqrt{n}}
    \partial_{\theta}{\rm{H}}_n(\mathbb{Z}_n,\theta_0)\right]
    \longrightarrow \mathbb{E}\Bigl[{\bf{I}}(\theta_0)^{\frac{1}{2}}\zeta\Bigr]
    =0
\end{align*}
and
\begin{align*}
    {\bf{E}}_{\mathbb{X}_n}\Bigl[\sqrt{n}(\hat{\theta}_{n}-\theta_{0})\Bigr]
    &\longrightarrow \mathbb{E}\Bigl[{\bf{I}}(\theta_0)^{-\frac{1}{2}}\zeta\Bigr]
    =0
\end{align*}
as $n\longrightarrow\infty$, we have
\begin{align}
    {\bf{E}}_{\mathbb{X}_n}
    \biggl[{\bf{E}}_{\mathbb{Z}_n}\Bigl[{\rm{F}}_{1,n}\Bigr]\biggr]
    &=\sum_{i=1}^q{\bf{E}}_{\mathbb{Z}_n}\Biggl[\frac{1}{\sqrt{n}}
    \partial_{\theta^{(i)}}{\rm{H}}_n(\mathbb{Z}_n,\theta_0)\Biggr]
    {\bf{E}}_{\mathbb{X}_n}\biggl[\sqrt{n}(\hat{\theta}^{(i)}_{n}-\theta^{(i)}_{0})\biggr]\longrightarrow 0
    \label{F1}
\end{align}
as $n\longrightarrow\infty$. Note that
\begin{align*}
    {\rm{F}}_{2,n}&=\frac{1}{2}\hat{u}_n^{\top}\biggl(\frac{1}{n}\partial^2_{\theta}{\rm{H}}_n(\mathbb{Z}_n,\theta_0)\biggr)\hat{u}_n
    =\frac{1}{2}\tr\Biggr\{\biggl(\frac{1}{n}\partial^2_{\theta}{\rm{H}}_n(\mathbb{Z}_n,\theta_0)
    \biggr)\hat{u}_n\hat{u}_n^{\top}\Biggr\}.
\end{align*}
Let 
\begin{align*}
    f_2(x)=xx^{\top}
\end{align*}
for $x\in\mathbb{R}^q$. Lemmas \ref{thetaprob1}, \ref{deltalemma} and \ref{moment}  deduce
\begin{align*}
    {\bf{E}}_{\mathbb{Z}_n}\Biggl[\frac{1}{n}\partial^2_{\theta}{\rm{H}}_n(\mathbb{Z}_n,\theta_0)\Biggr]\longrightarrow-{\bf{I}}(\theta_0)
\end{align*}
and
\begin{align*}
    {\bf{E}}_{\mathbb{X}_n}
    \Bigl[\hat{u}_n\hat{u}_n^{\top}\Bigr]
    &={\bf{E}}_{\mathbb{X}_n}
    \Bigl[f_2\bigl(\hat{u}_n\bigr)\Bigr]\longrightarrow\mathbb{E}\biggl[f_{2}\Bigl({\bf{I}}(\theta_0)^{-\frac{1}{2}}\zeta\Bigr)\biggr]
    ={\bf{I}}(\theta_0)^{-1}
\end{align*}
as $n\longrightarrow\infty$, which implies
\begin{align}
    {\bf{E}}_{\mathbb{X}_n}\biggl[{\bf{E}}_{\mathbb{Z}_n}\Bigl[{\rm{F}}_{2,n}\Bigr]\biggr]
    &=\frac{1}{2}\tr\left\{{\bf{E}}_{\mathbb{Z}_n}\Biggl[\frac{1}{n}\partial^2_{\theta}{\rm{H}}_n(\mathbb{Z}_n,\theta_0)\Biggr]{\bf{E}}_{\mathbb{X}_n}
    \Bigl[\hat{u}_n\hat{u}_n^{\top}\Bigr]\right\}\longrightarrow-\frac{q}{2}\label{F2}
\end{align}
as $n\longrightarrow\infty$. Moreover, we note that
\begin{align*}
    {\rm{F}}_{3,n}=\sum_{i=1}^q\sum_{j=1}^q\sum_{k=1}^q \tilde{{\rm{R}}}_{3,n,ijk}\hat{u}_n^{(i)}\hat{u}_n^{(j)}\hat{u}_n^{(k)},
\end{align*}
where
\begin{align*}
    \tilde{{\rm{R}}}_{3,n,ijk}=\frac{1}{2n\sqrt{n}}\int_{0}^{1}(1-\lambda)^2\partial_{\theta^{(i)}}\partial_{\theta^{(j)}}\partial_{\theta^{(k)}}
    {\rm{H}}_n(\mathbb{Z}_n,
    \tilde{\theta}_{n,\lambda})d\lambda
\end{align*}
for $i,j,k=1,\cdots,q$. Since
\begin{align*}
    {\bf{E}}_{\mathbb{X}_n}\biggl[\Bigl|{\bf{E}}_{\mathbb{Z}_n}\Bigl[\tilde{{\rm{R}}}_{3,n,ijk}\Bigr]\Bigr|^2\biggr]
    &\leq{\bf{E}}_{\mathbb{X}_n}\biggl[{\bf{E}}_{\mathbb{Z}_n}\Bigl[
    \bigl|\tilde{{\rm{R}}}_{3,n,ijk}\bigr|^2\Bigr]\biggr]\\
    &\leq\frac{1}{n}{\bf{E}}_{\mathbb{X}_n}\left[{\bf{E}}_{\mathbb{Z}_n}\left[\left|\frac{1}{n}\int_{0}^{1}(1-\lambda)^2\partial_{\theta^{(i)}}\partial_{\theta^{(j)}}\partial_{\theta^{(k)}}
    {\rm{H}}_n(\mathbb{Z}_n,
    \tilde{\theta}_{n,\lambda})d\lambda\right|^2\right]\right]\\
    &\leq\frac{1}{n}\sup_{n\in\mathbb{N}}
    {\bf{E}}_{\mathbb{Z}_n}\left[\biggl(\frac{1}{n}\sup_{\theta\in\Theta}\Bigl|
    \partial_{\theta^{(i)}}\partial_{\theta^{(j)}}\partial_{\theta^{(k)}}{\rm{H}}_n(\mathbb{Z}_n,\theta)
    \Bigr|\biggr)^2\right],
\end{align*}
it follows from Lemmas \ref{H3lemma} and \ref{moment} that
\begin{align*}
    \Biggl|{\bf{E}}_{\mathbb{X}_n}\biggl[{\bf{E}}_{\mathbb{Z}_n}\Bigl[{\rm{F}}_{3,n}\Bigr]
    \biggr]\Biggr|
    &\leq\sum_{i=1}^q\sum_{j=1}^q\sum_{k=1}^q
    {\bf{E}}_{\mathbb{X}_n}\biggl[\Bigl|{\bf{E}}_{\mathbb{Z}_n}\Bigl[\tilde{{\rm{R}}}_{3,n,ijk}\Bigr]\Bigr|\bigl|
    \hat{u}_n^{(i)}\bigr|\bigl|\hat{u}_n^{(j)}\bigr|\bigl|\hat{u}_n^{(k)}\bigr|\biggr]\\
    &\leq\sum_{i=1}^q\sum_{j=1}^q\sum_{k=1}^q
    {\bf{E}}_{\mathbb{X}_n}\biggl[\Bigr|{\bf{E}}_{\mathbb{Z}_n}\Bigl[ \tilde{{\rm{R}}}_{3,n,ijk}\Bigr]\Bigr|^2\biggr]^{\frac{1}{2}}
    {\bf{E}}_{\mathbb{X}_n}\biggl[\bigl|\hat{u}_n^{(i)}\bigr|^4\biggr]^{\frac{1}{4}}\\
    &\qquad\qquad\qquad\qquad\qquad\qquad\qquad\quad\times {\bf{E}}_{\mathbb{X}_n}\biggl[\bigl|\hat{u}_n^{(j)}\bigr|^8\biggr]^{\frac{1}{8}}{\bf{E}}_{\mathbb{X}_n}\biggl[\bigl|\hat{u}_n^{(k)}\bigr|^8\biggr]^{\frac{1}{8}}\\
    &\leq\frac{1}{\sqrt{n}}\sum_{i=1}^q\sum_{j=1}^q\sum_{k=1}^q\sup_{n\in\mathbb{N}}
    {\bf{E}}_{\mathbb{Z}_n}\left[\biggl(\frac{1}{n}\sup_{\theta\in\Theta}\Bigl|
    \partial_{\theta^{(i)}}\partial_{\theta^{(j)}}\partial_{\theta^{(k)}}{\rm{H}}_n(\mathbb{Z}_n,\theta)\Bigr|
    \biggr)^2\right]^{\frac{1}{2}}\\
    &\qquad\quad
    \times\sup_{n\in\mathbb{N}}{\bf{E}}_{\mathbb{X}_n}\biggl[\bigl|\hat{u}_n^{(i)}\bigr|^4\biggr]^{\frac{1}{4}}\sup_{n\in\mathbb{N}}{\bf{E}}_{\mathbb{X}_n}\biggl[\bigl|\hat{u}_n^{(j)}\bigr|^8\biggr]^{\frac{1}{8}}\sup_{n\in\mathbb{N}}{\bf{E}}_{\mathbb{X}_n}\biggl[\bigl|\hat{u}_n^{(k)}\bigr|^8\biggr]^{\frac{1}{8}}\\
    &\longrightarrow 0
\end{align*}
as $n\longrightarrow\infty$, so that one has
\begin{align}
    {\bf{E}}_{\mathbb{X}_n}\biggl[{\bf{E}}_{\mathbb{Z}_n}\Bigl[{\rm{F}}_{3,n}\Bigr]
    \biggr]\longrightarrow 0 \label{F3}
\end{align}
as $n\longrightarrow\infty$. Consequently, we see from (\ref{F1}), (\ref{F2}) and (\ref{F3}) that
\begin{align*}
    {\rm{D}}_{3,n}=-{\bf{E}}_{\mathbb{X}_n}\biggl[{\bf{E}}_{\mathbb{Z}_n}\Bigl[{\rm{F}}_{1,n}+{\rm{F}}_{2,n}+{\rm{F}}_{3,n}\Bigr]\biggr]\longrightarrow \frac{q}{2}
\end{align*}
as $n\longrightarrow\infty$, which yields (\ref{D3}). Furthermore, we have
\begin{align}
    \rm{D}_{2,n}=0 \label{D2}
\end{align}
since $\mathbb{X}_n$ and $\mathbb{Z}_n$ have the same distribution. Therefore, it holds from (\ref{D1}), (\ref{D3}) and (\ref{D2}) that
\begin{align*}
    {\bf{E}}_{\mathbb{X}_n}\biggl[\log {\rm{L}}_{n}(\mathbb{X}_{n},\hat{\theta}_{n})-{\bf{E}}_{\mathbb{Z}_n}\Bigl[\log {\rm{L}}_{n}(\mathbb{Z}_n,\hat{\theta}_{n})\Bigr]\biggr]=q+o_p(1)
\end{align*}
as $n\longrightarrow\infty$.
\end{proof}
\begin{lemma}\label{mis}
Under {\bf{[A]}} and {\bf{[B2]}}, as $n\longrightarrow\infty$,
\begin{align*}
    \frac{1}{n}{\rm{H}}_{m,n}(\mathbb{X}_n,\hat{\theta}_{m,n})\stackrel{p}{\longrightarrow}
    {\rm{H}}_{m}(\bar{\theta}_{m}).
\end{align*}
\end{lemma}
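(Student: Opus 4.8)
The plan is to rewrite the normalized objective function so that all the randomness enters only through the realized covariation $\mathbb{Q}_{\mathbb{XX}}$, to prove $\mathbb{Q}_{\mathbb{XX}}\stackrel{p}{\longrightarrow}{\bf{\Sigma}}_0$, and then to combine this with the consistency of $\hat{\theta}_{m,n}$. Recalling that ${\rm{H}}_{m,n}(\mathbb{X}_n,\theta_m)=\log(2\pi h_n)^{\frac{np}{2}}{\rm{L}}_{m,n}(\mathbb{X}_n,\theta_m)$ and using $T=1$, so that $nh_n=1$ and $\sum_{i=1}^n(\Delta_i\mathbb{X})^{\otimes2}=\mathbb{Q}_{\mathbb{XX}}$, I would first establish the identity
\begin{align*}
    \frac{1}{n}{\rm{H}}_{m,n}(\mathbb{X}_n,\theta_m)=-\frac{1}{2}\log\det{\bf{\Sigma}}_m(\theta_m)-\frac{1}{2}\tr\Bigl({\bf{\Sigma}}_m(\theta_m)^{-1}\mathbb{Q}_{\mathbb{XX}}\Bigr),
\end{align*}
which is precisely ${\rm{H}}_m(\theta_m)$ with ${\bf{\Sigma}}_0$ replaced by $\mathbb{Q}_{\mathbb{XX}}$.

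The key step is to show $\mathbb{Q}_{\mathbb{XX}}\stackrel{p}{\longrightarrow}{\bf{\Sigma}}_0$; since this involves only the observed process, it holds irrespective of whether the model is correctly specified. I would use the same decomposition of $(\Delta_i\mathbb{X})^{\otimes2}$ into conditionally centred, drift and cross parts as in the proof of Lemma \ref{deltalemma}. The conditionally centred martingale $\sum_{i=1}^n\{(\mathbb{X}_{t_i^n}-{\bf{E}}[\mathbb{X}_{t_i^n}|\mathscr{F}^n_{i-1}])^{\otimes2}-{\bf{V}}[\mathbb{X}_{t_i^n}|\mathscr{F}^n_{i-1}]\}$ vanishes in $L^2$ by Burkholder's inequality together with (\ref{EXine}) and (\ref{Vine}); the cross and drift-squared terms are $o_p(1)$ by the Cauchy--Schwarz inequality and (\ref{EXine})--(\ref{EXine2}); and, by the conditional variance computation underlying Lemma \ref{EXVX} and (\ref{V2}), $\sum_{i=1}^n{\bf{V}}[\mathbb{X}_{t_i^n}|\mathscr{F}^n_{i-1}]=nh_n{\bf{\Sigma}}_0+o_p(1)={\bf{\Sigma}}_0+o_p(1)$. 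These are exactly the bounds already exploited for ${\bf{M}}^{\dagger\dagger}_n$ and ${\bf{R}}^{\dagger\dagger}_n$ in the proof of Lemma \ref{Ylemma}, so I expect this step to be routine.

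Since $\mathbb{Q}_{\mathbb{XX}}$ is independent of $\theta_m$ and $\sup_{\theta_m\in\Theta_m}|{\bf{\Sigma}}_m(\theta_m)^{-1}|<\infty$ by continuity of ${\bf{\Sigma}}_m(\cdot)^{-1}$ on the compact set $\Theta_m$, the above identity yields the uniform bound
\begin{align*}
    \sup_{\theta_m\in\Theta_m}\left|\frac{1}{n}{\rm{H}}_{m,n}(\mathbb{X}_n,\theta_m)-{\rm{H}}_m(\theta_m)\right|\leq\frac{1}{2}\sup_{\theta_m\in\Theta_m}\bigl|{\bf{\Sigma}}_m(\theta_m)^{-1}\bigr|\,\bigl|\mathbb{Q}_{\mathbb{XX}}-{\bf{\Sigma}}_0\bigr|\stackrel{p}{\longrightarrow}0.
\end{align*}
Finally, using the consistency $\hat{\theta}_{m,n}\stackrel{p}{\longrightarrow}\bar{\theta}_m$ implied by {\bf{[B2]}} and the continuity of ${\rm{H}}_m$, I would split
\begin{align*}
    \left|\frac{1}{n}{\rm{H}}_{m,n}(\mathbb{X}_n,\hat{\theta}_{m,n})-{\rm{H}}_m(\bar{\theta}_m)\right|\leq\sup_{\theta_m\in\Theta_m}\left|\frac{1}{n}{\rm{H}}_{m,n}(\mathbb{X}_n,\theta_m)-{\rm{H}}_m(\theta_m)\right|+\bigl|{\rm{H}}_m(\hat{\theta}_{m,n})-{\rm{H}}_m(\bar{\theta}_m)\bigr|,
\end{align*}
where the first term tends to $0$ in probability by the uniform convergence and the second by the continuous mapping theorem, giving the claim. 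The only genuinely technical point is the covariation limit of the second paragraph, which reuses Lemma \ref{EXVX} verbatim and hence poses no new obstacle.
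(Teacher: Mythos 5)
Your proof is correct, and its final assembly---uniform convergence of $n^{-1}{\rm{H}}_{m,n}$ to ${\rm{H}}_m$ over $\Theta_m$, consistency $\hat{\theta}_{m,n}\stackrel{p}{\longrightarrow}\bar{\theta}_m$ under {\bf{[B2]}}, continuity of ${\rm{H}}_m$, and the triangle-inequality split---coincides exactly with the paper's. Where you genuinely differ is in how the uniform convergence is obtained. The paper simply invokes Lemma 33 of Kusano and Uchida \cite{Kusano(2023)}, a uniform law of large numbers whose proof runs through the martingale-plus-remainder decomposition and Sobolev-inequality machinery of the kind displayed in Lemma \ref{Ylemma}. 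You instead exploit the identity
\begin{align*}
    \frac{1}{n}{\rm{H}}_{m,n}(\mathbb{X}_n,\theta_m)=-\frac{1}{2}\log\det{\bf{\Sigma}}_m(\theta_m)-\frac{1}{2}\tr\Bigl({\bf{\Sigma}}_m(\theta_m)^{-1}\mathbb{Q}_{\mathbb{XX}}\Bigr),
\end{align*}
valid because $T/h_n=n$, which shows that the data enter the normalized objective only through the model-free statistic $\mathbb{Q}_{\mathbb{XX}}$; uniformity in $\theta_m$ then costs nothing beyond $\sup_{\theta_m\in\Theta_m}|{\bf{\Sigma}}_m(\theta_m)^{-1}|<\infty$ on the compact $\Theta_m$, and everything reduces to the single convergence $\mathbb{Q}_{\mathbb{XX}}\stackrel{p}{\longrightarrow}{\bf{\Sigma}}_0$, which you prove from the bounds of Lemma \ref{EXVX} and (\ref{V2}) (orthogonality/Burkholder for the conditionally centred part, Cauchy-Schwarz for the cross and drift terms). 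This route is sound and more self-contained: it bypasses the Sobolev argument entirely, it makes transparent why misspecification of Model $m$ is irrelevant (the stochastic part of the objective never involves ${\bf{\Sigma}}_m$), and the limit of $\mathbb{Q}_{\mathbb{XX}}$ you need is in fact already available, at rate $\sqrt{n}$, from Theorem 1 of Kusano and Uchida \cite{Kusano(2023)}, which the paper itself uses in the Appendix proof of Lemma \ref{thetaprob1}. What the paper's citation-based argument buys is brevity and methodological uniformity across its lemmas; what yours buys is a proof readable without the companion paper. One point you should state explicitly rather than in passing: positive definiteness of ${\bf{\Sigma}}_m(\theta_m)$ for \emph{every} $\theta_m\in\Theta_m$ (hence finiteness of $\sup_{\theta_m\in\Theta_m}|{\bf{\Sigma}}_m(\theta_m)^{-1}|$) follows from the assumed positive definiteness of ${\bf{\Sigma}}^{\theta}_{\delta\delta,m}$ and ${\bf{\Sigma}}^{\theta}_{\varepsilon\varepsilon,m}$ together with the (implicitly assumed) continuity of $\theta_m\mapsto{\bf{\Sigma}}_m(\theta_m)$, since the remaining block structure contributes a positive semidefinite term.
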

\begin{proof}[Proof of Lemma \ref{mis}]
     In an analogous manner to the proof of Theorem 4 in Kusano and Uchida \cite{Kusano(JJSD)}, we can obtain the result.
     See also Appendix \ref{appendix lemma}.
\end{proof}
\begin{proof}[\textbf{Proof of Theorem 2}]
Fix $m^*\in\mathcal{M}$. From the definition of $\hat{m}_n$, one has
\begin{align}
    \begin{split}
    {\bf{P}}\Bigl(\hat{m}_{n}\in\mathcal{M}^{c}\Bigr)&\leq
    {\bf{P}}\left(\min_{m_1\in\mathcal{M}^{c}} {\rm{QAIC}}(\mathbb{X}_n,m_1)<\min_{m_2\in\mathcal{M}} {\rm{QAIC}}(\mathbb{X}_n,m_2)\right)\\
    &={\bf{P}}\left(\bigcup_{m_1\in\mathcal{M}^c}\Bigl\{ {\rm{QAIC}}(\mathbb{X}_n,m_1)<\min_{m_2\in\mathcal{M}}{\rm{QAIC}}(\mathbb{X}_n,m_2)\Bigr\}\right)\\
    &\leq \sum_{m_1\in\mathcal{M}^c}{\bf{P}}\left({\rm{QAIC}}(\mathbb{X}_n,m_1)<\min_{m_2\in\mathcal{M}}{\rm{QAIC}}(\mathbb{X}_n,m_2)\right)\\
    &= \sum_{m_1\in\mathcal{M}^c}{\bf{P}}
    \left(\bigcap_{m_2\in\mathcal{M}}\Bigl\{ {\rm{QAIC}}(\mathbb{X}_n,m_1)<{\rm{QAIC}}(\mathbb{X}_n,m_2)\Bigr\}\right)\\
    &\leq\sum_{m_1\in\mathcal{M}^c}
    {\bf{P}}\Bigl({\rm{QAIC}}(\mathbb{X}_n,m_1)<{\rm{QAIC}}(\mathbb{X}_n,m^*)\Bigr).
    \end{split}\label{AICine}
\end{align}
For all $m_1\in\mathcal{M}^c$, it follows from Lemma \ref{mis} that
\begin{align}
    \begin{split}
    &\quad\ \frac{1}{n}{\rm{QAIC}}(\mathbb{X}_n,m_1)-\frac{1}{n}{\rm{QAIC}}(\mathbb{X}_n,m^*)\\
    &=-\frac{2}{n}\log{\rm{L}}_{m_1,n}(\mathbb{X}_n,\hat{\theta}_{m_1,n})+\frac{2}{n}q_{m_1}
    +\frac{2}{n}\log{\rm{L}}_{m^*,n}(\mathbb{X}_n,\hat{\theta}_{m^*,n})-\frac{2}{n}q_{m^*}\\
    &=-\frac{2}{n}{\rm{H}}_{m_1,n}(\mathbb{X}_n,\hat{\theta}_{m_1,n})+\frac{2}{n}{\rm{H}}_{m^*,n}(\mathbb{X}_n,\hat{\theta}_{m^*,n})+\frac{2}{n}q_{m_1}-\frac{2}{n}q_{m^*}\\
    &\stackrel{p}{\longrightarrow}c_{m_1,m^*}
    \end{split}\label{AICprob}
\end{align}
as $n\longrightarrow\infty$, where
\begin{align*}
    c_{m_1,m^*}=-2{\rm{H}}_{m_1}(\bar{\theta}_{m_1})+2{\rm{H}}_{m^*}(\theta_{m^*,0}).
\end{align*}
Define the function ${\rm{G}}:\mathcal{M}_p^{++}\longrightarrow\mathbb{R}$:
\begin{align*}
    {\rm{G}}({\bf{\Sigma}})=-\frac{1}{2}\tr\bigl({\bf{\Sigma}}^{-1}{\bf{\Sigma}}_0\bigr)-\frac{1}{2}\log\det {\bf{\Sigma}}.
\end{align*}
Note that ${\rm{G}}({\bf{\Sigma}})$ has the unique maximum point at ${\bf{\Sigma}}={\bf{\Sigma}}_0$. Since
\begin{align*}
    {\bf{\Sigma}}_0={\bf{\Sigma}}_{m^*}(\theta_{m^*,0})\neq{\bf{\Sigma}}_{m_1}(\bar{\theta}_{m_1}),
\end{align*}
we obtain
\begin{align*}
    {\rm{H}}_{m_1}(\bar{\theta}_{m_1})={\rm{G}}\bigl({\bf{\Sigma}}_{m_1}(\bar{\theta}_{m_1})\bigr)<{\rm{G}}\bigl({\bf{\Sigma}}_{m^*}(\theta_{m^*,0})\bigr)={\rm{H}}_{m^*}(\theta_{m^*,0})
\end{align*}
for any $m_1\in\mathcal{M}^c$, which yields $c_{m_1,m^*}>0$. Consequently, it holds from (\ref{AICprob}) that for all $m_1\in\mathcal{M}^c$,
\begin{align*}
    0&\leq{\bf{P}}\Bigl({\rm{QAIC}}(\mathbb{X}_n,m_1)<{\rm{QAIC}}(\mathbb{X}_n,m^*)\Bigr)\\
    &={\bf{P}}\left(\frac{1}{n}{\rm{QAIC}}(\mathbb{X}_n,m_1)-\frac{1}{n}{\rm{QAIC}}(\mathbb{X}_n,m^*)-c_{m_1,m^*}<-c_{m_1,m^*}\right)\\
    &={\bf{P}}\left(c_{m_1,m^*}-\frac{1}{n}{\rm{QAIC}}(\mathbb{X}_n,m_1)+\frac{1}{n}{\rm{QAIC}}(\mathbb{X}_n,m^*)>c_{m_1,m^*}\right)\\
    &\leq{\bf{P}}\left(\Bigl|c_{m_1,m^*}-\frac{1}{n}{\rm{QAIC}}(\mathbb{X}_n,m_1)+\frac{1}{n}{\rm{QAIC}}(\mathbb{X}_n,m^*)\Bigr|>c_{m_1,m^*}\right)\\
    &={\bf{P}}\left(\Bigl|\frac{1}{n}{\rm{QAIC}}(\mathbb{X}_n,m_1)-\frac{1}{n}{\rm{QAIC}}(\mathbb{X}_n,m^*)-c_{m_1,m^*}\Bigr|>c_{m_1,m^*}\right)\longrightarrow 0
\end{align*}
as $n\longrightarrow \infty$, which implies
\begin{align}
    \sum_{m_1\in\mathcal{M}^c}
    {\bf{P}}\Bigl({\rm{QAIC}}(\mathbb{X}_n,m_1)<{\rm{QAIC}}(\mathbb{X}_n,m^*)\Bigr)\longrightarrow 0
    \label{sumconv}
\end{align}
as $n\longrightarrow \infty$. Therefore, we see from (\ref{AICine}) and (\ref{sumconv}) that
\begin{align*}
    {\bf{P}}\Bigl(\hat{m}_{n}\in\mathcal{M}^{c}\Bigr)\longrightarrow 0
\end{align*}
as $n\longrightarrow \infty$.
\end{proof}

\newpage
\section{Appendix}
\subsection{Proofs of Lemmas}\label{appendix lemma}
\begin{proof}[\textbf{Proof of Lemma \ref{thetaprob1}}]
Since
\begin{align*}
    \partial_{\theta^{(i)}}{\rm{H}}_n(\theta)&=-\frac{n}{2}\partial_{\theta^{(i)}}\log\det{\bf{\Sigma}}(\theta)-
    \frac{n}{2}\partial_{\theta^{(i)}}\tr\Bigl({\bf{\Sigma}}(\theta)^{-1}\mathbb{Q}_{\mathbb{XX}}\Bigr)\\
    &=-\frac{n}{2}\tr\biggl\{\Bigl({\bf{\Sigma}}(\theta)^{-1}\Bigr)\Bigl(\partial_{\theta^{(i)}}{\bf{\Sigma}}(\theta)\Bigr)\biggr\}\\
    &\qquad\qquad\qquad+\frac{n}{2}\tr\biggl\{\Bigl({\bf{\Sigma}}(\theta)^{-1}\Bigr)\Bigl(\partial_{\theta^{(i)}}{\bf{\Sigma}}(\theta)\Bigr)\Bigl({\bf{\Sigma}}(\theta)^{-1}\Bigr)\mathbb{Q}_{\mathbb{XX}}\biggr\}
\end{align*}
for $i=1,\cdots, q$, it is shown that
\begin{align*}
    &\quad\ \frac{1}{\sqrt{n}}\partial_{\theta^{(i)}}{\rm{H}}_n(\theta_0)\\
    &=\frac{\sqrt{n}}{2}\tr\biggl\{\Bigl({\bf{\Sigma}}(\theta_0)^{-1}\Bigr)\Bigl(\partial_{\theta^{(i)}}{\bf{\Sigma}}(\theta_0)\Bigr)\Bigl({\bf{\Sigma}}(\theta_0)^{-1}\Bigr)\mathbb{Q}_{\mathbb{XX}}\biggr\}\\
    &\qquad\qquad\qquad\qquad\qquad\qquad\qquad\qquad\quad-\frac{\sqrt{n}}{2}\tr\biggl\{\Bigl({\bf{\Sigma}}(\theta_0)^{-1}\Bigr)\Bigl(\partial_{\theta^{(i)}}{\bf{\Sigma}}(\theta_0)\Bigr)\biggr\}\\
    &=\frac{1}{2}\tr\biggl\{\Bigl({\bf{\Sigma}}(\theta_0)^{-1}\Bigr)\Bigl(\partial_{\theta^{(i)}}{\bf{\Sigma}}(\theta_0)\Bigr)\Bigl({\bf{\Sigma}}(\theta_0)^{-1}\Bigr)\sqrt{n}\Bigl(\mathbb{Q}_{\mathbb{XX}}-{\bf{\Sigma}}(\theta_0)\Bigr)\biggr\}\\
    &=\frac{1}{2}\Bigl(\vec{\partial_{\theta^{(i)}}{\bf{\Sigma}}(\theta_0)}\Bigr)^{\top}\Bigl({\bf{\Sigma}}(\theta_0)^{-1}\otimes{\bf{\Sigma}}(\theta_0)^{-1}\Bigr)\sqrt{n}\Bigl(\vec \mathbb{Q}_{\mathbb{XX}}-\vec {\bf{\Sigma}}(\theta_0)\Bigr)\\
    &=\frac{1}{2}\Bigl(\vech{\partial_{\theta^{(i)}}{\bf{\Sigma}}(\theta_0)}\Bigr)^{\top}\mathbb{D}_p^{\top}\Bigl({\bf{\Sigma}}(\theta_0)^{-1}\otimes{\bf{\Sigma}}(\theta_0)^{-1}\Bigr)\mathbb{D}_p\sqrt{n}\Bigl(\vech \mathbb{Q}_{\mathbb{XX}}-\vech {\bf{\Sigma}}(\theta_0)\Bigr)\\
    &=\Bigl(\partial_{\theta^{(i)}}\vech{{\bf{\Sigma}}(\theta_0)}\Bigr)^{\top}{\bf{W}}(\theta_0)^{-1}\sqrt{n}\Bigl(\vech \mathbb{Q}_{\mathbb{XX}}-\vech {\bf{\Sigma}}(\theta_0)\Bigr).
\end{align*}
Thus, we see from Theorem 1 in Kusano and Uchida \cite{Kusano(2023)} that
\begin{align*}
    \frac{1}{\sqrt{n}}\partial_{\theta}{\rm{H}}_n(\theta_0)&=\Delta_0^{\top}{\bf{W}}(\theta_0)^{-1}\sqrt{n}\Bigl(\vech \mathbb{Q}_{\mathbb{XX}}-\vech {\bf{\Sigma}}(\theta_0)\Bigr)\\
    &\qquad\qquad\qquad\stackrel{d}{\longrightarrow}N_{q}\Bigl(0,\Delta_0^{\top}{\bf{W}}(\theta_0)^{-1}\Delta_0\Bigr)\sim {\bf{I}}(\theta_0)^{\frac{1}{2}}\zeta.
\end{align*}
Note that
\begin{align*}
    \frac{1}{n}\partial_{\theta^{(i)}}\partial_{\theta^{(j)}}{\rm{H}}_n(\theta)&=\frac{1}{2}\tr\biggl\{\Bigl({\bf{\Sigma}}(\theta)^{-1}\Bigr)\Bigl(\partial_{\theta^{(i)}}{\bf{\Sigma}}(\theta)\Bigr)\Bigl({\bf{\Sigma}}(\theta)^{-1}\Bigr)\Bigl(\partial_{\theta^{(j)}}{\bf{\Sigma}}(\theta)\Bigr)\biggr\}\\
    &\quad-\frac{1}{2}\tr\biggl\{\Bigl({\bf{\Sigma}}(\theta)^{-1}\Bigr)\Bigl(\partial_{\theta^{(i)}}\partial_{\theta^{(j)}}{\bf{\Sigma}}(\theta)\Bigr)\biggr\}\\
    &\quad-\frac{1}{2}\tr\biggl\{\Bigl({\bf{\Sigma}}(\theta)^{-1}\Bigr)\Bigl(\partial_{\theta^{(i)}}{\bf{\Sigma}}(\theta)\Bigr)\Bigl({\bf{\Sigma}}(\theta)^{-1}\Bigr)\Bigl(\partial_{\theta^{(j)}}{\bf{\Sigma}}(\theta)\Bigr)\Bigl({\bf{\Sigma}}(\theta)^{-1}\Bigr)\mathbb{Q}_{\mathbb{XX}}\biggr\}\\
    &\quad+\frac{1}{2}\tr\biggl\{\Bigl({\bf{\Sigma}}(\theta)^{-1}\Bigr)\Bigl(\partial_{\theta^{(i)}}\partial_{\theta^{(j)}}{\bf{\Sigma}}(\theta)\Bigr)\Bigl({\bf{\Sigma}}(\theta)^{-1}\Bigr)\mathbb{Q}_{\mathbb{XX}}\biggr\}\\
    &\quad-\frac{1}{2}\tr\biggl\{\Bigl({\bf{\Sigma}}(\theta)^{-1}\Bigr)\Bigl(\partial_{\theta^{(j)}}{\bf{\Sigma}}(\theta)\Bigr)\Bigl({\bf{\Sigma}}(\theta)^{-1}\Bigr)\Bigl(\partial_{\theta^{(i)}}{\bf{\Sigma}}(\theta)\Bigr)\Bigl({\bf{\Sigma}}(\theta)^{-1}\Bigr)\mathbb{Q}_{\mathbb{XX}}\biggr\}
\end{align*}
for $i,j=1,\cdots,q$. It follows from Theorem 1 in Kusano and Uchida \cite{Kusano(2023)} and Slutsky Theorem that
\begin{align*}
    \frac{1}{n}\partial_{\theta^{(i)}}\partial_{\theta^{(j)}}{\rm{H}}_n(\theta_0)
    &\stackrel{p}{\longrightarrow}-\frac{1}{2}
    \tr\biggl\{\Bigl({\bf{\Sigma}}(\theta_0)^{-1}\Bigr)\Bigl(\partial_{\theta^{(i)}}{\bf{\Sigma}}(\theta_0)\Bigr)\Bigl({\bf{\Sigma}}(\theta_0)^{-1}\Bigr)\Bigl(\partial_{\theta^{(j)}}{\bf{\Sigma}}(\theta_0)\Bigr)\biggr\}\\
    &\quad=-\frac{1}{2}\Bigl(\vec\partial_{\theta^{(i)}}{\bf{\Sigma}}(\theta_0)\Bigr)^{\top}\Bigl({\bf{\Sigma}}(\theta_0)^{-1}\otimes{\bf{\Sigma}}(\theta_0)^{-1}\Bigr)\Bigl(\vec\partial_{\theta^{(j)}}{\bf{\Sigma}}(\theta_0)\Bigr)\\
    &\quad=-\frac{1}{2}\Bigl(\vech\partial_{\theta^{(i)}}{\bf{\Sigma}}(\theta_0)\Bigr)^{\top}\mathbb{D}_{p}^{\top}\Bigl({\bf{\Sigma}}(\theta_0)^{-1}\otimes{\bf{\Sigma}}(\theta_0)^{-1}\Bigr)\mathbb{D}_{p}\Big(\vech\partial_{\theta^{(j)}}{\bf{\Sigma}}(\theta_0)\Bigr)\\
    &\quad=-\Bigl(\partial_{\theta^{(i)}}\vech{\bf{\Sigma}}(\theta_0)\Bigr)^{\top}{\bf{W}}(\theta_0)^{-1}\Bigl(\partial_{\theta^{(j)}}\vech{\bf{\Sigma}}(\theta_0)\Bigr)\\
    &\quad=-\bigl(\Delta_0^{\top}{\bf{W}}(\theta_0)^{-1}\Delta_0\bigr)_{ij}
\end{align*}
as $n\longrightarrow\infty$, so that we obtain
\begin{align*}
    \frac{1}{n}\partial^2_{\theta}{\rm{H}}_n(\theta_0)\stackrel{p}{\longrightarrow}-{\bf{I}}(\theta_0)
\end{align*}
as $n\longrightarrow\infty$.
\end{proof}
\begin{proof}[\textbf{Proof of Lemma \ref{thetaprob2}}]
{[\bf{B1}]} deduces
\begin{align*}
    {\rm{Y}}(\theta)=0\Longrightarrow \theta=\theta_0.
\end{align*}
For all $\varepsilon>0$, there exists $\delta>0$ such that
\begin{align*}
    |\hat{\theta}_n-\theta_0|>\varepsilon\Longrightarrow {\rm{Y}}(\theta_0)-{\rm{Y}}(\hat{\theta}_n)>\delta.
\end{align*}
In an analogous manner to Lemma 33 in Kusano and Uchida \cite{Kusano(2023)}, we obtain
\begin{align*}
    \sup_{\theta\in\Theta}\Bigl|{\rm{Y}}_n(\theta;\theta_0)-{\rm{Y}}(\theta)\Bigr|\stackrel{p}{\longrightarrow}0.
\end{align*}
Since it holds from the definition of $\hat{\theta}_n$ that
\begin{align*}
    {\rm{Y}}_n(\theta_0;\theta_0)\leq {\rm{Y}}_n(\hat{\theta}_n;\theta_0),
\end{align*}
we see
\begin{align*}
    {\bf{P}}\Bigl(|\hat{\theta}_n-\theta_0|>\varepsilon\Bigr)&\leq{\bf{P}}\biggl({\rm{Y}}(\theta_0)-{\rm{Y}}(\hat{\theta}_n)>\delta\biggr)\\
    &\leq {\bf{P}}\biggl({\rm{Y}}(\theta_0)-{\rm{Y}}_n(\theta_0;\theta_0)>\frac{\delta}{3}\biggr)\\
    &\quad+{\bf{P}}\biggl({\rm{Y}}_n(\theta_0;\theta_0)-{\rm{Y}}_n(\hat{\theta}_n;\theta_0)>\frac{\delta}{3}\biggr)\\
    &\quad+{\bf{P}}\biggl({\rm{Y}}_n(\hat{\theta}_n;\theta_0)-{\rm{Y}}(\hat{\theta}_n)>\frac{\delta}{3}\biggr)\\
    &\leq 2{\bf{P}}\Biggl(\sup_{\theta\in\Theta}\Bigl|{\rm{Y}}_n(\theta;\theta_0)-{\rm{Y}}(\theta)\Bigr|>\frac{\delta}{3}\Biggr)\longrightarrow 0
\end{align*}
as $n\longrightarrow\infty$, which yields
\begin{align}
    \hat{\theta}_n\stackrel{p}{\longrightarrow}\theta_0 \label{thetacons}
\end{align}
as $n\longrightarrow\infty$. Using the Taylor expansion, we have
\begin{align*}
    \frac{1}{\sqrt{n}}\partial_{\theta}{\rm{H}}_n(\hat{\theta}_n)
    &=\frac{1}{\sqrt{n}}\partial_{\theta}{\rm{H}}_n(\theta_0)+
    \biggl(\frac{1}{n}\int_{0}^{1}\partial^2_{\theta}{\rm{H}}_n(\tilde{\theta}_{n,\lambda})d\lambda\biggr)\sqrt{n}(\hat{\theta}_n-\theta_0),
\end{align*}
where $\tilde{\theta}_{n,\lambda}=\theta_0+\lambda(\hat{\theta}_n-\theta_0)$. 
Note that
\begin{align*}
    \frac{1}{\sqrt{n}}\partial_{\theta}{\rm{H}}_n(\hat{\theta}_n)=0
\end{align*}
on $A_n$ and ${\bf{P}}(A_n)\longrightarrow 1$ as $n\longrightarrow\infty$, where
\begin{align*}
    A_n=\Bigl\{\hat{\theta}_n\in{\rm{Int}}\Theta\Bigr\}.
\end{align*}
In a similar manner to Theorem 2 in Kusano and Uchida \cite{Kusano(JJSD)}, it holds from Lemma \ref{thetaprob2} and (\ref{thetacons}) that
\begin{align*}
    \frac{1}{n}\int_{0}^{1}\partial^2_{\theta}{\rm{H}}_n(\tilde{\theta}_{n,\lambda})d\lambda\stackrel{p}{\longrightarrow}-{\bf{I}}(\theta_0)
\end{align*}
as $n\longrightarrow\infty$. Therefore, we see from Lemma \ref{thetaprob2} that
\begin{align*}
    \sqrt{n}(\hat{\theta}_n-\theta_0)\stackrel{d}{\longrightarrow}{\bf{I}}(\theta_0)^{-\frac{1}{2}}\zeta
\end{align*}
as $n\longrightarrow\infty$.
\end{proof}
\begin{proof}[\textbf{Proof of Lemma \ref{mis}}]
In a similar way to Lemma 33 in Kusano and Uchida \cite{Kusano(2023)}, it is shown that
\begin{align*}
    \sup_{\theta_m\in\Theta_m}\biggl|\frac{1}{n}{\rm{H}}_{m,n}(\theta_m)-{\rm{H}}_m(\theta_m)\biggr|\stackrel{p}{\longrightarrow}0.
\end{align*}
Since ${\rm{H}}_m(\theta_m)$ is continuous in $\theta_m\in\Theta_m$, it holds from the continuous mapping theorem and Lemma 36 in Kusano and Uchida \cite{Kusano(2023)} that
\begin{align*}
    {\rm{H}}_m(\hat{\theta}_{m,n})\stackrel{p}{\longrightarrow}{\rm{H}}_m(\bar{\theta}_m)
\end{align*}
as $n\longrightarrow\infty$. Therefore, we see
\begin{align*}
    \biggl|\frac{1}{n}{\rm{H}}_{m,n}(\hat{\theta}_{m,n})-{\rm{H}}_m(\bar{\theta}_m)\biggr|&\leq 
    \biggl|\frac{1}{n}{\rm{H}}_{m,n}(\hat{\theta}_{m,n})-{\rm{H}}_m(\hat{\theta}_{m,n})\biggr|+
    \biggl|{\rm{H}}_m(\hat{\theta}_{m,n})-{\rm{H}}_m(\bar{\theta}_m)\biggr|\\
    &\leq\sup_{\theta\in\Theta}\biggl|\frac{1}{n}{\rm{H}}_{m,n}(\theta_m)-{\rm{H}}_m(\theta_m)\biggr|+
    \biggl|{\rm{H}}_m(\hat{\theta}_{m,n})-{\rm{H}}_m(\bar{\theta}_m)\biggr|\stackrel{p}{\longrightarrow}0
\end{align*}
as $n\longrightarrow\infty$, which yields
\begin{align*}
    \frac{1}{n}{\rm{H}}_{m,n}(\hat{\theta}_{m,n})\stackrel{p}{\longrightarrow}{\rm{H}}_m(\bar{\theta}_m)
\end{align*}
as $n\longrightarrow\infty$.
\end{proof}
\subsection{Proof of (\ref{Y1iden})}\label{identify1}
\begin{proof}
In an analogous manner to Appendix 6.2 in Kusano and Uchida \cite{Kusano(2023)}, it is shown that
\begin{align}
    {\bf{\Sigma}}(\theta)={\bf{\Sigma}}(\theta_0)\Longrightarrow\theta=\theta_0.
    \label{iden}
\end{align}
For ${\bf{\Sigma}}\in\mathcal{M}_p^{++}$, we define
\begin{align*}
    {\rm{G}}_2({\bf{\Sigma}})=\log\det {\bf{\Sigma}}-\log\det {\bf{\Sigma}}(\theta_0)+\tr\Bigl({\bf{\Sigma}}^{-1}{\bf{\Sigma}}(\theta_0)\Bigr)-p.
\end{align*}
Note that ${\rm{G}}_2({\bf{\Sigma}})$ has the unique minimum point at ${\bf{\Sigma}}={\bf{\Sigma}}(\theta_0)$. Since
\begin{align*}
    {\rm{Y}}(\theta)
    &=-\frac{1}{2}\Bigl\{\log\det{\bf{\Sigma}}(\theta)-\log\det{\bf{\Sigma}}(\theta_0)+\tr\Bigl({\bf{\Sigma}}(\theta)^{-1}{\bf{\Sigma}}(\theta_0)\Bigr)-p\Bigr\}\\
    &=-\frac{1}{2}{\rm{G}}_2\bigl({\bf{\Sigma}}(\theta)\bigr),
\end{align*}
it holds from (\ref{iden}) that ${\rm{Y}}(\theta)$ has the unique maximum point at $\theta=\theta_0$, which yields
\begin{align}
    \sup_{|\theta-\theta_0|>v}{\rm{Y}}(\theta)<{\rm{Y}}(\theta_0)=0 \label{supY}
\end{align}
for all $v>0$. The Taylor expansion of ${\rm{Y}}(\theta)$ around $\theta=\theta_0$ is given by
\begin{align}
\begin{split}
    {\rm{Y}}(\theta)&={\rm{Y}}(\theta_{0})+\partial_{\theta}{\rm{Y}}(\theta_{0})^{\top}(\theta-\theta_{0})\\
    &\qquad\qquad\qquad+(\theta-\theta_{0})^{\top}\biggl(\int_{0}^{1}(1-\lambda)\partial^2_{\theta}{\rm{Y}}(\theta_{\lambda})d\lambda\biggr)(\theta-\theta_{0})\\
    &=\frac{1}{2}(\theta-\theta_{0})^{\top}\partial^2_{\theta}{\rm{Y}}(\theta_{0})(\theta-\theta_{0})\\
    &\qquad\qquad\qquad+(\theta-\theta_{0})^{\top}\biggl(\int_{0}^{1}(1-\lambda)\partial^2_{\theta}{\rm{Y}}(\theta_{\lambda})d\lambda-\frac{1}{2}\partial^2_{\theta}{\rm{Y}}(\theta_{0})\biggr)(\theta-\theta_{0}), \label{Ytaylor}
\end{split}
\end{align}
where $\theta_{\lambda}=\theta_0+\lambda(\theta-\theta_0)$. In a similar way to Theorem 2 in Kusano and Uchida \cite{Kusano(JJSD)}, it is shown that
\begin{align*}
    \int_{0}^{1}(1-\lambda)\partial^2_{\theta}{\rm{Y}}(\theta_{\lambda})d\lambda\longrightarrow\frac{1}{2}\partial^2_{\theta}{\rm{Y}}(\theta_{0})
\end{align*}
as $\theta\longrightarrow\theta_0$, which deduces 
\begin{align*}
    (\theta-\theta_{0})^{\top}\biggl(\int_{0}^{1}(1-\lambda)\partial^2_{\theta}{\rm{Y}}(\theta_{\lambda})d\lambda-\frac{1}{2}\partial^2_{\theta}{\rm{Y}}(\theta_{0})\biggr)(\theta-\theta_{0})\longrightarrow 0
\end{align*}
as $\theta\longrightarrow\theta_0$. Hence, for all $\varepsilon>0$, there exists $\delta>0$ such that
\begin{align*}
    |\theta-\theta_0|\leq\delta\Longrightarrow\biggl|(\theta-\theta_{0})^{\top}\biggl(\int_{0}^{1}(1-\lambda)\partial^2_{\theta}{\rm{Y}}(\theta_{\lambda})d\lambda-\frac{1}{2}\partial^2_{\theta}{\rm{Y}}(\theta_{0})\biggr)(\theta-\theta_{0})\biggr|<\varepsilon,
\end{align*}
so that we see from (\ref{Ytaylor}) that
\begin{align*}
    {\rm{Y}}(\theta)<\frac{1}{2}(\theta-\theta_{0})^{\top}\partial^2_{\theta}{\rm{Y}}(\theta_{0})(\theta-\theta_{0})+\varepsilon
\end{align*}
for $\theta\in B_{\delta}(\theta_0)$, where
\begin{align*}
    B_{\delta}(\theta_0)=\Bigl\{\theta\in\Theta: |\theta-\theta_0|\leq \delta\Bigr\}.
\end{align*}
Note that it holds from the proof of Lemma \ref{thetaprob1} that ${\bf{I}}(\theta_0)=-\partial^2_{\theta}{\rm{Y}}(\theta_{0})$.
Since $\varepsilon>0$ is arbitrary, one has
\begin{align*}
    {\rm{Y}}(\theta)\leq-\frac{1}{2}(\theta-\theta_{0})^{\top}{\bf{I}}(\theta_0)(\theta-\theta_{0})
\end{align*}
as $\varepsilon\downarrow 0$ for $\theta\in B_{\delta}(\theta_0)$. Recalling that ${\bf{I}}(\theta_0)$ is a positive definite matrix, we have
\begin{align*}
    \lambda_{min}|\theta-\theta_{0}|^2\leq(\theta-\theta_{0})^{\top}{\bf{I}}(\theta_0)(\theta-\theta_{0})\leq\lambda_{max}|\theta-\theta_{0}|^2,
\end{align*}
where $\lambda_{min}>0$ and $\lambda_{max}>0$ are the minimum and maximum eigenvalues of ${\bf{I}}(\theta_0)$. There exists $C_1>0$ such that
\begin{align}
    {\rm{Y}}(\theta)\leq -C_1|\theta-\theta_{0}|^2 \label{YC1}
\end{align}
for $\theta\in B_{\delta}(\theta_0)$. Let
\begin{align*}
    {\rm{Diam}}\Theta=\sup_{\theta_1,\theta_2\in\Theta}|\theta_1-\theta_2|>0.
\end{align*}
Since
\begin{align*}
    \frac{1}{{\rm{Diam}}\Theta}|\theta-\theta_0|\leq 1,
\end{align*}
we see from (\ref{supY}) that
\begin{align*}
\begin{split}
    {\rm{Y}}(\theta)&\leq \sup_{|\theta-\theta_0|>\delta}{\rm{Y}}(\theta)\\
    &\leq -\Biggl(-\sup_{|\theta-\theta_0|>\delta}{\rm{Y}}(\theta)\Biggr)\frac{1}{({\rm{Diam}}\Theta)^2}|\theta-\theta_0|^2
\end{split}
\end{align*}
for $\theta\in B_{\delta}(\theta_0)^c$, so that there exists $C_2>0$ such that
\begin{align}
    {\rm{Y}}(\theta)\leq -C_2|\theta-\theta_{0}|^2 \label{YC2}
\end{align}
for $\theta\in B_{\delta}(\theta_0)^c$. Therefore, it follows from (\ref{YC1}) and (\ref{YC2}) that
\begin{align*}
    {\rm{Y}}(\theta)\leq -C|\theta-\theta_{0}|^2
\end{align*}
for $\theta\in\Theta$, where $C=\min (C_1,C_2)$.
\end{proof}
\subsection{Ergodic case}\label{ergodic} \ \\
{\setlength{\abovedisplayskip}{12pt}
\setlength{\belowdisplayskip}{12pt}
In this section, we consider the ergodic case. The following assumptions are made.
\begin{enumerate}
    \item[{\bf{[C]}}]
    \begin{enumerate}
        \item The diffusion process $\xi_t$ is ergodic with its invariant measure $\pi_{\xi,0}$. For any $\pi_{\xi,0}$-integrable function $f_1$, it holds that
        \begin{align*}
        \frac{1}{T}\int_{0}^{T}{f_1(\xi_t)dt}\overset{p}{\longrightarrow}\int f_1(x)\pi_{\xi,0}(dx)
        \end{align*}
        as $T\longrightarrow\infty$. 
        \item The diffusion process $\delta_t$ is ergodic with its invariant measure $\pi_{\delta,0}$. For any $\pi_{\delta,0}$-integrable function $f_2$, it holds that
        \begin{align*}
        \frac{1}{T}\int_{0}^{T}{f_2(\delta_t)dt}\overset{p}{\longrightarrow}\int f_2(x)\pi_{\delta,0}(dx)
        \end{align*}
        as $T\longrightarrow\infty$. 
        \item The diffusion process $\varepsilon_t$ is ergodic with its invariant measure $\pi_{\varepsilon,0}$. For any $\pi_{\varepsilon,0}$-integrable function $f_3$, it holds that
        \begin{align*}
        \frac{1}{T}\int_{0}^{T}{f_3(\varepsilon_t)dt}\overset{p}{\longrightarrow}\int f_3(x)\pi_{\varepsilon,0}(dx)
        \end{align*}
        as $T\longrightarrow\infty$. 
        \item The diffusion process $\zeta_t$ is ergodic with its invariant measure $\pi_{\zeta,0}$. For any $\pi_{\zeta,0}$-integrable function $f_4$, it holds that
        \begin{align*}
        \frac{1}{T}\int_{0}^{T}{f_4(\zeta_t)dt}\overset{p}{\longrightarrow}\int f_4(x)\pi_{\zeta,0}(dx)
        \end{align*}
        as $T\longrightarrow\infty$. 
    \end{enumerate}
\end{enumerate}
In the ergodic case, we have the following results similar to the non-ergodic case.
\begin{theorem}\label{theorem1er}
Let $m\in\{1,\cdots,M\}$. Under {\bf{[A]}}, {\bf{[B1]}} and {\bf{[C]}}, as $h_n\longrightarrow 0$, $nh_n\longrightarrow\infty$ and $nh_n^2\longrightarrow 0$,
\begin{align*}
    {\bf{E}}_{\mathbb{X}_n}\biggl[\log {\rm{L}}_{m,n}\bigl(\mathbb{X}_{n},\hat{\theta}_{m,n}({\mathbb{X}_{n}})\bigr)-{\bf{E}}_{\mathbb{Z}_n}\Bigl[\log {\rm{L}}_{m,n}\bigl(\mathbb{Z}_n,\hat{\theta}_{m,n}({\mathbb{X}_{n}})\bigr)\Bigr]\biggr]=q_m+o_p(1).
\end{align*}
\end{theorem}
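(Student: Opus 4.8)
The plan is to reproduce the proof of Theorem \ref{theorem1} line by line, replacing each auxiliary lemma by its ergodic-regime counterpart. Writing $\theta_0=\theta_{m,0}$ and $\hat\theta_n=\hat\theta_{m,n}$, I would begin from the same three-term decomposition
\[
{\bf{E}}_{\mathbb{X}_n}\bigl[\cdots\bigr]={\rm{D}}_{1,n}+{\rm{D}}_{2,n}+{\rm{D}}_{3,n},
\]
with ${\rm{D}}_{1,n}$, ${\rm{D}}_{2,n}$, ${\rm{D}}_{3,n}$ defined exactly as in the proof of Theorem \ref{theorem1}. The term ${\rm{D}}_{2,n}$ vanishes identically because $\mathbb{X}_n$ and $\mathbb{Z}_n$ share the same law, and the goal is to show ${\rm{D}}_{1,n}\to q/2$ and ${\rm{D}}_{3,n}\to q/2$, so that the sum converges to $q=q_m$.

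First I would verify the ergodic-regime analogues of the moment estimates, Lemmas \ref{EXVX}, \ref{deltalemma}, \ref{gammalemma}, \ref{Ylemma} and \ref{H3lemma}. These carry over with essentially unchanged proofs: the increment and conditional-variance bounds in Lemma \ref{EXVX} rely only on the uniform-in-time moment bounds $\sup_t {\bf{E}}[|\xi_{0,t}|^\ell]<\infty$ (and the analogues for $\delta,\varepsilon,\zeta$) supplied by {\bf{[A]}}, which remain valid along the growing horizon $T=nh_n\to\infty$. The martingale/Burkholder arguments in Lemmas \ref{deltalemma}--\ref{H3lemma} are insensitive to the sampling regime, and the remainder terms there were controlled precisely by $nh_n^2\to0$; in the present setting this is now an explicit hypothesis (rather than the identity $nh_n^2=n^{-1}$ used for fixed $T$), so the same estimates ${\bf{E}}[|{\bf{R}}_n^{(j)}|^L]\le C_L(nh_n^2)^{L/2}$ still tend to zero.

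The substantive step is to re-establish Lemmas \ref{thetaprob1} and \ref{thetaprob2}, namely the score central limit theorem $\tfrac{1}{\sqrt n}\partial_\theta{\rm{H}}_n(\theta_0)\stackrel{d}{\to}{\bf{I}}(\theta_0)^{1/2}\zeta$, the Hessian convergence $\tfrac1n\partial_\theta^2{\rm{H}}_n(\theta_0)\stackrel{p}{\to}-{\bf{I}}(\theta_0)$, and the consistency and asymptotic normality of $\hat\theta_n$. Here assumption {\bf{[C]}} plays the role that the fixed-interval limit theorem (Theorem 1 of Kusano and Uchida \cite{Kusano(2023)}) played for Theorem \ref{theorem1}: ergodicity provides $\mathbb{Q}_{\mathbb{XX}}\stackrel{p}{\to}{\bf{\Sigma}}_0$ together with the quadratic-variation central limit theorem $\sqrt n\,(\vech\mathbb{Q}_{\mathbb{XX}}-\vech{\bf{\Sigma}}(\theta_0))\stackrel{d}{\to}N(0,{\bf{W}}(\theta_0))$ in the regime $h_n\to0$, $nh_n\to\infty$, $nh_n^2\to0$. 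Given these, the two displays in Lemma \ref{thetaprob1} follow by feeding $\sqrt n(\vech\mathbb{Q}_{\mathbb{XX}}-\vech{\bf{\Sigma}}(\theta_0))$ through the exact algebraic identities already derived in the appendix proof of Lemma \ref{thetaprob1}, and Lemma \ref{thetaprob2} follows as in the non-ergodic case. The polynomial-type large-deviation estimate of Lemma \ref{Zine} and the uniform integrability of $\hat u_n$ in Lemma \ref{moment} are then obtained from the moment bounds above via Yoshida \cite{Yoshida(2011)}, exactly as before.

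With the full set of ergodic-regime lemmas in hand, the Taylor-expansion computations for ${\rm{D}}_{1,n}$ and ${\rm{D}}_{3,n}$ transcribe verbatim from the proof of Theorem \ref{theorem1}. Expanding ${\rm{H}}_n(\mathbb{X}_n,\hat\theta_n)$ to third order around $\theta_0$, the first- and second-order terms contribute $q$ and $-q/2$ respectively, through ${\bf{E}}_{\mathbb{X}_n}[\hat u_n^\top{\bf{I}}(\theta_0)\hat u_n]\to q$ and ${\bf{E}}_{\mathbb{X}_n}[\hat u_n\hat u_n^\top]\to{\bf{I}}(\theta_0)^{-1}$, while the third-order remainder vanishes by Lemmas \ref{H3lemma} and \ref{moment}; hence ${\rm{D}}_{1,n}\to q/2$. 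The analogous expansion of ${\rm{H}}_n(\mathbb{Z}_n,\hat\theta_n)$, exploiting the independence of $\mathbb{Z}_n$ from $\hat\theta_n$ (a function of $\mathbb{X}_n$), gives ${\rm{D}}_{3,n}\to q/2$ as well, so that ${\rm{D}}_{1,n}+{\rm{D}}_{2,n}+{\rm{D}}_{3,n}\to q/2+0+q/2=q$. I expect the main obstacle to be the quadratic-variation central limit theorem underlying the ergodic Lemma \ref{thetaprob1}: one must check that the drift contributions, which now accumulate over the unbounded interval $[0,T]$, remain asymptotically negligible at rate $\sqrt n$ — this is where {\bf{[C]}} and the condition $nh_n^2\to0$ are essential — and that the martingale part obeys a CLT whose limiting covariance is the constant ${\bf{W}}(\theta_0)$ determined by the state-independent diffusion coefficient.
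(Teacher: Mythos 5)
Your proposal is correct and takes essentially the same route as the paper: the paper's own proof of this theorem is a single remark that, under $h_n\rightarrow 0$, $nh_n\rightarrow\infty$ and $nh_n^2\rightarrow 0$, the results follow in the same way as the proofs of Theorems \ref{theorem1}--\ref{misstheorem}. Your elaboration of exactly where {\bf{[C]}} and the rate conditions enter --- the law of large numbers and central limit theorem for $\mathbb{Q}_{\mathbb{XX}}$ replacing Theorem 1 of Kusano and Uchida \cite{Kusano(2023)}, and the remainder bounds of order $C_L(nh_n^2)^{L/2}$ now controlled by the hypothesis $nh_n^2\rightarrow 0$ rather than the fixed-$T$ identity $nh_n^2=n^{-1}$ --- is precisely the content the paper leaves implicit.
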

\begin{theorem}
Under {\bf{[A]}}, {\bf{[B2]}} and {\bf{[C]}}, as $h_n\longrightarrow 0$, $nh_n\longrightarrow\infty$ and $nh_n^2\longrightarrow 0$,
\begin{align*}
    {\bf{P}}\Bigl(\hat{m}_{n}\in\mathcal{M}^{c}\Bigr)\longrightarrow 0. 
\end{align*}    
\end{theorem}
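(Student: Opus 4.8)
The plan is to follow the proof of Theorem \ref{misstheorem} line by line at the level of the deterministic comparison, and to replace its two probabilistic inputs by ergodic counterparts valid in the regime $h_n\to 0$, $nh_n\to\infty$, $nh_n^2\to 0$. Fix $m^*\in\mathcal{M}$. The union-bound reduction in (\ref{AICine}) is purely combinatorial and carries over unchanged, so that
\begin{align*}
    {\bf{P}}\Bigl(\hat{m}_{n}\in\mathcal{M}^{c}\Bigr)\leq\sum_{m_1\in\mathcal{M}^c}{\bf{P}}\Bigl({\rm{QAIC}}(\mathbb{X}_n,m_1)<{\rm{QAIC}}(\mathbb{X}_n,m^*)\Bigr),
\end{align*}
and it suffices to show that each summand tends to $0$. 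Exactly as in (\ref{AICprob}), this will follow once I establish the ergodic analogue of Lemma \ref{mis}, namely
\begin{align*}
    \frac{1}{n}{\rm{H}}_{m,n}(\mathbb{X}_n,\hat{\theta}_{m,n})\stackrel{p}{\longrightarrow}{\rm{H}}_{m}(\bar{\theta}_m)
\end{align*}
under {\bf{[A]}}, {\bf{[B2]}} and {\bf{[C]}}, together with the consistency $\hat{\theta}_{m,n}\stackrel{p}{\longrightarrow}\bar{\theta}_m$ in the ergodic sampling scheme.

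The core probabilistic step I would carry out first is the law of large numbers for the empirical covariation: I would show $\mathbb{Q}_{\mathbb{XX}}\stackrel{p}{\longrightarrow}{\bf{\Sigma}}_0$ using {\bf{[C]}} in place of the fixed-$T$ quadratic-variation argument underlying Theorem~1 in Kusano and Uchida \cite{Kusano(2023)}. Because the diffusion coefficients are constant, the instantaneous covariance ${\bf{\Sigma}}_0$ is nonrandom, and the ergodic averages in {\bf{[C]}} together with the condition $nh_n^2\to0$ (which kills the drift contribution) give the required convergence. From this I would deduce the uniform convergence $\sup_{\theta_m\in\Theta_m}\bigl|\tfrac{1}{n}{\rm{H}}_{m,n}(\theta_m)-{\rm{H}}_m(\theta_m)\bigr|\stackrel{p}{\longrightarrow}0$ exactly as in the proof of Lemma \ref{mis}; combined with {\bf{[B2]}} (which identifies $\bar{\theta}_m$ as the unique maximizer and hence gives $\hat{\theta}_{m,n}\stackrel{p}{\to}\bar{\theta}_m$) and the continuous mapping theorem, this yields the desired convergence of $\tfrac{1}{n}{\rm{H}}_{m,n}(\mathbb{X}_n,\hat{\theta}_{m,n})$.

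With this input in hand, the deterministic heart of the argument is identical to Theorem \ref{misstheorem}. Writing $c_{m_1,m^*}=-2{\rm{H}}_{m_1}(\bar{\theta}_{m_1})+2{\rm{H}}_{m^*}(\theta_{m^*,0})$, I would invoke that the map ${\rm{G}}({\bf{\Sigma}})=-\tfrac12\tr({\bf{\Sigma}}^{-1}{\bf{\Sigma}}_0)-\tfrac12\log\det{\bf{\Sigma}}$ attains its unique maximum at ${\bf{\Sigma}}={\bf{\Sigma}}_0$; since ${\bf{\Sigma}}_0={\bf{\Sigma}}_{m^*}(\theta_{m^*,0})\neq{\bf{\Sigma}}_{m_1}(\bar{\theta}_{m_1})$ for every $m_1\in\mathcal{M}^c$, this forces $c_{m_1,m^*}>0$. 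The Chebyshev-type estimate concluding the proof of Theorem \ref{misstheorem} then shows that ${\bf{P}}({\rm{QAIC}}(\mathbb{X}_n,m_1)<{\rm{QAIC}}(\mathbb{X}_n,m^*))\to0$, and summing over the finitely many $m_1\in\mathcal{M}^c$ completes the argument.

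The hard part will be the ergodic law of large numbers and the accompanying uniform-convergence and moment estimates under the weaker scaling $nh_n\to\infty$, $nh_n^2\to0$, rather than the fixed-$T$ setting $h_n=T/n$ used throughout Lemmas \ref{EXVX}–\ref{H3lemma}. Those lemmas repeatedly exploit $h_n=T/n$ through factors such as $nh_n^2=n^{-1}$; in the ergodic regime the analogous remainder terms are of order $(nh_n^2)^{1/2}\to0$ instead, so I would need to recheck each martingale/Burkholder bound and each ${\rm{R}}$-type remainder to confirm that replacing $n^{-1}$ by $nh_n^2$ leaves the estimates negligible. Once these scaling adjustments are verified, together with the application of {\bf{[C]}} to pass from discrete sums to integrals against the invariant measures $\pi_{\xi,0},\pi_{\delta,0},\pi_{\varepsilon,0},\pi_{\zeta,0}$, the remainder of the proof is a routine transcription of the non-ergodic case.
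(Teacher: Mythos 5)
Your proposal is correct and takes essentially the same route as the paper, whose entire proof of this theorem is the single remark that under $h_n\rightarrow 0$, $nh_n\rightarrow\infty$, $nh_n^2\rightarrow 0$ the arguments for Theorems \ref{theorem1} and \ref{misstheorem} go through unchanged. Your write-up simply makes explicit what that remark leaves implicit — the union bound and the positivity of $c_{m_1,m^*}$ carry over verbatim, and the only genuinely new ingredient is the ergodic law of large numbers $\mathbb{Q}_{\mathbb{XX}}\stackrel{p}{\longrightarrow}{\bf{\Sigma}}_0$ (with remainders now of order $(nh_n^2)^{1/2}$ rather than $n^{-1/2}$) feeding into the ergodic analogue of Lemma \ref{mis} — which matches the paper's intent exactly.
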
}
\begin{proof}[\textbf{Proofs of Theorems 3-4}]
Since $h_n\longrightarrow 0$ and $nh_n^2\longrightarrow\infty$, we can prove the results in the same way as the proofs of Theorems 1-2.
\end{proof}

\begin{thebibliography}{10}
\fontsize{10pt}{16pt}\selectfont
\bibitem{Adams(2003)} Adams, R. A. and Fournier, J. J. (2003). \textit{Sobolev spaces.} \textit{Elsevier}.
\bibitem{Ait(2020)}
A{\"i}t-Sahalia, Y., Kalnina, I. and Xiu, D. (2020). High-frequency factor models and regressions. \textit{Journal of Econometrics}, \textbf{216(1)}, 86-105.
\bibitem{Ait(2017)} A{\"i}t-Sahalia, Y.and Xiu, D. (2017). Using principal component analysis to estimate a high dimensional factor model with high-frequency data. \textit{Journal of Econometrics}, \textbf{201(2)}, 384-399.
\bibitem{Akaike(1987)}
Akaike, H. (1987). Factor analysis and AIC. \textit{Psychometrika}, \textbf{52}, 317-332.
\bibitem{Eguchi(2023)} Eguchi, S. and Masuda, H. (2023). Gaussian quasi-information criteria for ergodic L{\'e}vy driven SDE. \textit{Annals of the Institute of Statistical Mathematics}, 1-47.
\bibitem{Everitt(1984)} Everitt, B. (1984)
\textit{An introduction to latent variable models},  \textit{Springer Science \& Business Media}
\bibitem{Genon(1993)}  Genon-Catalot, V. and Jacod, J. (1993). On the estimation of the diffusion coefficient for multidimensional diffusion processes. \textit{Annales de l'Institut Henri Poincar{\'e} (B) Probabilit{\'e}s et Statistiques,}\textbf{29}, 119-151. 
\bibitem{Harville(1998)} Harville, D. A. (1998). \textit{Matrix algebra from a statistician's perspective.} \textit{Taylor \& Francis}.
\bibitem{Huang AIC(2017)} Huang, P. H. (2017). Asymptotics of AIC, BIC, and RMSEA for model selection in structural equation modeling. \textit{Psychometrika}, \textbf{82(2)}, 407-426.
\bibitem{Joreskog(1970)} J{\"o}reskog, K. G. (1970). A general method for analysis of covariance structures. \textit{Biometrika}, \textbf{57(2)}, 239-251.
\bibitem{kessler(1997)} Kessler, M. (1997). Estimation of an ergodic diffusion from discrete observations. \textit{Scandinavian Journal of Statistics}, \textbf{24(2)}, 211-229.
\bibitem{Kusano(2024)} Kusano, S., and Uchida, M. (2023). Statistical inference in factor analysis for diffusion processes from discrete observations. \textit{Journal of Statistical Planning and Inference}, (Version of Record).
DOI: https://doi.org/10.1016/j.jspi.2023.07.009
\bibitem{Kusano(JJSD)} Kusano, S., and Uchida, M. (2023). Sparse inference of structural equation modeling with latent variables for diffusion processes. \textit{Japanese Journal of Statistics and Data Science}, (Version of Record).
DOI: https://doi.org/10.1007/s42081-023-00230-1
\bibitem{Kusano(2023)} Kusano, S., and Uchida, M. (2023). Structural equation modeling with latent variables for diffusion processes and its application to sparse estimation. arXiv preprint arXiv:2305.02655v2.
\bibitem{Mueller(1999)} Mueller, R. O. (1999). \textit{Basic principles of structural equation modeling: An introduction to LISREL and EQS.} \textit{Springer Science \& Business Media.}
\bibitem{Uchida(2010)} 
Uchida, M. (2010). Contrast-based information criterion for ergodic diffusion processes from discrete observations. {\it{Annals of the Institute of Statistical Mathematics}}, \textbf{62}, 161-187.
\bibitem{Uchi-Yoshi(2012)} 
Uchida, M. and Yoshida, N. (2012). 
Adaptive estimation of an ergodic diffusion process based on sampled data. 
\textit{Stochastic Processes and their Applications}, 
\textbf{122(8)}, 2885-2924. 
\bibitem{Yoshida(1992)} 
Yoshida, N. (1992). 
Estimation for diffusion processes from discrete observation.
{\it Journal of Multivariate Analysis}, {\bf 41}, 220--242. 
\bibitem{Yoshida(2011)} 
Yoshida, N. (2011). Polynomial type large deviation inequalities and quasi-likelihood analysis for stochastic differential equations. {\it Annals of the Institute of Statistical Mathematics}, \textbf{63(3)}, 431-479.
\end{thebibliography}
\end{document}